\documentclass[11pt,a4paper,reqno]{amsart}
\usepackage[english]{babel}
\usepackage[T1]{fontenc}
\usepackage{verbatim}
\usepackage{palatino}
\usepackage{amsmath}
\usepackage{mathabx}
\usepackage{amssymb}
\usepackage{amsthm}
\usepackage{amsfonts}
\usepackage{graphicx}
\usepackage{esint}
\usepackage{color}
\usepackage{mathtools}
\usepackage{overpic}

\usepackage[colorlinks = true, citecolor = black]{hyperref}
\author{Amir Algom}
\address{Department of Mathematics, University of Haifa at Oranim, Tivon 36006, Israel}
\email{\href{mailto:amir.algom@math.haifa.ac.il}{amir.algom@math.haifa.ac.il}}

\title{{ergodic  $\times p$}-invariant  measures on $\mathbb{T}^2$ with no dimension dropping projections}
\date{\today}
\subjclass[2020]{28A80 (primary); 37A35, 37A50 (secondary)}
\keywords{Projections, invariant measures, Hausdorff dimension, entropy, rigidity}
\thanks{A.A. is supported by the Israel Science Foundation (Grant No.~392/25), NSF--BSF Grant No.~2024692, and Grant No.~2022034 from the United States--Israel Binational Science Foundation (BSF), Jerusalem, Israel.}

\newcommand{\R}{\mathbb{R}}

\newcommand{\N}{\mathbb{N}}

\def\Barint_#1{\mathchoice
          {\mathop{\vrule width 6pt height 3 pt depth -2.5pt
                  \kern -8pt \intop}\nolimits_{#1}}%
          {\mathop{\vrule width 5pt height 3 pt depth -2.6pt
                  \kern -6pt \intop}\nolimits_{#1}}%
          {\mathop{\vrule width 5pt height 3 pt depth -2.6pt
                  \kern -6pt \intop}\nolimits_{#1}}%
          {\mathop{\vrule width 5pt height 3 pt depth -2.6pt
                  \kern -6pt \intop}\nolimits_{#1}}}

\numberwithin{equation}{section}

\theoremstyle{plain}
\newtheorem{thm}{Theorem}
\numberwithin{thm}{section}
\newtheorem*{"thm"}{"Theorem"}

\newtheorem{lemma}[thm]{Lemma}

\newtheorem{cor}[thm]{Corollary}
\newtheorem{proposition}[thm]{Proposition}
\newtheorem{"proposition"}[thm]{"Proposition"}
\newtheorem{"lemma"}[thm]{"Lemma"}
\newtheorem{question}{Question}

\theoremstyle{definition}

\newtheorem{definition}[thm]{Definition}

\theoremstyle{remark}

\newcommand{\nref}[1]{(\hyperref[#1]{#1})}

\DeclareMathSymbol{\intop}  {\mathop}{mathx}{"B3}

\usepackage{enumitem}
\usepackage[nameinlink,capitalise]{cleveref}
\usepackage{microtype}
\usepackage{mathrsfs}

\newcommand{\T}{\mathbb T}

\newcommand{\dimH}{\dim_{\mathrm H}}
\newcommand{\Ent}{\mathrm H}
\newcommand{\I}{\mathrm I}

\begin{document}

\begin{abstract}
Fix an integer $p\geq 2$ and  $0<s<1$. We construct an ergodic
$\times p$-invariant measure $\mu$ on $\mathbb T^2$ of dimension $s$, such that
every line projection preserves dimension, including  when the corresponding projected IFS has exact overlaps. In fact, our measure assigns  mass $O(w^s)$ to every planar tube of width $w$. The result is sharp at the endpoint $s=1$ as shown by Py\"or\"al\"a, Shmerkin, Suomala and Wu (2025). In contrast, we show that  every  quasi-Bernoulli $T_p$-invariant measure with positive dimension admits a dimension-dropping
 projection.
\end{abstract}
\maketitle

\section{Introduction}
\subsection{Background}

A fundamental result in geometric measure theory is the Marstrand--Mattila projection theorem: for $0\leq k\leq d$, let $G_{d,k}$ denote the Grassmannian of $k$-dimensional linear subspaces of $\mathbb{R}^d$, equipped with its natural $\mathrm{SO}(d)$-invariant probability measure. For $V\in G_{d,k}$, let
$\pi_V:\mathbb{R}^d\to V$
denote the orthogonal projection onto $V$. Now, let  $\nu\in\mathcal{P}(\mathbb{R}^d)$ be a Borel probability measure. The Marstrand--Mattila Theorem asserts that for every $0\leq k\leq d$, for almost every $V\in G_{d,k}$,
\begin{equation}\label{Eq Marstrand}
\dim(\pi_V\nu)=\min \{\dim\nu,k\},
\text{ where }
\dim\nu:=\inf \{\dim_H A:\nu(A)>0 \}.
\end{equation}
See \cite[Theorems 9.7 and 9.9]{mattila1999geometry}. Note that 
$\dim(\pi_V\nu)\leq \min \{\dim\nu,k\}$
since  $\pi_V$ is Lipschitz. Thus, we call a projection $\pi_V$ \emph{dimension preserving} if equality holds in \eqref{Eq Marstrand}, and \emph{dimension dropping} otherwise. For dynamically defined measures, Furstenberg (morally) advocated the principle that exceptional projections to \eqref{Eq Marstrand} could be predicted  by the algebraic structure of the underlying dynamical system. 

In this paper we provide non-trivial instances in which this heuristic fails: We construct dynamically defined measures with no dimension-dropping projections (at all), even though  the underlying dynamics singles out many directions in which one might naturally expect dimension drop to occur.

\subsection{Main results} \label{section: main results}

For an integer $p\ge 2$, denote the times $ p$ map of $\mathbb{T}:=\mathbb{R}/\mathbb{Z}$ via
$$T_p(x)=px\mod 1.$$
By a standard abuse of notation, we also denote by $T_p$ the corresponding map on $\mathbb{T}^2$, that is,
$T_p(x,y)= \left( T_p(x),\, T_p(y) \right) :\T^2\to\T^2.$
Which interpretation is meant will be clear from context. Define  the coding map   $\Pi: \mathcal A_p ^\mathbb{N} \to\T^2$ by 
\begin{equation} \label{eq: full coding}
\Pi(\omega)=\sum_{n=1} ^\infty \frac{\omega_n}{p^n}, \text{ where } \mathcal A_p:=  \{0,\ldots,p-1\}^2.
\end{equation}

Let $\mu \in \mathcal{P}(\mathbb{T}^2)$ be an ergodic $T_p$-invariant measure, with positive dimension $\dim \mu =s >0$. We want to understand for which linear maps $L:\R^2\to\R$ we have $\dimH L\mu< \min\{1,\dimH\mu\}.$ We will focus on the subcritical case $0<s<1$, as the case $s=1$ was studied in \cite{Aleksi2025pablo}, as
we discuss below. Since a linear map on $\R^2$ does not in general descend to $\T^2$, we fix the standard fundamental domain $[0,1)^2\subset\R^2$ and regard our measures on $\T^2$ as  measures on $[0,1)^2$ when taking Euclidean projections. Thus, for a linear map $L:\R^2\to\R$, the notation $L\mu$ refers to the push-forward of this representative of $\mu$ under $L$.

To illustrate our discussion, suppose $\mu$ is a fully supported IID measure. That is, 
 $\mu = (\Pi)_* \mathbf{p}^\mathbb{N}$, where $\mathbf{p}$ is a fully supported (strictly positive) probability vector on $\mathcal A_p$. We furthermore assume that
$0<\frac{H(\mathbf p)}{\log p}=s<1,$
so that $\dimH\mu=s$; here $H(\cdot)$ is the usual Shannon entropy; this formula can be found in e.g. \cite[Chapter 2]{bishop2013fractal}. 
In this setting it is easy to find many dimension-dropping projections.
Indeed, let $a\neq b\in\mathcal A_p$, and let
$V\in G_{2,1}$ be the line perpendicular to $a-b$. Then
$\pi_V(a)=\pi_V(b).$
Thus, let $X$ be an $\mathcal A_p$-valued random variable with law
$\mathbf p$, and let
$Y=\pi_V(X).$
Since  $Y$ is obtained from $X$ by collapsing (at least)
$a$ and $b$, 
$H(Y)<H(X)=H(\mathbf p).$
On the other hand,  $\pi_V\mu=(\Pi_e)_*\mathbf p^{\N}$, where $e\in S^1$ spans $V$ and
\begin{equation} \label{eq: proj coding}
\Pi_e(\omega)=\sum_{n=1}^{\infty}\frac{e\cdot\omega_n}{p^n}.
\end{equation}
Therefore  standard considerations  give
$\dim\pi_V\mu
\leq
\frac{H(Y)}{\log p}
<
\frac{H( X)}{\log p}
=s.$ In fact,  one can similarly show that for every $c\in\mathbb Z^2\setminus\{0\}$ with $\gcd(c_1,c_2)=1$, the rational projection $x\mapsto x\cdot c$
 will be dimension dropping for $\mu$. More generally, Theorem~\ref{cor:quasi-bernoulli-drop} below shows that the milder  \emph{quasi-Bernoulli} property  ensures the existence of a dimension-dropping rational projection. Thus, in this setting, the heuristic described above is borne out: the evident algebraic resonances do indeed produce dimension drop.

Our main result shows that in fact, there are  ergodic $T_p$-invariant measures for which there is no dimension dropping projection.  For a line $\ell\subset\mathbb R^2$ and $w>0$, let
$T(\ell,w)
:=
\left\{
x\in\mathbb R^2:
\operatorname{dist}(x,\ell)<\frac{w}{2}
\right\},$
and call $T(\ell,w)$ a tube of width $w$; for a tube $T$ let $w(T)$ denote its width.

\begin{thm}\label{thm:main}
Let $p\geq 2$ and $0<s<1$. There exist an ergodic $T_p$-invariant
probability measure $\mu\in\mathcal P(\mathbb T^2)$ and a constant
$C\geq 1$ such that
$$\dim\mu=s \text{ and }
\mu(T)\leq C w(T)^s \text{ for every Euclidean tube } T\subset\mathbb R^2.$$ 
In particular, for every $V\in G_{2,1}$, 
$\dim\pi_V\mu=s.$
\end{thm}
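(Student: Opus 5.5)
Since $\mu(T)\le Cw^s$ for every tube $T$ of width $w$ gives $\pi_V\mu(I)\le C|I|^s$ for every interval $I\subset V$, the mass distribution principle yields $\dim\pi_V\mu\ge s$. Together with the trivial bound $\dim\pi_V\mu\le\dim\mu=s$, this gives the "in particular" clause. So the real task is to construct $\mu$ with the tube estimate and with $\dim\mu=s$.

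\textbf{Construction.} I will not use a Bernoulli or quasi-Bernoulli measure: by the discussion above, and by Theorem~\ref{cor:quasi-bernoulli-drop}, such measures always drop dimension under some rational projection. Instead I would build $\mu$ as the image under $\Pi$ of a shift-invariant ergodic measure on $\mathcal A_p^{\N}$ with long-range structure, using \emph{rotating directions}. The plan is to make the digit sequence piecewise "one-dimensional":
\begin{enumerate}
\item Choose a sparse increasing sequence of scales $N_1<N_2<\cdots$ with $N_{k+1}/N_k\to\infty$.
\item In block $k$, the digits of $x$ at levels in $[N_k,N_{k+1})$ lie on a fixed rational line direction $\theta_k$. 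Concretely, $\pi_{\theta_k^\perp}$ of the block is frozen, up to boundedly many choices per level.
\item Within the direction $\theta_k$, a proportion $s$ of levels carry uniformly random digits.
\item The directions $\theta_k$ run through a dense sequence in $S^1$, visiting every rational direction infinitely often.
\end{enumerate}
Such a measure is not invariant as it stands. To obtain $T_p$-invariance and ergodicity, I would use the standard trick: define a random process on $\mathcal A_p^{\mathbb Z}$ with a random phase, or equivalently take a weak-$*$ limit of averages $\frac1n\sum_{j<n} T_p^j\nu$. Then I would check that the tube bounds survive, since they are uniform over translates in scale. Ergodicity comes from writing $\mu$ as a factor of a mixing-type skew product (a Toeplitz-like or substitution-like structure), or simply by passing to an ergodic component that retains the uniform tube bound. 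If the bound is established for almost every ergodic component with a uniform constant, passing to a component works.

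\textbf{Tube estimate.} Fix a tube $T$ of width $w=p^{-n}$ in direction $e$. Decompose the levels $1,\dots,n$ into blocks, and bound $\mu(T)$ by the product over levels of the conditional probability that the digit at that level is compatible with $T$. A level with random digits along $\theta_k$ costs a factor about $p^{-1}$ unless $\theta_k$ is nearly parallel to $e$. The goal is to choose the random-digit density and the blocks so that across levels $1..n$ the total gain is at least $p^{-sn}$ up to a constant, \emph{uniformly in $e$}. Two requirements follow:
\begin{itemize}
\item At most one block can be nearly parallel to $e$ at the relevant scale. Degenerate alignment must only occur on a set of levels of length $o(n)$, or be compensated by the directional spread inside each block.
\item A safer variant: inside each block, randomize along two directions alternately with a direction-dependent weighting, so that each level's randomness is two-dimensional but of total entropy $s\log p$.
\end{itemize}
The dimension claim $\dim\mu=s$ then follows from the local entropy $s\log p$ per level, via the Shannon--McMillan--Breiman theorem for ergodic measures.

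\textbf{Main obstacle.} The hardest step is the uniform constant $C$, i.e.\ an $O(w^s)$ bound rather than $w^{s-\varepsilon}$, simultaneously for all directions, including directions exactly parallel to some $\theta_k$ where the projected IFS has exact overlaps. My first attempt would be to dispense with rotating blocks and make each level's digit distribution "Salem-like": the digit pair at each level is distributed so that every line projection of the level-$m$ cylinder measure is $s$-Frostman at scale $p^{-m}$ with a uniform constant. I would construct these as a limit of measures whose digit correlations are tuned with a Frostman-type combinatorial lemma (random constructions of sets with no large collinear subsets). I would then verify that the resulting constants multiply out to a bounded, not exponentially growing, quantity across scales. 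This last verification is where I expect most of the difficulty.
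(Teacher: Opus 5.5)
\textbf{There is a genuine gap.} Your deduction of the ``in particular'' clause, and of $\dim\mu\geq s$, from the tube bound is fine. The problem is that the construction meant to produce the tube bound fails, and the step that would actually carry the proof is left open.

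The rotating-direction design defeats itself. Suppose that on the levels $[N_k,N_{k+1})$ the component along $\theta_k^{\perp}$ is essentially frozen. Then a tube of width $\asymp p^{-N_{k+1}}$ parallel to $\theta_k$ contains essentially an entire level-$N_k$ cylinder. So the heaviest such tube has mass $\gtrsim p^{-2N_k}$, while the target is $\lesssim p^{-sN_{k+1}}$. Since $N_{k+1}/N_k\to\infty$, no constant $C$ can work. This violates your own requirement that aligned levels be rare, because block $k$ occupies a $1-o(1)$ proportion of the levels up to $N_{k+1}$. Absent compensation, an $O(w^s)$ bound (rather than $w^{s-o(1)}$) would even need $O(1)$ aligned levels, not $o(n)$.

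The remaining steps do not repair this:
\begin{enumerate}
\item Your claim that the bounds survive Ces\`aro averaging presupposes bounds for the shifted measures that are uniform in the shift, and this design violates them.
\item Passing to an ergodic component is not a remedy. Neither a uniform tube bound nor the exact dimension passes automatically to components, and exceptional directions may vary from component to component. The paper avoids this by making the subshift uniquely ergodic.
\item A per-level product bound cannot give a uniform constant even in principle. After rescaling, the undetermined tail leaves $e\cdot\omega_r$ free in an interval of length about $1+\sqrt2$. For $p=2$ this interval can contain all four digits, so a single level yields no decay at all. In general each level loses a constant factor, and these losses compound into a loss in the exponent.
\end{enumerate}

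Your fallback in the last paragraph is essentially a restatement of the target estimate \eqref{eq:main-occupancy-target}. The whole difficulty is to make finite-scale non-concentration persist through a stationary ergodic limit with a constant that does not grow with the scale. Random constructions do provide such finite-scale non-concentration (compare Lemma~\ref{lem:initial-library}). The persistence through the limit is exactly the verification you defer.

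The missing idea is amortization over long blocks, paid for by an exponent surplus. The paper proceeds as follows:
\begin{enumerate}
\item It works with libraries of words of length $N$ all of whose internal windows are non-concentrated in every direction.
\item Balanced concatenations of these words are randomly pruned (Theorem~\ref{thm:pruning}). The constant lost per old block then costs only $\log_p(\Gamma A)/N$ in the exponent, and this is paid from the gaps $t_k-t_{k+1}$ of a sequence of exponents $t_k\downarrow s$.
\item Balancing gives exact frequencies of old words, hence unique ergodicity. The cardinalities $\lfloor p^{t_kN_k}\rfloor$ give the entropy upper bound.
\item Windows at intermediate scales $N_{k-1}<m\leq N_k$ are handled by the one-boundary product estimate. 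Polynomial prefactors such as $N_{k-1}^4$ are absorbed by $p^{-(t_k-s)N_{k-1}}$.
\item The super-exponential growth of the $N_k$ makes the passage from $Q_{k+1,m}$ to $\nu$ cost only $O(m/N_{k+1})$.
\end{enumerate}
This is what yields a genuinely uniform $C$. A scheme that pays a fixed constant per level, or per block of bounded length, can at best give $\mu(T)\lesssim w^{s-\varepsilon}$.
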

Here, as above, we regard $\mu$ as a measure on the standard fundamental
domain $[0,1)^2$ when considering Euclidean tubes.
The restriction to $s<1$ in Theorem~\ref{thm:main} is sharp. Indeed, Py\"or\"al\"a,
Shmerkin, Suomala, and Wu \cite{Aleksi2025pablo} proved that an ergodic
$T_p$-invariant measure of dimension $1$ whose every non-principal
projection has dimension $1$ must be one of the product measures
$\nu\times\operatorname{Leb}_{\mathbb T}$ or
$\operatorname{Leb}_{\mathbb T}\times\nu,$
$\text{ where }
\dim\nu=0.$

The tube estimate in Theorem~\ref{thm:main} is  related to the work of Orponen \cite{Orponen2015}: for every
$0<s<1$, he constructed a compact set $K\subset\mathbb R^2$ with
$0<\mathcal H^s(K)<\infty$
such that
$\mathcal H^s(K\cap T)\lesssim C w(T)^s$
for every tube $T$. Here, $\mathcal H^s$ is the $s$-dimensional Hausdorff measure. So, 
$\frac{\mathcal H^s|_K}{\mathcal H^s(K)}$ is an $s$-dimensional probability measure
satisfying the same optimal tube bound as in Theorem~\ref{thm:main}. We note that Orponen’s result settled a problem posed by Carbery \cite{Carbery2009}. For related work on tube occupancy and tube-nullity, see \cite{CarberySoriaVargas2007, Chen2016, ShmerkinSuomala2015,Aleksi2025pablo}.

Theorem \ref{thm:main} is also related to the theory of self-similar IFS:
recall that a self-similar IFS (Iterated Function System) on $\mathbb R^d$ is a finite
family of contracting similarities
\[
\Phi=\{f_i(x)=r_iO_i x+t_i\}_{i\in\Lambda},
\, |\Lambda|<\infty,\,
0<|r_i|<1,\, O_i\in O(d).
\]
Given a probability vector
$(q_i)_{i\in\Lambda}$, the corresponding self-similar measure is the
unique probability measure $\theta$ satisfying
$\theta=\sum_{i\in\Lambda}q_i(f_i)_*\theta.$
More generally, if $\nu$ is any shift-invariant ergodic measure on
$\Lambda^{\mathbb N}$, its image under the corresponding coding map is a
 measure supported on the corresponding self-similar set. See \cite{BaranySimonSolomyak2023} for a general introduction to this topic.

The measures appearing in this paper are naturally of this form.
Indeed,
consider the planar self-similar IFS
$\Phi
=
\left\{
x\mapsto\frac{x+d}{p}:
d\in\mathcal A_p
\right\},$
whose coding map is $\Pi$ from \eqref{eq: full coding}. If $V\in G_{2,1}$ and $e\in S^1$ spans
$V$, then
$\pi_V(x)=(e\cdot x)e.$
After identifying $V$ isometrically with $\mathbb R$ via
$te\mapsto t$, the projection $\pi_V$ becomes the scalar map
$P_e(x):=e\cdot x.$
Accordingly, projecting $\Phi$ onto $V$ gives the one-dimensional
 self-similar IFS
$\Phi_e
=
\left\{
x\mapsto\frac{x+e\cdot d}{p}:
d\in\mathcal A_p
\right\}.$
Moreover, if $\mu=\Pi_*\nu$, then
$P_e\mu=(P_e\circ\Pi)_*\nu$ from \eqref{eq: proj coding}
is precisely the image of the same symbolic measure $\nu$ under the
coding map of $\Phi_e$. Thus $\pi_V\mu$ and $P_e\mu$ differ only by
the above isometric identification and, in particular, have the same
dimension.

Jordan and Rapaport \cite{jordan2019dimension} proved that, if
$\Phi_e$ satisfies exponential separation, then
$\dim P_e\mu
=
\min\left\{
1,\, \dim \mu
\right\}.$
Here exponential separation means that there exist $c>0$ and infinitely
many $n$ such that
$|\varphi_u(0)-\varphi_v(0)|\geq c^n$
for every pair of distinct words $u,v\in\mathcal A_p^n$, where
$\varphi_u,\varphi_v$ denote the corresponding level-$n$ maps of
$\Phi_e$. By an argument of Shmerkin and Solomyak,  recorded in
\cite{hochman2014self}, if $e$ has irrational slope, then $\Phi_e$
satisfies exponential separation. Hence, for the measure $\mu$ constructed
in Theorem~\ref{thm:main}, 
Jordan--Rapaport already implies
$\dim P_e\mu=\dim \mu$
for every irrational direction $e$. We also note that all measures ($\mu$ and its projections) involved in Theorem \ref{thm:main} are exact dimensional; this follows directly from the work of Feng and Hu \cite{feng2009dimension}.

The remaining interesting directions are therefore the rational ones. 
This discussion leads naturally to the following question.

\begin{question}\label{question:prescribed-exceptional}
Let $p\geq2$, let $0<s<1$, and let
$S\subseteq\mathbb P^1(\mathbb Q)$
be arbitrary, where $\mathbb P^1(\mathbb Q)$ denotes the rational
projective line. Does there exist an ergodic $T_p$-invariant probability
measure $\mu$ on $\mathbb T^2$ such that
\[
\dim\mu=s
\text{ and,   }
\dim P_e\mu<s
\quad\Longleftrightarrow\quad
[e]\in S?
\]
\end{question}
Theorem~\ref{thm:main} shows that $S=\varnothing$ is possible. At the
opposite extreme, a fully supported IID measure  will have
$S=\mathbb P^1(\mathbb Q).$ The question asks how
much flexibility remains between these two extremes.

Theorem \ref{thm:main} also naturally suggests the question, of which  properties do force the existence of dimension-dropping  projections. To this end, recall that a $\sigma$-invariant probability measure $\nu$ on a $\mathcal{A}_p ^\mathbb{N}$ is called \emph{quasi-Bernoulli} if there exists $C\geq1$ such
that
\begin{equation}\label{eq:quasiBernoulli}
C^{-1}\nu([U])\nu([V])
\leq
\nu([UV])
\leq
C\nu([U])\nu([V]), \text{ whenever } U,V\in \bigcup_{n} \mathcal{A}_p ^n \text{ have } \nu([UV])>0.
\end{equation}
Here $[U]$ denotes the
cylinder determined by the word $U$.
 IID measures
are quasi-Bernoulli with $C=1$ on their support; more generally, every stationary
Markov measure is quasi-Bernoulli on its support. Likewise,  every Gibbs measure of a
H\"older potential on a topologically mixing subshift of finite type is quasi-Bernoulli.

Our main result in this direction is the following:

\begin{thm}\label{cor:quasi-bernoulli-drop}
Let $\nu \in \mathcal{P}(\mathcal{A}_p ^\mathbb{N})$ be an ergodic quasi-Bernoulli $\sigma$-invariant measure with positive entropy, and let $\mu=\Pi_* \nu \in \mathcal{P}(\mathbb{T}^2)$.  Then
there exists 
$a\in\mathbb Z^2\setminus\{0\}$ such that
$\dim(P_a \mu)<\dim\mu.$
\end{thm}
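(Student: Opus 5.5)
The plan is to find two distinct finite words $U\neq V$ that the integer projection $P_a$ maps to the same point (an exact overlap), and then use quasi-Bernoulli comparability to show that merging $U$ and $V$ costs a positive proportion of entropy in every window.

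\emph{Step 1: reduce to entropy.} Since $\Pi$ is injective off a $\nu$-null set (boundary points of $p$-adic squares form a countable union of lines, null for a positive-entropy ergodic measure), Feng--Hu \cite{feng2009dimension} gives $\dim\mu=h(\nu)/\log p$. For the projection I will use the standard upper bound
$$\dim P_a\mu\le\liminf_{N\to\infty}\frac{H(Z_N)}{N\log p},\qquad Z_N(\omega):=\sum_{i=1}^N (a\cdot\omega_i)\,p^{N-i}\in\mathbb Z .$$
This holds because $P_a\Pi(\omega)=p^{-N}Z_N(\omega)+O_a(p^{-N})$, so $H(Z_N)$ controls the entropy of $P_a\mu$ at scale $p^{-N}$ up to $O(1)$. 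It therefore suffices to find $a\in\mathbb Z^2\setminus\{0\}$ and $c>0$ with $H(Z_N)\le H(\omega_1,\dots,\omega_N)-cN$ for all large $N$; since $H(\omega_1,\dots,\omega_N)=Nh(\nu)+o(N)$, this gives a strict drop.

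\emph{Step 2: choose $a$ so that $P_a$ has an exact overlap.} Positive entropy gives two distinct words $U\neq V\in\mathcal A_p^n$ of positive measure; in the simplest case these are two distinct letters, with $n=1$. Set $D:=\sum_{i=1}^n(U_i-V_i)p^{n-i}\in\mathbb Z^2$. Then $D\neq0$ by uniqueness of base-$p$ expansions (each coordinate has digits in $\{0,\dots,p-1\}$). Take $a:=(-D_2,D_1)/\gcd$, a nonzero integer vector orthogonal to $D$. Then replacing an aligned occurrence of $U$ by $V$ anywhere in $\omega_1\cdots\omega_N$ does not change $Z_N$. Hence, grouping coordinates into length-$n$ blocks $B_1,\dots,B_{N/n}$, $Z_N$ is a function of $(g(B_j))_j$, where $g$ identifies $U$ with $V$ and is the identity otherwise.

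\emph{Step 3: entropy loss via quasi-Bernoulli.} Write
$$H(B_1,\dots,B_m)-H(Z_N)\ \ge\ H(B_1,\dots,B_m\mid g(B_1),\dots,g(B_m))\ \ge\ \sum_{j=1}^m H\bigl(B_j\mid (B_k)_{k\ne j},\,g(B_j)\bigr).$$
The second inequality is the chain rule plus the fact that conditioning on more reduces entropy. Fix $j$ and a realization $X,Y$ of the blocks before and after $j$ such that both $XUY$ and $XVY$ have positive measure. Applying \eqref{eq:quasiBernoulli} twice gives $\nu[XUY]\asymp_{C^4}\nu[X]\nu[U]\nu[Y]$, and likewise for $V$. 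So, conditionally on $B_j\in\{U,V\}$ and the other blocks, the law of $B_j$ is a two-point law with probability ratio within $C^{\pm4}\nu[U]/\nu[V]$, and its entropy is at least some $\eta>0$. Also $\mathbb P(B_j\in\{U,V\}\text{ with both extensions allowed})\ge \nu[U]\cdot c_0>0$, uniformly in $j$ by stationarity and quasi-Bernoulli. Summing over $j$ gives a loss of at least $c\,m=cN/n$, completing Step 1.

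\emph{Main obstacle.} The delicate point is the support issue in Step 3. As written, \eqref{eq:quasiBernoulli} only applies when $\nu([UV])>0$, so I must ensure that $U$ and $V$ are interchangeable in context, i.e.\ $\nu[XUY]>0\iff\nu[XVY]>0$ for a positive proportion of contexts. I would handle this by choosing $U=WAW$ and $V=WBW$, where $W$ is a fixed positive-measure word and $A\neq B$ are distinct return words from $W$ to $W$ of equal length. Such $A,B$ exist by positive entropy: the number of admissible return paths grows exponentially. I would then use the quasi-Bernoulli bound only across the splitting at the copies of $W$, where the allowed continuations depend only on the block boundaries adjacent to $W$. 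If this fails in general, I would instead assume the lower bound holds for all positive-measure $U,V$, which is the standard reading for Markov and Gibbs measures. Under that reading the support is a full shift on the letters of positive measure, and the single-letter argument above applies directly.
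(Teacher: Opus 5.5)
There is a genuine gap, and it sits exactly at the point you flag as the main obstacle. Your architecture matches the second half of the paper's argument, namely Proposition~\ref{prop:local-drop}: an exact overlap via $a\perp D$, then a per-block conditional entropy loss summed by the chain rule, then entropy dimension. But the input that step needs is what you leave unproved.

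Step 3 requires a pair $U\neq V$ that is interchangeable in context with probability bounded below uniformly in $j$ and $N$. You say this holds ``by stationarity and quasi-Bernoulli'', but that is unsupported. The bound \eqref{eq:quasiBernoulli} only compares masses when $\nu([UV])>0$; it says nothing about which concatenations are admissible. For an arbitrary pair the claim is false: take a Markov measure in which the letter $u$ is always followed by $u'$ and $v$ by $v'\neq u'$.

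Your repair with $U=WAW$, $V=WBW$ presupposes that $W$ is synchronizing, i.e.\ that admissibility of $XWZ$ is decided by $XW$ and $WZ$ separately. Nothing in the hypothesis gives this. Exponential growth of the number of return words also does not imply that two of them fit a common bilateral exterior with positive probability.

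The fallback, imposing the lower bound for all positive-measure $U,V$, forces the support to be the full shift on the positive-mass letters. That is not the standard reading: Markov measures with forbidden transitions and Gibbs measures on mixing SFTs are quasi-Bernoulli in the paper's sense but violate it. So the fallback only proves a weaker theorem.

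The missing idea, which is the paper's, is to use only the \emph{upper} bound in \eqref{eq:quasiBernoulli}. It gives $\I_{\widetilde\nu}(\omega_{-n+1}^0;\omega_1^m)\le\log C$ for all $n,m$, hence past--future mutual information $E\le\log C$. Then, with $h=h_\nu(\sigma)>0$,
\[
D_m:=\mathrm H_{\widetilde\nu}(\omega_1^m\mid\omega_{\le0},\omega_{>m})\;\ge\;\mathrm H_{\widetilde\nu}(\omega_1^m\mid\omega_{\le0})-\I_{\widetilde\nu}(\omega_{\le m};\omega_{>m})\;=\;mh-E\;>\;0
\]
for $m>\log C/h$. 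Since $D_m>0$, there are distinct $U,V\in\mathcal A_p^m$ and $\delta>0$ such that both $\widetilde\nu(\omega_1^m=U\mid\omega_{\le0},\omega_{>m})\ge\delta$ and $\widetilde\nu(\omega_1^m=V\mid\omega_{\le0},\omega_{>m})\ge\delta$ on a set of positive measure. This is precisely the context-interchangeable pair you need, and with it your Steps 2--3 go through. If you condition on the full exterior, as the paper does, the two-point entropy bound comes from $\delta$ directly, so the lower quasi-Bernoulli bound is never used.

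A minor slip in Step 1: a positive-entropy ergodic $\mu$ can sit on a grid line, e.g.\ $\delta_0\times\operatorname{Leb}_{\mathbb T}$, so your nullity claim is false. The conclusion $\dim\mu=h/\log p$ still holds.
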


To prove Theorem~\ref{cor:quasi-bernoulli-drop}, we use the notion of
bilateral determinism, introduced by Ornstein and Weiss
\cite{OrnsteinWeiss1975}. Roughly speaking, a shift-invariant measure on $\mathcal{A}_p ^\mathbb{Z}$ is bilaterally deterministic if every finite block is almost surely determined by the coordinates outside that block. We show
that failure of bilateral determinism (for the natural extension) forces a  dimension dropping rational projection.
This criterion is sufficient but not necessary, as shown by the example
following Theorem~\ref{thm:predictive-drop}. See Section~\ref{sec:positive}
for the precise definitions and proofs. This discussion leads to the following question:
\begin{question}\label{question:symbolic-characterization}
Can one give a sharp  characterization of the
ergodic $T_p$-invariant measures 
$\mu\in\mathcal P(\mathbb T^2)$ with positive entropy, for which every  projection
preserves dimension? 
\end{question}

\subsection{Prior results and general context}
\label{subsec:prior-context}
A guiding principle behind Furstenberg's  conjectures surrounding the \(\times 2,\times 3\) Conjecture \cite{furstenberg1967disjointness}, is that    exceptional geometric phenomena (like dimension dropping projections) of dynamically defined sets and measures, should arise (only) from algebraic resonances. For example, his sumset conjecture predicted that if \(m,n\geq2\) are multiplicatively independent and \(A,B\subset\mathbb T\) are closed \(T_m\)- and \(T_n\)-invariant sets, respectively, then \(\dim_H(A+B)=\min\{1,\dim_H A+\dim_H B\}\). This is  the same as the diagonal projection of \(A\times B\) having the expected dimension. It was proved in the self-similar setting by Peres and Shmerkin \cite{PeresShmerkin2009} and in full generality, including a measure-theoretic version, by Hochman and Shmerkin \cite{hochman2009local}. A parallel theory, more directly related to the present paper, concerns projections of self-similar measures. Hochman and Shmerkin \cite{hochman2009local} proved that, under the strong separation condition and assuming that the semigroup generated by the orthogonal parts acts minimally on \(G_{d,k}\), every \(k\)-dimensional orthogonal projection has the expected dimension \eqref{Eq Marstrand}. Falconer and Jin \cite{FalconerJin2014} later removed the separation assumption, and, more recently, Algom and Shmerkin \cite{AlgomShmerkin2024} obtained a projection-dependent criterion in terms of the orbit of the projection under the rotation group. Further Furstenberg-type projection results for dynamically defined measures under various structural assumptions can be found in \cite{Bruce2019jin,bruce2022furstenberg,Aleksi2024reso,jordan2019dimension,barany2023scaling,BaranyKaenmakiKolossvary2026,AlgomRodriguezHertzWang2026,Wu2025Countable,algom2024plane}.

On the other hand, Farkas \cite{Farkas2016} proved that self-similar measures always admit dimension dropping projections if the group generated by the orthogonal parts of the self-similar IFS is finite (this includes the fully supported IID measures discussed above). At the same time, the algebraic-resonance heuristic has limitations for properties finer than dimension. Nazarov, Peres and Shmerkin \cite{NazarovPeresShmerkin2009} showed that convolutions of mid-\(a\) and mid-\(b\) Cantor--Lebesgue measures have the expected dimension whenever \(0<a,b<1/2\) and \(\log a/\log b\notin\mathbb Q\), but also constructed, in the supercritical regime, uncountably many parameters for which these convolutions are singular. Similarly, Rapaport \cite{Rapaport2017} constructed a planar self-similar measure with strong separation, dense rotations and dimension greater than \(1\) whose projections have full dimension but are singular for a residual set of directions. In the self-affine setting, exceptional projections can exhibit still more surprising behaviour, including failure of exact dimensionality \cite{FengXie2025,MorrisSert2025} and nontrivial algebraic structure in families of dimension-dropping projections \cite{MorrisSert2025}.

Finally, the immediate motivation for the present work came from Host-type
equidistribution theory. Host \cite{Host1995normal} proved that if
$\mu\in\mathcal P(\mathbb T)$ is ergodic and $T_p$-invariant with positive
entropy, and $\gcd(p,q)=1$, then for $\mu$-almost every $x$ the orbit
$\{T_q^n x:n\geq0\}$
equidistributes for Lebesgue measure. This is a pointwise strengthening of
the positive-entropy measure-rigidity theorem of Rudolph
\cite{Rudolph1990}. An important feature of this circle of ideas is that a
pointwise equidistribution theorem can in turn be used to recover
measure rigidity. In particular, Lindenstrauss
\cite{Lindenstrauss2001} developed Host's method further, obtaining both an extension of Host's
equidistribution theorem and a Host-type proof of the
Rudolph--Johnson rigidity theorem \cite{Johnson1992}. 
Hochman and Shmerkin \cite{HochmanShmerkinNormal} later established
Host's theorem in its natural generality, assuming only that $p$ and
$q$ are multiplicatively independent.

In \cite{AlgomHost,Algom2021} we studied higher-dimensional  versions of this circle of ideas. A key
hypothesis in \cite{Algom2021} is that the ergodic $T_p$-invariant measure
$\mu\in\mathcal P(\mathbb T^2)$ under consideration admits a non-principal projection
$\pi_V$ such that
$\dim \pi_V\mu<\dim\mu.$
This  was the  starting point for the present work.

\subsection{Proof sketch}
Recall that
$\mathcal A_p:=\{0,\ldots,p-1\}^2$, and let $\sigma$ be the left shift on $\mathcal A_p^{\mathbb N}$ . We construct an ergodic
$\sigma$-invariant measure $\nu$ on $\mathcal A_p^{\mathbb N}$ such that its metric entropy is
$h_\nu(\sigma)=s\log p,$
and such that its Euclidean model
$\mu:=\Pi_*\nu$ (recall \eqref{eq: full coding}), 
satisfies the
sharp bound
$\mu(T)\lesssim w(T)^s$
for every Euclidean tube $T$. In particular,
$\dim \pi_V\mu=s=\dim\mu
\text{ for every }V\in G_{2,1}.$

Before going into any further details, let us explain the general idea of the proof. It uses  a classical construction of Grillenberger
\cite{Grillenberger1973}, which is typically applied to build minimal uniquely
ergodic symbolic systems with a prescribed  entropy. In its basic
form, one begins with a finite library of words over $\mathcal{A}_p$, and
iteratively replaces it by libraries of much longer words obtained by
carefully chosen concatenations of the previous ones. By controlling the
frequencies with which the old words occur, while retaining exponentially
many admissible concatenations, one obtains a uniquely ergodic subshift  with
the desired entropy. In the present argument, however, we ask substantially
more from the libraries: besides carrying the required entropy, they must
remain uniformly non-concentrated after projection in every direction, and
this uniformity has to persist at all relevant internal windows. It is not
a priori clear that the usual Grillenberger block-construction can be run while
preserving such a large family of geometric constraints.

To achieve this, we use an idea inspired by the lower-bound counterexample in the recent breakthrough work of Orponen and Rutar on the visibility conjecture \cite{OrponenRutar2026}. A useful idea in their construction is that one can trade an arbitrarily small amount of ``extra'' dimension for greater uniformity in geometric parameters.  We implement the same
principle in our setting by working, at the $k$-th stage, with an
exponent $t_k$ which is slightly larger than the final target $s$, with
$t_k\downarrow s$. The surplus $t_k-s$ acts as a budget: when we pass to
 concatenations and  prune the resulting family in order to
enforce the projection estimates simultaneously in every direction and at
every internal window, we allow a small deterioration of the exponent. By
choosing the scales sufficiently far apart, this loss is kept smaller than
$t_k-t_{k+1}$. The construction can therefore be iterated while preserving both the frequency and cardinality estimates needed for unique ergodicity and entropy, and the uniform geometric non-concentration needed for the tube estimate.

Before giving more details, let us fix and recall some  notation. 
For $m\in\mathbb N$, let $\mathcal D_m$ denote $p^m$-adic partition of
$\mathbb R$, 
$\mathcal D_m
:=
\left\{
\left[\frac{k}{p^m},\frac{k+1}{p^m}\right)
:
k\in\mathbb Z
\right\}.$
For $e\in S^1$, recall that
$P_e(x)=e\cdot x.$
If $V=\operatorname{span}\{e\}$, then $P_e$ differs from the orthogonal
projection $\pi_V$ only by a natural isometric identification of $V$
with $\mathbb R$, which preserves dimension. By the standard estimate \eqref{eq:main-occupancy-target} proved below, it suffices to prove that there exists a constant \(C>0\) such that, for every \(m\in\mathbb N\),
\[
\sup_{e\in S^1}\max_{I\in\mathcal D_m}(P_e\mu)(I)
\leq Cp^{-sm}.
\]

\medskip

\paragraph{\textbf{Step 1: the initial library.}}
Fix $0<s<1$. We begin with a finite-scale version of the desired property.
A \emph{library} is a set
$\mathcal W\subseteq\mathcal A_p^N$
of words of length $N$. We will construct such a library
with exponentially many words, while requiring that every internal block of
every possible length remains uniformly well spread out upon projection in every
direction.

More precisely, for $w=(w_1,\ldots,w_N)\in\mathcal W$, a starting position
$0\leq a\leq N-m$, and $1\leq m\leq N$, define
\[
X_{a,m}(w)
:=
\sum_{r=1}^m p^{-r}w_{a+r}
=
\Pi\bigl(w_{a+1},\ldots,w_{a+m}\bigr)
\in[0,1]^2.
\]
Thus $X_{a,m}(w)$ is the Euclidean point encoded  by the first
$m$ digits of the shifted word $\sigma^a w$.

We want these points to be uniformly non-concentrated after projection,
simultaneously for every internal window and every direction. To this end,
fix exponents and constant
\[
s<\alpha<\rho<1, \text{ and } \quad A>0.
\]
We require that for every \(1\leq m\leq N\),
\begin{equation} \label{eq: step 1}
\sup_{\substack{e\in S^1,\;a\in\{0,\ldots,N-m\}\\
I\subset\mathbb R\ \mathrm{interval},\;|I|\leq p^{-m}}}
\frac{1}{|\mathcal W|}
\#\left\{
w\in\mathcal W:
e\cdot X_{a,m}(w)\in I
\right\}
\leq Ap^{-\alpha m}.
\end{equation}

It is not hard to see that such libraries do exist with
$|\mathcal W|\asymp p^{\rho N}$
for all sufficiently large $N$.  The basic observation is that, for a
uniformly chosen word of length $N$, any fixed \(m\)-digit internal block is uniformly
distributed on the \(p^{-m}\)-grid in \([0,1)^2\), while a strip of width
\(p^{-m}\) meets only \(O(p^m)\) of its \(p^{2m}\) points. Thus the
probability of lying in such a strip is \(O(p^{-m})\), substantially smaller
than the permitted bound \(p^{-\alpha m}\). Since \(\rho>\alpha\), a
standard probabilistic argument then produces the required library. See Lemma~\ref{lem:initial-library} for more details.
\medskip

\paragraph{\textbf{Step 2: balanced concatenation and pruning.}}

Suppose that \(\mathcal W\subset\mathcal A_p^N\) satisfies \eqref{eq: step 1} with exponent \(\alpha\), and write \(M:=|\mathcal W|\).
Fix a target exponent $t$ with \(s<t<\alpha\), choose a sufficiently large
multiple \(q\) of \(M\), and set \(N':=qN\). We form length-\(N'\)
concatenations of \(q\) words from \(\mathcal W\), requiring that every
old word occur exactly \(q/M\) times in each concatenation. This
\emph{balancing} will later provide the frequency control needed for
unique ergodicity. We  seek to prune this balanced family to exponential growth rate
\(t\), while retaining its frequency and non-concentration properties.

To see why this is possible, first ignore the balancing condition and
choose the \(q\) old words independently and uniformly from
\(\mathcal W\), producing a random concatenation \(W\). Fix a direction, an interval of length \(p^{-m}\), and an
\(m\)-digit window meeting exactly \(R\) "old" words in pieces of lengths
\(\ell_1,\ldots,\ell_R\). If the projection of the entire window lies in
the given interval, then the projection of its first piece must lie in an
interval of length \(O(p^{-\ell_1})\). After fixing this piece and
rescaling, the same argument as in Step 1 applies to each subsequent piece. Since the
old words are chosen independently, \eqref{eq: step 1} gives
$\mathbb P\bigl(e\cdot X_{a,m}(W)\in I\bigr)
\leq
C^R p^{-\alpha(\ell_1+\cdots+\ell_R)}
=
C^R p^{-\alpha m},$
where \(C\geq1\) depends only on the old non-concentration constant.
Moreover, \(R\leq m/N+2\), and hence
$C^R p^{-\alpha m}
\leq
C^2p^{-\left(\alpha-\frac{\log_p C}{N}\right)m}.$
Thus, by choosing the old block length \(N\) sufficiently large, the
concatenation still satisfies a stronger estimate than the desired one
with exponent \(t\); this is precisely where the surplus
\(\alpha-t\) is used.

Imposing the balancing condition produces only a further arbitrarily
small loss when \(q\) is sufficiently large. The remaining surplus then
allows us, by a standard probabilistic selection, to choose a balanced
sublibrary 
\(\mathcal W'\subset\mathcal A_p^{N'}\), such that for every \(1\leq m\leq N'\)
\begin{equation}\label{eq: step 2}
|\mathcal W'|=\lfloor p^{tN'}\rfloor
\quad\text{and}\quad
\sup_{\substack{e\in S^1,\;0\leq a\leq N'-m\\
I\subset\mathbb R\ \mathrm{interval},\;|I|\leq p^{-m}}}
\frac{1}{|\mathcal W'|}
\#\left\{
W\in\mathcal W':
e\cdot X_{a,m}(W)\in I
\right\}
\leq A'p^{-tm}.
\end{equation}
The formal statement, including the
stronger estimate for windows crossing a single old boundary needed
later, is Theorem~\ref{thm:pruning}.

\medskip

\paragraph{\textbf{Step 3: iteration and the limiting subshift.}}

Choose a decreasing sequence
\[
t_1>t_2>\cdots>s,
\qquad
t_k\downarrow s.
\]
Starting from the library supplied by Step~1, with exponent larger than
\(t_1\), we apply Step~2 successively with target exponents
\(t_1,t_2,\ldots\). At each stage, the new block length is chosen
sufficiently large that the loss incurred at the next concatenation step
is smaller than \(t_k-t_{k+1}\). Thus the surplus above \(s\) provides
enough room to continue the construction indefinitely.
Thus , we obtain lengths \(N_k\to\infty\), constants \(A_k\),
and libraries
\(\mathcal W_k\subset\mathcal A_p^{N_k}\) such that, for every
\(1\leq m\leq N_k\),
\begin{equation}\label{eq: step 3}
|\mathcal W_k|=\lfloor p^{t_kN_k}\rfloor
\quad\text{and}\quad
\sup_{\substack{e\in S^1,\;0\leq a\leq N_k-m\\
I\subset\mathbb R\ \mathrm{interval},\;|I|\leq p^{-m}}}
\frac{1}{|\mathcal W_k|}
\#\left\{
W\in\mathcal W_k:
e\cdot X_{a,m}(W)\in I
\right\}
\leq A_kp^{-t_km}.
\end{equation}
Moreover, every word in \(\mathcal W_{k+1}\) is a balanced concatenation
of words from \(\mathcal W_k\), as in \eqref{eq: step 2}. At each stage we also retain the more refined conclusions of
Theorem~\ref{thm:pruning}: the all-window estimate has a small exponent
surplus beyond \(t_k\), while a window crossing a single boundary between
two "old" words is controlled, up to the pruning error, by the product
of the bounds for its two pieces. Both estimates will be used in Step~4.

For each \(k\), let \(X_k\) be the shift-invariant closed set generated
by infinite concatenations of words from \(\mathcal W_k\), and set
$X:=\bigcap_{k\geq1}X_k.$
The sets \(X_k\) are nested because every level-\((k+1)\) word is a
concatenation of level-\(k\) words. Moreover, balancing ensures that every
level-\((k+1)\) word contains each level-\(k\) word with exactly the same
frequency. Thus, for any fixed finite word, its frequencies in
higher-level words can differ only through occurrences crossing
level-\(k\) boundaries, whose proportion is \(O(1/N_k)\). These
frequencies therefore converge uniformly, so \(X\) is uniquely ergodic;
denote its invariant measure by \(\nu\).

Finally, \eqref{eq: step 3} gives
$h_\nu(\sigma)
\leq
\lim_{k\to\infty}\frac{\log|\mathcal W_k|}{N_k}
=
s\log p.$
The reverse inequality will follow in Step~4 from the projection
non-concentration estimate. The iterative construction of the libraries
and the uniquely ergodic limiting subshift is carried out formally in
Theorem~\ref{thm:fusion}.

\medskip

\paragraph{\textbf{Step 4: intermediate scales and the tube estimate.}}

It remains to pass from the finite libraries to the invariant measure
\(\nu\).  Fix \(m\) and
choose \(k\) such that
$N_{k-1}<m\leq N_k.$
Let \(Q_{k+1,m}\) be the distribution obtained by choosing a word from
\(\mathcal W_{k+1}\) uniformly and then choosing an \(m\)-digit window
inside it uniformly. Since every level-\((k+1)\) word is a concatenation
of level-\(k\) words and \(m\leq N_k\), each such window is of one of two
types: it either lies inside a single level-\(k\) word or crosses exactly
one level-\(k\) boundary.

For windows of the first type, balancing reduces the average over
\(\mathcal W_{k+1}\) to the uniform average over \(\mathcal W_k\), where
the refined all-window estimate applies. For windows of the second type,
we split the window at the boundary and use the product estimate from
Theorem~\ref{thm:pruning}. The exponent surplus above \(s\), together
with the rapid growth of the scales, absorbs the constants and pruning
errors in both cases. Averaging over the possible positions of the
boundary therefore gives
$Q_{k+1,m}
\left(
\left\{
u\in\mathcal A_p^m:
e\cdot X_{0,m}(u)\in I
\right\}
\right)
\lesssim p^{-sm}$
uniformly in \(e\in S^1\) and intervals \(I\) of length at most
\(p^{-m}\).

The balancing condition also ensures that the distribution of
\(m\)-digit blocks under \(\nu\) differs from \(Q_{k+1,m}\) only through
windows crossing level-\((k+1)\) boundaries. Their proportion is
\(O(m/N_{k+1})\), which is negligible by the choice of the scales.
Thus,
\begin{equation}\label{eq: step 4}
\sup_{\substack{e\in S^1\\
I\subset\mathbb R\ \mathrm{interval},\;|I|\leq p^{-m}}}
\nu\left(
\left\{
\omega:
e\cdot\Pi(\omega)\in I
\right\}
\right)
\lesssim p^{-sm}
\end{equation}
for every \(m\), where here we abuse notation
$\Pi(\omega):=\sum_{r=1}^m p^{-r}\omega_r.$
It is now direct to deduce the required estimate  for $\mu:=\Pi_*\nu$
$\sup_{e\in S^1}\max_{I\in\mathcal D_m}(P_e\mu)(I)
\lesssim p^{-sm},$
By Lemma~\ref{lem:padic-to-tubes}, this gives
$\mu(T)\lesssim w(T)^s$
for every Euclidean tube \(T\).

Estimate \eqref{eq: step 4} also gives the missing lower bound
\(h_\nu(\sigma)\geq s\log p\), while Step~3 gives the reverse inequality.
Hence \(h_\nu(\sigma)=s\log p\). The coding then gives
\(\dim\mu\leq s\), whereas the tube estimate gives
\(\dim\mu\geq s\) and \(\dim P_e\mu\geq s\) for every \(e\in S^1\).
Since projections are Lipschitz, we conclude that
\[
\dim\mu=\dim P_e\mu=s
\text{ for every }e\in S^1.
\]
Exact-dimensionality of $\mu$ and $P_e \mu$ follow from
Feng and Hu \cite{feng2009dimension}. The details of the
intermediate-scale estimate and the passage to \(\nu\) are given in
Theorem~\ref{thm:fusion} and Corollary~\ref{cor:fusion-entropy}.

\subsection{Organization}

Section~\ref{sec:prelim} collects the notation and elementary entropy
facts used throughout the paper. In Section~\ref{sec:proof-main} we prove
Theorem~\ref{thm:main}. We only state the initial-library and pruning inputs at this point,
and then prove Theorem \ref{thm:main} using them as a black box. Section~\ref{sec:library-pruning-proofs} then contains the proof of
these statements. Finally, Theorem \ref{cor:quasi-bernoulli-drop} is proved in Section \ref{sec:positive}.

\subsection{Acknowledgements}
An early attempt to solve the problem studied in this paper was carried out jointly with Zhiren Wang at Penn State in 2019--2020. Although we did not resolve it at the time, our work together played an important role in the development of this project. I greatly enjoyed working with Zhiren and learned a great deal from him. I am also grateful to Tuomas Orponen and Meng Wu for helpful comments and discussions.

\section{Preliminaries}\label{sec:prelim}
All logarithms in this paper are natural, unless the base is explicitly indicated.
Also, recall that for $m\in\mathbb N$, we defined
$\mathcal D_m
:=
\left\{
[kp^{-m},(k+1)p^{-m})
:
k\in\mathbb Z
\right\}$.
Next, let $(Y,\mathcal B)$ be a measurable space, let $\theta$ be a probability
measure on $Y$, and let $\mathcal Q$ be a finite or countable measurable
partition of $Y$. We write the Shannon entropy of $\theta$ with respect to $\mathcal{Q}$ as
$\mathrm H(\theta,\mathcal Q)
:=
-\sum_{P\in\mathcal Q}\theta(P)\log\theta(P)\in [0,\infty],$
with the convention $0\log0:=0$.

The following simple observation will be useful:
\begin{lemma}\label{lem:padic-to-tubes}
Let $\mu\in\mathcal P(\mathbb T^2)$ and let $0<s\leq1$. Suppose that there is a constant $C>0$
such that, for every $m\in\mathbb N$,
\[
\sup_{e\in S^1}
\max_{I\in\mathcal D_m}
(P_e\mu)(I)
\leq
C p^{-sm}.
\]
Then there is a constant $C'>0$, depending only on $C,p$, and $s$, such
that
$\mu(T)\leq C' w(T)^s$
for every Euclidean tube $T\subset\mathbb R^2$.
\end{lemma}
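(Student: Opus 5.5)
Regard $\mu$ as a measure on $[0,1)^2$ and fix a tube $T=T(\ell,w)$. The plan is to reduce $\mu(T)$ to the mass that a one-dimensional projection of $\mu$ gives to a short interval, and then cover that interval by a bounded number of $p$-adic intervals at the right scale.

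First, let $e\in S^1$ be a unit vector normal to $\ell$. Then there is $c\in\mathbb R$ with
\[
T=\{x\in\mathbb R^2: |e\cdot x-c|<w/2\}=P_e^{-1}(J),
\qquad J:=(c-w/2,\,c+w/2).
\]
Consequently
\[
\mu(T)=(P_e\mu)(J).
\]
So it suffices to bound $(P_e\mu)(J)$ uniformly over $e\in S^1$ and over intervals $J$ of length $w$.

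Next, split into two cases according to the size of $w$.

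\emph{Case $w\geq 1$.} We use the trivial bound
\[
\mu(T)\leq 1\leq w^s .
\]

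\emph{Case $0<w<1$.} Choose $m\in\mathbb N\cup\{0\}$ with $p^{-m-1}<w\leq p^{-m}$. The interval $J$ has length at most $p^{-m}$, so it meets at most two intervals of $\mathcal D_m$. When $m\geq1$, the hypothesis gives
\[
(P_e\mu)(J)\leq 2Cp^{-sm}=2Cp^{s}\,p^{-s(m+1)}\leq 2Cp^{s}w^{s}.
\]
When $m=0$, we simply use $\mu(T)\leq1\leq p^s w^s$, since $w>p^{-1}$ in that case. Thus in all cases $C'=\max\{1,2C\}p^s$ works, and this constant depends only on $C$, $p$ and $s$.

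There is no real obstacle here. The only points needing care are the following.
\begin{itemize}
\item The hypothesis is stated only for $m\in\mathbb N$, so the scale $m=0$ is handled by the trivial bound.
\item The tube may be open or half-open relative to the dyadic-type intervals in $\mathcal D_m$, which are half-open. An interval of length at most $p^{-m}$ always meets at most two elements of $\mathcal D_m$, so this does not matter.
\item The identification of $\pi_V$ with $P_e$ is exactly as described in the introduction.
\end{itemize}
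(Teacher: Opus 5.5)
Your proof is correct and is essentially the paper's argument. You write $T=P_e^{-1}(J)$ with $|J|=w$, pick $m$ with $p^{-(m+1)}<w\leq p^{-m}$, use that $J$ meets at most two atoms of $\mathcal D_m$, and fall back on $\mu(T)\leq 1$ at coarse scales. Your constant $C'=\max\{1,2C\}p^s$ is in fact the right one: the paper records $C'=2Cp^s$, which absorbs the trivial regime $w\geq p^{-1}$ only when $2C\geq 1$.
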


\begin{proof}
First, given a bounded interval $J\subset\mathbb R$, if possible choose
$m$ so that
$p^{-(m+1)}<|J|\leq p^{-m}.$
Then $J$ meets at most two atoms of $\mathcal D_m$, and hence, uniformly in
$e\in S^1$,
$(P_e\mu)(J)
\leq
2Cp^{-sm}
\leq
2Cp^s|J|^s.$  If $|J|\geq p^{-1}$, then
$(P_e\mu)(J)\leq1\leq p^s|J|^s.$

Now let $T=T(\ell,w)$ be a tube of width $w$, and let $e\in S^1$ be a
unit normal to $\ell$. Then
$T=P_e^{-1}(J)$
for an interval $J\subset\mathbb R$ of length $w$. Therefore
\[
\mu(T)
=
(P_e\mu)(J)
\leq
2Cp^s w^s.
\]
Thus the conclusion holds with $C'=2Cp^s$.
\end{proof}

We next fix and recall some more notation. Recall that
$\mathcal A_p=\{0,\ldots,p-1\}^2$
and that $\sigma$ denotes the left shift on $\mathcal A_p^{\mathbb N}$.
The coding map $\Pi: \mathcal A_p^{\mathbb N} \rightarrow \mathbb{R}^2$ is defined as in \eqref{eq: full coding}. 
When considering the dynamics on $\mathbb T^2$, we identify
$\Pi(\omega)$ with its image modulo $\mathbb Z^2$, 
so that
$\Pi\circ\sigma=T_p\circ\Pi.$
For $m\geq1$, we also write
$\Pi_m(\omega)
:=
\sum_{n=1}^m p^{-n}\omega_n$
for the $m$-digit truncation. Then,  uniformly in $\omega$,
$\left|\Pi(\omega)-\Pi_m(\omega)\right|
\lesssim_{p}
C p^{-m},$
and hence, uniformly in $e\in S^1$,
$\left|
P_e\Pi(\omega)-P_e\Pi_m(\omega)
\right|
\lesssim_{p} p^{-m}.$

Next, recall that a Borel probability measure $\theta$ on $\mathbb R^d$ is called \emph{exact dimensional} if there exists $s\geq0$ such that \[ \lim_{r\downarrow0} \frac{\log\theta(B(x,r))}{\log r} = s \] for $\theta$-almost every $x\in\mathbb R^d$. In this case, it follows directly that $\dim\theta=s$.

\begin{thm} \cite{feng2009dimension} \label{thm:exact-dimensionality}
Let $\nu$ be an ergodic shift-invariant probability measure on
$\mathcal A_p^{\mathbb N}$, and let
$\mu:=\Pi_*\nu.$
Then $\mu$ is exact dimensional. Moreover, for every $e\in S^1$, the
projected measure $P_e\mu$ is exact dimensional.
\end{thm}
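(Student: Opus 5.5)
This statement is quoted from \cite{feng2009dimension}, so the plan is not to reprove it from scratch. Instead, I would place our setting inside the framework of Feng and Hu and check that the identifications made in Section~\ref{section: main results} do not affect exact dimensionality.

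\textbf{Step 1: reduce to IFS coding maps.} As explained after Theorem~\ref{thm:main}, $\Pi$ is the coding map of the planar self-similar IFS $\Phi=\{x\mapsto (x+d)/p\}_{d\in\mathcal A_p}$. Likewise, $P_e\circ\Pi=\Pi_e$ from \eqref{eq: proj coding} is the coding map of the one-dimensional IFS $\Phi_e$. Both are IFSs of similarities with the common contraction ratio $1/p$ and trivial orthogonal parts. Feng--Hu's main theorem makes no separation assumption: for an ergodic $\sigma$-invariant $\nu$ and the coding map of any IFS of similitudes, the image measure is exact dimensional. Its dimension is $h_{\pi}(\sigma,\nu)/\log p$, where $h_\pi$ is the projection entropy. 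Applying this once with $\Phi$ and once with $\Phi_e$, for each fixed $e\in S^1$, gives both claims for the Euclidean measures $\Pi_*\nu$ and $(\Pi_e)_*\nu$.

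\textbf{Step 2: the torus identification.} The measure $\mu$ lives on $\mathbb T^2$ and is represented on $[0,1)^2$, while $\Pi$ takes values in $[0,1]^2$. The two Euclidean measures can differ only on the set $E$ of $\omega$ for which some coordinate of $\Pi(\omega)$ equals $1$. This happens exactly when that coordinate of $\omega_n$ is eventually constant and equal to $p-1$. The set $E$ is $\sigma$-invariant, so by ergodicity $\nu(E)\in\{0,1\}$.
\begin{itemize}
\item If $\nu(E)=0$, the representative of $\mu$ on $[0,1)^2$ coincides with $\Pi_*\nu$, and Step~1 applies verbatim, also to $P_e\mu$.
\item If $\nu(E)=1$, take the coordinate $i$ for which $(\omega_n)_i$ is eventually $p-1$ on a set of positive, hence full, measure. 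Invariance then forces $(\omega_n)_i=p-1$ for all $n$, $\nu$-almost surely. The reduction mod $\mathbb Z$ in that coordinate is then a fixed translation by $-1$ on a full-measure set, which is an isometry. Exact dimensionality is preserved under it, and so is that of the projections, since $P_e$ composed with a translation is $P_e$ plus a constant.
\end{itemize}

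\textbf{Main obstacle.} The substantive content, handled by Feng--Hu, is exact dimensionality of $(\Pi_e)_*\nu$ when $\Phi_e$ has overlaps, for instance rational $e$ with exact overlaps. If one had to sketch it, one would use the conditional measures of $\nu$ on the fibres $\Pi_e^{-1}(x)$. Combining the Shannon--McMillan--Breiman theorem with a martingale or ergodic-theorem argument then shows that $\log (\Pi_e)_*\nu(B(x,p^{-n}))/(-n\log p)$ converges almost surely to the normalized projection entropy. The key input is the common contraction ratio, which makes the cylinder scales exactly $p^{-n}$. I would not reproduce this argument and would cite \cite{feng2009dimension} directly.
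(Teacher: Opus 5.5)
Your proposal matches the paper's argument: the paper's proof is exactly the direct application of Feng--Hu's Theorem~2.8 (no separation assumption, dimension $h_\pi/\log p$) to the planar IFS $\Phi$ and to each projected IFS $\Phi_e$, as in your Step~1. Your Step~2 on the fundamental-domain identification is an extra check the paper leaves implicit, and it works, with one small correction: a coordinate of $\Pi(\omega)$ equals $1$ only when \emph{all} of its digits equal $p-1$ (digits that are eventually $p-1$ but not all $p-1$ give a $p$-adic rational in $[0,1)$), which your ergodicity dichotomy still covers because this set is contained in your invariant set.
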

This is a direct consequence of Feng and Hu \cite[Theorem 2.8]{feng2009dimension} applied to the self-similar IFS $\Phi
=
\left\{
x\mapsto\frac{x+d}{p}:
d\in\mathcal A_p
\right\}$ on $\mathbb{R}^2$, and $\Phi_e
=
\left\{
x\mapsto\frac{x+e\cdot d}{p}:
d\in\mathcal A_p
\right\}$ on $\mathbb{R}$.

Finally, if $\theta$ is an exact dimensional compactly supported measure on
$\mathbb R$ of dimension $d$, then it is well known that
\[
\lim_{m\to\infty}
\frac{\mathrm H(\theta,\mathcal D_m)}
     {m\log p}
=
d.
\]
See e.g. \cite{Fan2002measures}.

\section{Proof of Theorem~\ref{thm:main}}\label{sec:proof-main}

Fix $p\geq2$ and $0<s<1$, and recall that
$\mathcal A_p=\{0,\ldots,p-1\}^2.$
We  construct an ergodic $\sigma$-invariant probability measure
$\nu$ on $\mathcal A_p^{\mathbb N}$ and define
$\mu:=\Pi_*\nu.$ We require $\mu$ to satisfy two properties. First, we will arrange that $\dim\mu=s$. Second, we will prove
that there is a constant $C>0$ such that, for every $m\in\mathbb N$,
\begin{equation}\label{eq:main-occupancy-target}
\sup_{e\in S^1}
\max_{I\in\mathcal D_m}
(P_e\mu)(I)
\leq
Cp^{-sm}.
\end{equation}
By Lemma~\ref{lem:padic-to-tubes}, \eqref{eq:main-occupancy-target}
implies
$\mu(T)\lesssim w(T)^s$
for every Euclidean tube $T$. Note that both $\mu$ and all of its projections are 
 exact dimensional by Theorem~\ref{thm:exact-dimensionality}.

\subsection{The initial library, and pruning}\label{subsec:library-pruning}

We begin by recording the two finite-scale inputs used in the construction.
Their proofs are postponed to Section~\ref{sec:library-pruning-proofs}.
They correspond to Steps~1 and~2 of the proof sketch.

\begin{definition} \label{Def windown}
Let $N\in\mathbb N$.

\begin{enumerate}
\item
For a word $w=(w_1,\ldots,w_N)\in\mathcal A_p^N$, we call a pair
$(a,m)$ satisfying
$1\leq m\leq N,
\,
0\leq a\leq N-m,$
an \emph{internal window} of $w$.

\item
For such a window and word, define
$X_{a,m}(w)
:=
\sum_{r=1}^m \frac{w_{a+r}}{p^r}
\in[0,1]^2.$

\item
Fix $C_0=10$. For $\alpha >0, A\geq 1$, we say that a library
$\mathcal W\subset\mathcal A_p^N$ is \emph{$(\alpha,A)$-transverse} if, for every $1\leq m\leq N$,
\begin{equation}\label{eq:transverse-main}
\sup_{\substack{
0\leq a\leq N-m\\
e\in S^1\\
I\subset\mathbb R,\ |I|\leq C_0p^{-m}
}}
\frac{1}{|\mathcal W|}
\#\left\{
w\in\mathcal W:
e\cdot X_{a,m}(w)\in I
\right\}
\leq  A\cdot p^{-\alpha m}.
\end{equation}
\end{enumerate}
\end{definition}

The first input we require says that large transverse libraries exist.

\begin{lemma}\label{lem:initial-library} 
Let
$0<\alpha<\rho<1.$
There exists $A_0\geq1$, depending only on $p,\alpha,\rho$, and $C_0$,
such that for all sufficiently large $N$, there exists an
$(\alpha,A_0)$-transverse library
$\mathcal W\subset\mathcal A_p^N,
\,
|\mathcal W|=\lfloor p^{\rho N}\rfloor.$
\end{lemma}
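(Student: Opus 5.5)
Choose $\mathcal W$ at random: draw $M:=\lfloor p^{\rho N}\rfloor$ words $w^{(1)},\ldots,w^{(M)}$ independently and uniformly from $\mathcal A_p^N$ (as a multiset), and show that with positive probability the sample is $(\alpha,A_0)$-transverse and has no repeats. Repeats have probability at most $M^2p^{-2N}\le p^{(2\rho-2)N}\to0$.

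\emph{Discretization.} The supremum over all $e\in S^1$ and all intervals must become a finite union bound. Fix $m$ and $a$. The point $X_{a,m}(w)$ lies in the grid $G_m:=p^{-m}\{0,\ldots,p^m-1\}^2\subset[0,1]^2$. Take a $p^{-m}$-net $E_m\subset S^1$ with $|E_m|\lesssim p^m$. If $|e-e'|\le p^{-m}$, then $|e\cdot x-e'\cdot x|\le 2p^{-m}$ on $[0,1]^2$. Hence any interval $I$ with $|I|\le C_0p^{-m}$ and any direction $e$ are dominated by some $e'\in E_m$ and some interval $J$ from a fixed family $\mathcal J_m$. This family consists of intervals $[jp^{-m},(j+C_0+6)p^{-m}]$ with $|j|\lesssim p^m$, so $|\mathcal J_m|\lesssim p^m$. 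It therefore suffices to control the events $\{e'\cdot X_{a,m}(w)\in J\}$ for the $\lesssim N\cdot p^{2m}$ triples $(a,e',J)$, at each scale $m\le N$.

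\emph{Single-word probability.} For a uniform $w$, the block $(w_{a+1},\ldots,w_{a+m})$ is uniform on $\mathcal A_p^m$, so $X_{a,m}(w)$ is uniform on $G_m$. A strip $\{x:e'\cdot x\in J\}$ of width $O(p^{-m})$ meets $O(p^m)$ points of the $p^{-m}$-grid inside $[0,1]^2$, uniformly in the direction. So the probability is $q_m\le Kp^{-m}$ with $K=K(C_0)$. The count $S=\#\{i:e'\cdot X_{a,m}(w^{(i)})\in J\}$ is then $\mathrm{Bin}(M,q_m)$, with mean at most $KMp^{-m}$.

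\emph{Concentration and union bound; the main obstacle.} The main obstacle is making the union bound work at every scale $m$ simultaneously, including large $m$ where $Mp^{-\alpha m}$ can be $<1$. Apply the Chernoff bound $\mathbb P(S\ge \lambda)\le (eMq_m/\lambda)^{\lambda}$ with $\lambda:=A_0Mp^{-\alpha m}$. Since $q_m\le Kp^{-m}$, the ratio $eMq_m/\lambda\le eKA_0^{-1}p^{-(1-\alpha)m}\le p^{-(1-\alpha)m}$ once $A_0\ge eK$.

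Split into two ranges of $m$.

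\begin{itemize}
\item If $\lambda\ge1$, the tail is at most $p^{-(1-\alpha)m\lambda}$. Then $p^{-(1-\alpha)m\lambda}\cdot Np^{2m}$ is summable over $m$ and tends to $0$ as $N\to\infty$. For large $m$ this uses $\lambda\ge1$; for small $m$, $\lambda$ is huge (of size $Mp^{-\alpha m}$).
\item If $\lambda<1$, i.e.\ $p^{\alpha m}>A_0M$, we instead need the event $S\ge1$ to be rare. Here any nonempty count $S\ge1$ already violates the bound, and this cannot be avoided by choice of $A_0$. So it suffices to make the count $0$ or at most a bounded integer $B$. Replace the threshold by $\max\{\lambda,B\}$, with $B$ a fixed integer to be chosen. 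Then $\mathbb P(S\ge B)\le (KMp^{-m})^B$, and the union bound is $\sum_{m}Np^{2m}(KMp^{-m})^B$ over $m\ge \log_p(A_0M)/\alpha$. Since $M\le p^{\rho N}$ and $p^{-m}\le (A_0M)^{-1/\alpha}$ with $\rho<\alpha^{-1}\rho$... this requires $M p^{-m}\le p^{-(1/\alpha-1)\rho N}$-type decay, which beats $p^{2m}$ once $B$ is large, because $m\le N$ gives $p^{2m}\le p^{2N}$. So choose $B>2/((1/\alpha-1)\rho)+1$.
\end{itemize}

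Finally, a count of at most $B$ is at most $BA_0^{-1}\cdot A_0Mp^{-\alpha m}\cdot p^{\alpha m}/M$. In this range $p^{\alpha m}/M\le p^{\alpha N}/M$, which can be large, so this step needs care. I would handle it by noting that the problematic range is only $m$ with $p^{\alpha m}>M$, i.e. $m>\rho N/\alpha$. There I would need $B\le A_0Mp^{-\alpha m}$, which fails near $m=N$ when $\alpha>\rho$. But the lemma assumes $\alpha<\rho$, so $Mp^{-\alpha m}\ge p^{(\rho-\alpha)N}/2\ge1$ for all $m\le N$, and the second range is empty. Thus only the first case arises, with $\lambda\ge A_0p^{(\rho-\alpha)N}/2$. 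The union bound then gives total failure probability $\lesssim\sum_{m\le N}Np^{2m}p^{-(1-\alpha)m\lambda}\to0$. The resulting $A_0$ depends only on $p,\alpha,\rho,C_0$.

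Taking a good sample (distinct words, all bounds satisfied) gives the desired $\mathcal W$.
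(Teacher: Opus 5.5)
Your proposal is correct and follows essentially the paper's argument: a random library, uniformity of each internal $m$-block on the $p^{-m}$-grid so that a strip of width $O(p^{-m})$ is hit with probability $O(p^{-m})$, a Chernoff tail of order $\exp(-c\,p^{(\rho-\alpha)N})$ (because $\alpha<\rho$ forces $A_0Mp^{-\alpha m}\geq \frac{A_0}{2}p^{(\rho-\alpha)N}$ for every $m\leq N$), and a union bound over $O(Np^{2N})$ discretized constraints. The differences are cosmetic. You sample with replacement, which gives binomial counts plus a collision bound that uses $\rho<1$, whereas the paper samples an $M$-element subset directly and gets hypergeometric counts. Your second bullet on the range $\lambda<1$ can simply be deleted, since, as you eventually observe, that range is empty.
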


The second input is the pruning theorem:
\begin{thm}\label{thm:pruning}
There are constants $\Gamma\geq2$ and $C\geq1$, depending only on
$p$ and $C_0$, with the following property.

Suppose that
$\mathcal W\subset\mathcal A_p^N,
\,
M:=|\mathcal W|,$
is $(\alpha,A)$-transverse, and let
\begin{equation}
\label{eq:pruning-budget}
0<t<
\alpha-\frac{\log_p(\Gamma A)}{N}.
\end{equation}

Then, for every sufficiently large multiple $q$ of $M$, setting
$N':=qN, K:=\lfloor p^{tN'}\rfloor,$
there exists a library
$\mathcal W'\subset\mathcal A_p^{N'},
\,
|\mathcal W'|=K,$
with the following properties.

\begin{enumerate}[label=\textup{(\alph*)}]

\item
Every $W\in\mathcal W'$ is a concatenation of $q$ words from
$\mathcal W$, and every $w\in\mathcal W$ occurs exactly $q/M$ times
in every $W\in\mathcal W'$.

\item
There exists $\beta>t$ such that, for every $1\leq m\leq N'$,
\begin{equation}\label{eq:refined-pruning-main}
\sup_{\substack{
0\leq a\leq N'-m\\
e\in S^1\\
I\subset\mathbb R,\ |I|\leq C_0p^{-m}
}}
\frac{1}{K}
\#\left\{
W\in\mathcal W':
e\cdot X_{a,m}(W)\in I
\right\}
\leq
C\left(
A^2p^{-\beta m}
+
\frac{N'}{K}
\right).
\end{equation}

\item
Suppose, in addition, that for some $B\geq1$, $\gamma>0$, and
$\varepsilon\geq0$, for every $1\leq m\leq N$,
\begin{equation}\label{eq:input-refined}
\sup_{\substack{
0\leq a\leq N-m\\
e\in S^1\\
I\subset\mathbb R,\ |I|\leq C_0p^{-m}
}}
\frac{1}{M}
\#\left\{
w\in\mathcal W:
e\cdot X_{a,m}(w)\in I
\right\}
\leq
Bp^{-\gamma m}+\varepsilon.
\end{equation}
Then, for every $1\leq \ell<m\leq N$,
\begin{equation}\label{eq:boundary-pruning}
\sup_{\substack{
1\leq r<q\\
e\in S^1\\
I\subset\mathbb R,\ |I|\leq C_0p^{-m}
}}
\frac{1}{K}
\#\left\{
W\in\mathcal W':
e\cdot X_{rN-\ell,m}(W)\in I
\right\}
\leq
C\left[
\bigl(Bp^{-\gamma\ell}+\varepsilon\bigr)
\bigl(Bp^{-\gamma(m-\ell)}+\varepsilon\bigr)
+
\frac{N'}{K}
\right].
\end{equation}
\end{enumerate}
\end{thm}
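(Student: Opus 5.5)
Our approach is probabilistic: first analyse a random balanced concatenation, then select $K$ independent samples, and finally discretize the supremum over $e$ and $I$ so that a union bound applies.

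\emph{Step 1 (single random balanced word).} Let $W=w^{(\pi(1))}\cdots w^{(\pi(q))}$, where $\pi$ enumerates a uniformly random arrangement of the multiset containing each $w\in\mathcal W$ exactly $q/M$ times. Property (a) then holds automatically.

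Fix $e$, an interval $I$ with $|I|\le C_0p^{-m}$, and a window $(a,m)$. The window meets old words in consecutive pieces of lengths $\ell_1,\dots,\ell_R$, with $R\le m/N+2$.

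We condition successively on the pieces. If $e\cdot X_{a,m}(W)\in I$, then
\[
e\cdot X_{a,m}(W)=\sum_j p^{-(\ell_1+\dots+\ell_{j-1})}\,e\cdot X^{(j)},
\]
where $X^{(j)}$ is the $\ell_j$-digit point of the $j$-th piece. Given the pieces $j'<j$, the $j$-th piece must satisfy $e\cdot X^{(j)}\in J_j$, where $J_j$ is the rescaled interval of length at most $C_0p^{-(\ell_j+\dots+\ell_R)}+O(p^{-\ell_j})$. Here the tail $\sum_{j'>j}$ is absorbed into the $O(p^{-\ell_j})$ term. Cover $J_j$ by $O(1)$ intervals of length $C_0p^{-\ell_j}$.

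Transversality of $\mathcal W$, applied to a window inside a single old word (a prefix, suffix or interior window), gives a bound $\Gamma_0Ap^{-\alpha\ell_j}$ for a uniformly chosen old word. Since $\pi$ is a sampling without replacement, the conditional law of the $j$-th word given the previous ones differs from uniform by a factor at most $\frac{q/M}{q/M-R}\le 2$ once $q\gg MR$. Hence
\[
\mathbb P\big(e\cdot X_{a,m}(W)\in I\big)\le (2\Gamma_0A)^R p^{-\alpha m}\le (\Gamma A)^2 p^{-(\alpha-\log_p(\Gamma A)/N)m}.
\]
Choose $\beta$ strictly between $t$ and $\alpha-\log_p(\Gamma A)/N$, leaving a gap $\beta_1-\beta>0$ for later. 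The windows crossing only one or two old words produce the $A^2$ in (b).

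For (c) we use the same computation with exactly two pieces $\ell$ and $m-\ell$, now using \eqref{eq:input-refined} in place of transversality. This yields $C(Bp^{-\gamma\ell}+\varepsilon)(Bp^{-\gamma(m-\ell)}+\varepsilon)$.

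\emph{Step 2 (selection and discretization).} Draw $W_1,\dots,W_K$ independently with the above law; since $K\ll$ the number of balanced words, they are distinct with high probability.

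For fixed $m$, the points $X_{a,m}(W)$ lie on the grid $p^{-m}\mathbb Z^2\cap[0,1]^2$. The event $\{e\cdot X\in I\}$ over all $e\in S^1$ and $|I|\le C_0p^{-m}$ is contained in one of $O(p^{O(m)})$ events $\{e_i\cdot X\in I_j\}$ with $|I_j|\le 3C_0p^{-m}$, coming from a $p^{-2m}$-net of directions and a grid of intervals. We rescale constants, since transversality was assumed for width $C_0p^{-m}$ only; that $O(1)$ factor goes into $\Gamma$.

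Each count is then a binomial variable with mean at most $K\lambda$, where $\lambda=CA^2p^{-\beta_1 m}$. A Chernoff bound gives
\[
\mathbb P\big(\#>2K\lambda+N'\big)\le e^{-cN'}\quad\text{or better}.
\]
Indeed, when $K\lambda\ge N'$ the multiplicative bound gives $\exp(-K\lambda/3)$, and otherwise $\mathbb P(\#\ge N')\le (eK\lambda/N')^{N'}$, which is tiny provided $K\lambda\le N'/e^2$. Otherwise we simply raise the threshold.

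The number of events is $\le N'^2p^{O(N')}$, counting windows $a$, lengths $m\le N'$ and the net. We need a failure probability $e^{-\Omega(N'\log p)\cdot\text{large}}$. This is where the $N'/K$ term enters: at threshold $N'\cdot C''$ with $C''$ a large constant, the bound $(eK\lambda/(C''N'))^{C''N'}$ beats $p^{O(N')}$ whenever $K\lambda\le N'$. When $K\lambda$ is large, $\exp(-K\lambda/3)$ with $K\lambda\ge N'$ is not enough by itself. In that case we instead use the regime $K\lambda\ge C'''N'\log p$, where $\exp(-K\lambda/3)$ suffices, and absorb the intermediate range into the threshold $C(K\lambda+N')$.

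\emph{Main obstacle.} The delicate step is making the union bound beat $p^{O(N')}$ events uniformly in $m$, including very small $m$, where $\lambda$ is of order $1$ and concentration is trivial. We handle it with the threshold $C(A^2p^{-\beta m}K+N')$, as above. The remaining points are routine: conditioning under balancing (without replacement) and the tail-absorption in the rescaling. A union over (b) and (c) simultaneously, plus distinctness, shows that some realization works for all large $q$.
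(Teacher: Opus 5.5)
Your overall scheme is the same as the paper's. Your random arrangement of the multiset is exactly the uniform law $\mathbb P_{\Omega_q}$ on balanced concatenations; you condition successively on the pieces of a window and use transversality; you select $K$ words at random; and you close with a net, a union bound over $p^{O(N')}$ constraints, and a threshold of the form $2K\theta+DN'$. The gap is in your Step 1 population estimate. You compare the conditional law of the $j$-th block with the uniform law ``once $q\gg MR$''. But (b) must hold for every $m\le N'=qN$, and $R\le m/N+2$ then ranges up to about $q$, so $q\gg MR$ fails for all windows with $m\gtrsim N'/M$. Even the sharp version of your comparison, $\frac{q/M}{q-j+1}\le\frac2M$, only works for $R\le q/2$. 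For longer windows the comparison is genuinely false: the last block of a balanced word is determined by the first $q-1$, so conditional probabilities can equal $1$ rather than $O(1/M)$. For these long windows your proposal gives no population bound, and they are not harmless. Atoms of $\mathbb P_{\Omega_q}$ have size $1/|\Omega_q|=p^{-(\rho-o(1))N'}$ with $\rho$ possibly just above $t$, so a bound $p^{-\beta m}$ with $\beta>t$ and $m$ near $N'$ is sharp up to $p^{o(m)}$ factors. Passing to a shorter sub-window only yields $p^{-\beta m'}$ with $m'\ll m$. So the conditioning under balancing, which you listed as routine, is where the real difficulty lies.

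The missing ingredient is the paper's Case 2 ($R>q/2$), which uses a counting argument instead of conditional probabilities:
\begin{itemize}
\item By the method-of-types bound $|\Omega_q|\ge M^q/(q+1)^M$, a prescribed $R$-tuple of blocks has probability at most $M^{q-R}/|\Omega_q|\le M^{-R}(q+1)^M$.
\item Multiplying by the deterministic count $(JAM)^Rp^{-\alpha m}$ of admissible tuples gives $(\Gamma A)^2p^{-\alpha_*m}(q+1)^M$.
\item Since $R>q/2$ forces $m>qN/4$, one has $(q+1)^M\le p^{\eta_q m}$ with $\eta_q=4M\log_p(q+1)/(qN)\to0$, and this factor is absorbed.
\end{itemize}

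There are also two smaller problems in your Step 2.
\begin{itemize}
\item Drawing $W_1,\dots,W_K$ independently does not give distinct words with high probability. In the regime that matters (in the iteration $t=t_{k+1}$ while $\rho\approx t_k$), $K^2\approx p^{2tN'}\gg p^{\rho N'}\ge|\Omega_q|$, so the birthday bound fails. Sample a uniform $K$-element subset of $\Omega_q$ instead; its counts are hypergeometric and obey the same Chernoff/Bernstein tails.
\item $|\Omega_q|\ge K$ has to be proved, not assumed. The paper gets $M\ge p^{\alpha N}/A$ from transversality at the full window $m=N$, hence $\rho>t$, and then applies the types bound.
\end{itemize}
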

In particular, \eqref{eq:refined-pruning-main} implies that
$\mathcal W'$ is $(t,A')$-transverse for some $A'\geq1$ satisfying
\begin{equation}\label{eq: coarse-pruning-main}
A'\leq C(A^2+N').
\end{equation}
Indeed, since
$K=\lfloor p^{tN'}\rfloor\geq \frac12p^{tN'}$
for all sufficiently large $q$, for every $m\leq N'$ we have
$\frac{N'}{K}
\leq
2N'p^{-tN'}
\leq
2N'p^{-tm}.$
Since $\beta>t$, also $p^{-\beta m}\leq p^{-tm}$. Hence
\[
\frac1K
\#\{W\in\mathcal W':e\cdot X_{a,m}(W)\in I\}
\leq
C(A^2+N')p^{-tm}.
\]

We remark that Part~\textup{(b)} gives the inductive estimate
\eqref{eq: coarse-pruning-main}, which is needed for the next stage of
the construction. Part~\textup{(c)} retains additional information for
windows crossing a single old-block boundary; this  estimate
will be used later to control intermediate scales and obtain the tube
bound.

\subsection{Constructing the subshift}
\label{subsec:fusion}

In this Section we construct our
$\sigma$-invariant measure $\nu$. This corresponds to Step 3 in the proof sketch.

Let  $k\in \mathbb{N}$, fix an integer $N_k$,  a library
$\mathcal W_k\subset\mathcal A_p^{N_k}$, and  $1\leq m\leq N_k$. Let
$Q_{k,m}$  be the probability measure on $\mathcal A_p^m$ obtained by
choosing $W\in\mathcal W_k$ uniformly, then choosing
$a\in\{0,\ldots,N_k-m\}$ uniformly, and returning the block
$(W_{a+1},\ldots,W_{a+m}).$

Recall that we fixed  $0<s<1$. Set
\begin{equation} \label{eq:tk}
t_k:=s+(1-s)2^{-k},
\, k\geq1,\text{ and }\delta_k:=t_k-t_{k+1}=(1-s)2^{-k-1}.
\end{equation}
Thus
$t_k\downarrow s.$
\begin{thm}\label{thm:fusion}
There exist integers
$N_1<N_2<\cdots,$
libraries
$\mathcal W_k\subset\mathcal A_p^{N_k},
\,
M_k:=|\mathcal W_k|,$  a constant $C_{\mathrm{fus}}\geq1$, 
and a uniquely ergodic subshift
$X\subset\mathcal A_p^{\mathbb N}$, whose unique invariant probability
measure is denoted by $\nu$, with the following
properties.

\begin{enumerate}[label=\textup{(\roman*)}]

\item\label{item:fusion-cardinality}
For every $k\geq1$,
\begin{equation}\label{eq:fusion-cardinality}
M_k=\lfloor p^{t_kN_k}\rfloor.
\end{equation}
Moreover, there is an integer $q_k$, divisible by
$M_k$, such that
$N_{k+1}=q_kN_k.$
Every $W\in\mathcal W_{k+1}$ is a concatenation of $q_k$ words from
$\mathcal W_k$, and every $w\in\mathcal W_k$ occurs exactly
$q_k/M_k$ times in this concatenation.
\item\label{item:fusion-growth}
For every $k\geq1$,
\begin{equation}\label{eq:fusion-growth}
\frac{N_k}{N_{k+1}}
\leq
p^{-2N_k}.
\end{equation}

\item\label{item:fusion-projections}
For every $k\geq2$ and every
$N_{k-1}<m\leq N_k$,
\begin{equation}\label{eq:fusion-projections}
\sup_{\substack{
e\in S^1\\
I\subset\mathbb R ,\ |I|\leq C_0p^{-m}
}}
Q_{k+1,m}
\left(
\left\{
u\in\mathcal A_p^m:
e\cdot X_{0,m}(u)\in I
\right\}
\right)
\leq
C_{\mathrm{fus}}p^{-sm}.
\end{equation}
\item\label{item:fusion-frequencies}
For every $k\geq1$ and every $1\leq m\leq N_k$,
\begin{equation}\label{eq:fusion-frequencies}
\sup_{\mathcal E\subset\mathcal A_p^m}
\left|
\nu([\mathcal E])-Q_{k,m}(\mathcal E)
\right|
\leq
C_{\mathrm{fus}}\frac{m}{N_k},\, \text{ where } [\mathcal E]
:=
\left\{
\omega\in X:
(\omega_1,\ldots,\omega_m)\in\mathcal E
\right\}.
\end{equation}

\end{enumerate}
\end{thm}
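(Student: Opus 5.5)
The plan is to build the libraries inductively. Lemma~\ref{lem:initial-library} gives the first library and Theorem~\ref{thm:pruning} gives each subsequent one. We then define $X$ as the intersection of the subshifts $X_k$ generated by the $\mathcal W_k$, and verify \ref{item:fusion-frequencies} and \ref{item:fusion-projections} from the balancing and the refined estimates.

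\textbf{Construction.} Fix $\alpha_0,\rho$ with $t_1<\alpha_0<\rho<1$. By Lemma~\ref{lem:initial-library} there is an $(\alpha_0,A_0)$-transverse library of size $\lfloor p^{\rho N}\rfloor$. Applying Theorem~\ref{thm:pruning} once with target $t_1$ (using $N$ large) yields $\mathcal W_1$ with $M_1=\lfloor p^{t_1N_1}\rfloor$. Inductively, suppose $\mathcal W_k$ is $(t_k',A_k)$-transverse with $t_k'>t_k$. Here $t_k'$ is the exponent $\beta$ from \eqref{eq:refined-pruning-main}, adjusted by the $N'/K$ term, which is absorbed because $K=\lfloor p^{t_kN_k}\rfloor$. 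Since $t_{k+1}<t_k$, condition \eqref{eq:pruning-budget} holds as soon as $\log_p(\Gamma A_k)/N_k<t_k-t_{k+1}=\delta_k$.

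The issue is that $A_k\leq C(A_{k-1}^2+N_k)$ by \eqref{eq: coarse-pruning-main}, so $A_k$ may be large. However, $\log A_k$ grows only like $\log N_k$ plus a recursive term, while $N_k$ grows super-exponentially. So we choose $q_{k-1}$, a large multiple of $M_{k-1}$, so that simultaneously:
\begin{itemize}
\item $N_k/N_{k+1}\leq p^{-2N_k}$, giving \ref{item:fusion-growth};
\item $\log_p(\Gamma A_k)/N_k<\delta_k/2$;
\item $N_k/M_k$ is tiny, for later use.
\end{itemize}
This gives \ref{item:fusion-cardinality} and \ref{item:fusion-growth}. I would carry along the refined bound of part (b) explicitly, in the form $\frac1{M_k}\#\{\cdots\}\leq B_kp^{-\gamma_k m}+\varepsilon_k$ with $\gamma_k>t_k$ and $\varepsilon_k=CN_k/M_k$.

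\textbf{Unique ergodicity and \ref{item:fusion-frequencies}.} Take $X=\bigcap X_k$, where $X_k$ is the orbit closure of bi-infinite (or one-sided) concatenations of words from $\mathcal W_k$; the $X_k$ are nested. Fix $m\leq N_k$ and $\mathcal E\subset\mathcal A_p^m$. Any point of $X$ is a concatenation of level-$j$ words for every $j\geq k$. Because of balancing, each level-$(j+1)$ word has $\mathcal E$-window frequency equal to the $Q_{k,m}$-average, up to windows straddling level-$k$ boundaries. Those windows have proportion $\leq m/N_k$ relative to positions, with an additional $O(m/N_k)$ discrepancy from the normalization $N_k-m+1$ versus $N_k$.

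Hence every long orbit segment has empirical $m$-block frequencies within $C m/N_k$ of $Q_{k,m}$. Letting $k\to\infty$ for fixed $m$, the frequencies converge uniformly. This gives unique ergodicity, and the same estimate applied to the limit gives \eqref{eq:fusion-frequencies}. Nonemptiness of $X$ is clear by compactness.

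\textbf{Intermediate scales \ref{item:fusion-projections}.} This is the main obstacle. Fix $N_{k-1}<m\leq N_k$. Under $Q_{k+1,m}$, a uniformly chosen window either lies inside one level-$k$ word or straddles exactly one level-$k$ boundary, splitting into pieces of lengths $\ell$ and $m-\ell$.

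\emph{Inside windows.} By balancing, the law equals the uniform average over $\mathcal W_k$ and over positions. The refined bound gives $B_kp^{-\gamma_km}+\varepsilon_k$. Since $m>N_{k-1}$ and $B_k$ is at most $p^{o(N_{k-1})}$, we get $B_kp^{-(\gamma_k-s)m}\lesssim 1$. Also $\varepsilon_k\leq CN_kp^{-t_kN_k}\leq Cp^{-sm}$ since $m\leq N_k$.

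\emph{Straddling windows.} Each boundary offset $\ell$ is weighted by roughly $1/N_k$, so their total mass is at most $m/N_k$. Part (c) bounds each offset by $C(B_kp^{-\gamma_k\ell}+\varepsilon_k)(B_kp^{-\gamma_k(m-\ell)}+\varepsilon_k)+CN_{k+1}/M_{k+1}$. The product of the main terms is $B_k^2p^{-\gamma_km}$, which is fine. The cross terms are $\varepsilon_kB_kp^{-\gamma_k\ell}$. If $\ell$ is small this is not $\lesssim p^{-sm}$ pointwise, but the factor $m/N_k$ combined with $\varepsilon_k\leq p^{-t_kN_k}$ still beats $p^{-sm}$. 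The $\varepsilon_k^2$ and $N_{k+1}/M_{k+1}$ terms are super-small.

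For small $\ell$ I would alternatively bound the straddling window by the single piece of length $m-\ell$, or symmetrically by the longer piece, which has length $\geq m/2$. This works provided the exponent surplus covers the loss $p^{\gamma_k m/2}$. That loss is the delicate point: the product estimate must be used, with constants $B_k$ kept subexponential in $N_{k-1}$. Ensuring that $B_k$ can be controlled this way through the recursion is where I expect the real work to lie.
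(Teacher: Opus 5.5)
Your outline is the paper's proof. You build an initial library and iterate pruning with targets $t_k$. You get unique ergodicity and (iv) from balancing, since boundary windows have proportion at most $m/N_k$. You get (iii) by splitting $Q_{k+1,m}$ into two kinds of windows. Windows inside one level-$k$ word are controlled by the refined bound from Theorem~\ref{thm:pruning}(b). Windows crossing one level-$k$ boundary are controlled by the product bound of part (c), with weight about $1/N_k$ per offset $\ell$.

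\textbf{The gap.} You leave open the step you call ``the real work'': controlling the constants through the recursion. This is what gives (iii) a single constant $C_{\mathrm{fus}}$, so it must be written into the choice of $q_k$ as explicit conditions. Saying $B_k=p^{o(N_{k-1})}$ is not enough, because the usable surplus $t_k-s=(1-s)2^{-k}$ shrinks with $k$. The paper imposes two conditions.
\begin{enumerate}
\item $N_{k+1}\geq A_k^2$. Then \eqref{eq: coarse-pruning-main} gives $A_{k+1}\leq C_{\mathrm{pr}}(A_k^2+N_{k+1})\leq 2C_{\mathrm{pr}}N_{k+1}$. So the constant in the refined bound for $\mathcal W_k$ (your $B_k$, the paper's $B_{k-1}=C_{\mathrm{pr}}A_{k-1}^2$) is $O(N_{k-1}^2)$, and the squaring in the recursion never compounds.
\item The absorption inequality $C_{\mathrm{pr}}^2C_*^4N_k^4p^{-(t_{k+1}-s)N_k}\leq1$, with $C_*=2C_{\mathrm{pr}}$. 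This can be imposed because $t_{k+1}$ is fixed before $N_k$ is chosen.
\end{enumerate}

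With these, for $N_{k-1}<m\leq N_k$:
\begin{itemize}
\item Both $B_{k-1}p^{-\beta_{k-1}m}$ and $B_{k-1}^2p^{-\beta_{k-1}m}$ are at most $p^{-sm}$, since $\beta_{k-1}>t_k$ and condition 2 holds at index $k-1$.
\item $\varepsilon_k\leq 2C_{\mathrm{pr}}N_kp^{-t_kN_k}\leq p^{-sN_k}\leq p^{-sm}$.
\item The cross terms, summed over $\ell$ with weight $1/N_k$, are at most $\frac{2}{p^s-1}\frac{B_{k-1}}{N_k}\varepsilon_k\leq\frac{2C_{\mathrm{pr}}}{p^s-1}\varepsilon_k$, by condition 1.
\end{itemize}
Nothing delicate remains. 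In fact the cross terms are $\lesssim p^{-sm}$ even pointwise in $\ell$, since $\varepsilon_kB_{k-1}\lesssim N_k^3p^{-t_kN_k}$, contrary to your remark. Your fallback of bounding a straddling window by its longer piece is unnecessary, and it would not work anyway, as it loses about half the exponent.

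\textbf{A wrong intermediate claim.} $\mathcal W_k$ cannot be $(t_k',A_k)$-transverse with $t_k'>t_k$ and $A_k$ subexponential. Take $m=N_k$ and an interval containing $e\cdot X_{0,N_k}(w)$ for a single word $w$. This gives $p^{-t_kN_k}\leq 1/M_k\leq A_kp^{-t_k'N_k}$, so $A_k\geq p^{(t_k'-t_k)N_k}$. The term $N'/K$ in \eqref{eq:refined-pruning-main} is absorbed only at exponent $t_k$. The correct inductive hypothesis is $(t_k,A_k)$-transversality with $A_k\leq 2C_{\mathrm{pr}}N_k$, which is what your budget condition $\log_p(\Gamma A_k)/N_k<\delta_k/2$ actually uses. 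The improved exponent $\beta>t_k$ survives only in the two-term form $Bp^{-\beta m}+\varepsilon$. You do carry that form along, so this slip does not damage the rest of the argument.
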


\begin{proof}
We divide the proof into four steps. Recall \eqref{eq:tk}. Let
$C_{\mathrm{pr}}\geq1$ be the constant $C$ in
\eqref{eq: coarse-pruning-main}, and set
$C_*:=2C_{\mathrm{pr}}.$
We construct the libraries so that
\begin{equation}\label{eq:aux-transverse}
\mathcal W_k \text{ is }(t_k,A_k)\text{-transverse},
\, \text{ where }
A_k\leq C_*N_k,
\end{equation}
and, for the constant $\Gamma$ from Theorem~\ref{thm:pruning},
\begin{equation}\label{eq:aux-budget}
\frac{\log_p(\Gamma A_k)}{N_k}
\leq
\frac{\delta_k}{4}.
\end{equation}
We also choose the block lengths $N_k$ sufficiently rapidly increasing so that
\begin{equation}\label{eq:aux-absorption}
C_{\mathrm{pr}}^2C_*^4
N_k^4p^{-(t_{k+1}-s)N_k}
\leq1
\end{equation}
for every $k$.

\medskip

\noindent
\textbf{Step 1: construction of the libraries.}
Choose $\alpha_0,\rho_0$ such that
$t_1<\alpha_0<\rho_0<1.$
By Lemma~\ref{lem:initial-library}, for every sufficiently large $N_0$
there exists an $(\alpha_0,A_0)$-transverse library
$\mathcal W_0\subset\mathcal A_p^{N_0},$ with
$|\mathcal W_0|=\lfloor p^{\rho_0N_0}\rfloor.$
Choose $N_0$ large enough that
\[
\frac{\log_p(\Gamma A_0)}{N_0}
<
\alpha_0-t_1, \text{ so that }
t_1<
\alpha_0-\frac{\log_p(\Gamma A_0)}{N_0}.
\]
Then Theorem~\ref{thm:pruning} applies with target exponent $t_1$.

Choose the concatenation parameter $q_0$, a multiple of
$|\mathcal W_0|$, so large that, with
$N_1:=q_0N_0,$
the resulting library $\mathcal W_1\subset\mathcal A_p^{N_1}$
satisfies
$M_1:=|\mathcal W_1|
=
\lfloor p^{t_1N_1}\rfloor$
and \eqref{eq:aux-budget}--\eqref{eq:aux-absorption} for $k=1$.
By \eqref{eq: coarse-pruning-main}, after increasing $q_0$ if necessary,
\eqref{eq:aux-transverse} also holds for $k=1$.

Suppose now that $\mathcal W_k$ has been constructed. By
\eqref{eq:aux-budget},
\[
\frac{\log_p(\Gamma A_k)}{N_k}
\leq \frac{\delta_k}{4}
<\delta_k,\,
\text{ and therefore }
t_{k+1}
=
t_k-\delta_k
<
t_k-\frac{\log_p(\Gamma A_k)}{N_k}.
\]
Then Theorem~\ref{thm:pruning} applies to $\mathcal W_k$ with target
exponent $t_{k+1}$.

Choose the concatenation parameter $q_k$, a sufficiently large multiple
of $M_k$, and set
$N_{k+1}:=q_kN_k.$
We require
\begin{equation}\label{eq:aux-growth}
\frac{N_k}{N_{k+1}}
\leq
p^{-2N_k},
\end{equation}
as well as
\[
N_{k+1}\geq A_k^2,
\quad
\frac{\log_p(\Gamma C_*N_{k+1})}{N_{k+1}}
\leq
\frac{\delta_{k+1}}{4}, \text{ and }
C_{\mathrm{pr}}^2C_*^4N_{k+1}^4
p^{-(t_{k+2}-s)N_{k+1}}
\leq1.
\]
All these conditions hold once $q_k$ is sufficiently large.

Theorem~\ref{thm:pruning} then gives a library
$\mathcal W_{k+1}\subset\mathcal A_p^{N_{k+1}}$
such that
$M_{k+1}:=|\mathcal W_{k+1}|
=
\lfloor p^{t_{k+1}N_{k+1}}\rfloor.$
Every word in $\mathcal W_{k+1}$ is a concatenation of $q_k$ words from
$\mathcal W_k$, and every word in $\mathcal W_k$ occurs exactly
$q_k/M_k$ times. This proves \ref{item:fusion-cardinality}, while
\eqref{eq:aux-growth} gives \ref{item:fusion-growth}.

By \eqref{eq: coarse-pruning-main},
$\mathcal W_{k+1}$ is $(t_{k+1},A_{k+1})$-transverse for some
$A_{k+1}
\leq
C_{\mathrm{pr}}(A_k^2+N_{k+1}).$
Since $N_{k+1}\geq A_k^2$ and $C_*=2C_{\mathrm{pr}}$, it follows that
$A_{k+1}\leq C_*N_{k+1}.$
Hence, by the requirements imposed on $N_{k+1}$ in \eqref{eq:aux-growth} and the equation following it,
\eqref{eq:aux-transverse}--\eqref{eq:aux-absorption} hold with
$k$ replaced by $k+1$. This completes the inductive construction of the
libraries $\mathcal W_k$.

We shall need two refined consequences of the pruning construction.
First, applying Theorem~\ref{thm:pruning}\textup{(b)} at stage $k$,
for every $k\geq1$ there exists
$\beta_k>t_{k+1}$
such that, setting
\[
B_k:=C_{\mathrm{pr}}A_k^2,
\qquad
\varepsilon_{k+1}
:=
C_{\mathrm{pr}}\frac{N_{k+1}}{M_{k+1}},
\]
we have, for every $1\leq m\leq N_{k+1}$,
\begin{equation}\label{eq:aux-refined}
\sup_{\substack{
0\leq a\leq N_{k+1}-m\\
e\in S^1\\
I\subset\mathbb R,\ |I|\leq C_0p^{-m}
}}
\frac{1}{M_{k+1}}
\#\left\{
W\in\mathcal W_{k+1}:
e\cdot X_{a,m}(W)\in I
\right\}
\leq
B_kp^{-\beta_km}+\varepsilon_{k+1}.
\end{equation}
Moreover, since
$M_{k+1}=\lfloor p^{t_{k+1}N_{k+1}}\rfloor$,
\begin{equation}\label{eq:aux-epsilon}
\varepsilon_{k+1}
\leq
2C_{\mathrm{pr}}N_{k+1}
p^{-t_{k+1}N_{k+1}}.
\end{equation}

Second, for $k\geq2$, apply
Theorem~\ref{thm:pruning}\textup{(c)} at stage $k$, using
\eqref{eq:aux-refined} from stage $k-1$ as input. Then, for every
$1\leq\ell<m\leq N_k$,
\begin{equation}\label{eq:aux-boundary}
\begin{split}
\sup_{\substack{
1\leq r<q_k\\
e\in S^1\\
I\subset\mathbb R,\ |I|\leq C_0p^{-m}
}}
\frac{1}{M_{k+1}}
\#\Bigl\{
W\in\mathcal W_{k+1}:
e\cdot X_{rN_k-\ell,m}(W)\in I
\Bigr\}
\leq C_{\mathrm{pr}}\Bigl[
&
\bigl(B_{k-1}p^{-\beta_{k-1}\ell}+\varepsilon_k\bigr)
\\
&\cdot
\bigl(B_{k-1}p^{-\beta_{k-1}(m-\ell)}
+\varepsilon_k\bigr)
+
\frac{N_{k+1}}{M_{k+1}}
\Bigr].
\end{split}
\end{equation}

\medskip
\noindent
\medskip
\noindent
\textbf{Step 2: comparison with uniform internal windows.}

Fix $k\geq1$, $1\leq m\leq N_k$, and
$\mathcal E\subset\mathcal A_p^m$. Let $V\in\mathcal W_{k+1}$ and
write
$V=W_1\cdots W_{q_k},
\,
W_j\in\mathcal W_k.$
By item~\ref{item:fusion-cardinality}, every
$W\in\mathcal W_k$ occurs exactly $q_k/M_k$ times among
$W_1,\ldots,W_{q_k}$. Hence, by the definition of $Q_{k,m}$,
\begin{align*}
&\#\Bigl\{
0\leq a\leq N_{k+1}-m:
(V_{a+1},\ldots,V_{a+m})\in\mathcal E,\\
&\hspace{40mm}
(V_{a+1},\ldots,V_{a+m})
\text{ is contained in a single }W_j
\Bigr\}
\\
&\qquad=
\frac{q_k}{M_k}
\sum_{W\in\mathcal W_k}
\#\Bigl\{
0\leq a\leq N_k-m:
(W_{a+1},\ldots,W_{a+m})\in\mathcal E
\Bigr\}
\\
&\qquad=
q_k(N_k-m+1)Q_{k,m}(\mathcal E).
\end{align*}

The remaining $m$-windows cross a boundary between two consecutive
level-$k$ blocks. Since $m\leq N_k$, each such window crosses exactly
one boundary. For each of the $q_k-1$ boundaries there are $m-1$
possible crossing windows, so their total number is
$(q_k-1)(m-1).$
Thus
\[
N_{k+1}-m+1
=
q_k(N_k-m+1)+(q_k-1)(m-1),
\]
where the first term counts the internal windows and the second the
boundary-crossing ones. By the preceding computation, the internal $m$-windows in $V$ have
empirical distribution exactly $Q_{k,m}$. The boundary-crossing windows may have arbitrary
distribution, but they constitute a proportion at most
\[
\frac{(q_k-1)(m-1)}{N_{k+1}-m+1}
\leq
\frac{m}{N_k}
\]
of all $m$-windows. Consequently, uniformly in
$V\in\mathcal W_{k+1}$,
\begin{equation}\label{eq:block-distribution}
\sup_{\mathcal E\subset\mathcal A_p^m}
\left|
\frac{1}{N_{k+1}-m+1}
\#\left\{
0\leq a\leq N_{k+1}-m:
(V_{a+1},\ldots,V_{a+m})\in\mathcal E
\right\}
-
Q_{k,m}(\mathcal E)
\right|
\leq
\frac{m}{N_k}.
\end{equation}

\medskip

\noindent
\textbf{Step 3: the limiting subshift and unique ergodicity.}
For each $k$, let $\mathcal C_k$ be the set of all one-sided infinite
concatenations of words from $\mathcal W_k$, and set
$X_k:=\bigcup_{a=0}^{N_k-1}\sigma^a\mathcal C_k.$ 
Each $X_k$ is nonempty and compact. Since every word in
$\mathcal W_k$ has length $N_k$,
$\sigma^{N_k}\mathcal C_k=\mathcal C_k,$
and hence $X_k$ is $\sigma$-invariant.
Moreover, $X_{k+1}\subset X_k$. Indeed, if
$0\leq a<N_{k+1}$, write
$a=jN_k+r,
\,
0\leq r<N_k.$
Since every word in $\mathcal W_{k+1}$ is a concatenation of words
from $\mathcal W_k$,
$\sigma^a\mathcal C_{k+1}
\subset
\sigma^r\mathcal C_k
\subset X_k.$
Thus the $X_k$ form a nested sequence of nonempty compact
$\sigma$-invariant sets. Hence
\[
X:=\bigcap_{k\geq1}X_k
\]
is a nonempty compact $\sigma$-invariant subshift.

We now prove that $X$ is uniquely ergodic. Let $\eta$ be an ergodic
$\sigma$-invariant probability measure on $X$, and let
$\omega\in X$ be generic for $\eta$. Fix $k\geq1$,
$1\leq m\leq N_k$, and
$\mathcal E\subset\mathcal A_p^m$. Since $\omega\in X_{k+1}$,
after deleting an initial segment of length less than $N_{k+1}$,
the remaining sequence is a concatenation of words from
$\mathcal W_{k+1}$.

Apply \eqref{eq:block-distribution} to each of these level-$(k+1)$
words. The $m$-windows crossing a boundary between two successive
level-$(k+1)$ words have asymptotic proportion at most
$m/N_{k+1}$. Letting the length of the empirical average tend to
infinity therefore gives
\[
\left|
\eta([\mathcal E])-Q_{k,m}(\mathcal E)
\right|
\leq
\frac{m}{N_k}+\frac{m}{N_{k+1}}
\leq
\frac{2m}{N_k}.
\]

Now fix $m$ and $\mathcal E\subset\mathcal A_p^m$ and let
$k\to\infty$. The right-hand side tends to zero, so
$\eta([\mathcal E])$ is independent of the choice of the ergodic
invariant measure $\eta$. Hence all ergodic $\sigma$-invariant
probability measures on $X$ coincide. So, 
$X$ is uniquely ergodic; denote its unique invariant probability
measure by $\nu$.

\begin{equation}\label{eq:fusion-frequency-bound}
\left|
\nu([\mathcal E])-Q_{k,m}(\mathcal E)
\right|
\leq
2\frac{m}{N_k}.
\end{equation}

\medskip

\noindent
\textbf{Step 4: projected non-concentration at intermediate scales.}
It remains to prove \ref{item:fusion-projections}. Fix $k\geq2$ and
$N_{k-1}<m\leq N_k$. Let $e\in S^1$, let $I\subset\mathbb R$ satisfy
$|I|\leq C_0p^{-m}$, and set
\[
\mathcal E
:=
\left\{
u\in\mathcal A_p^m:
e\cdot X_{0,m}(u)\in I
\right\}.
\]
We estimate $Q_{k+1,m}(\mathcal E)$.

Write each $V\in\mathcal W_{k+1}$ as
$V=W_1\cdots W_{q_k},
\, W_j\in\mathcal W_k.$
In the definition of $Q_{k+1,m}$, we average over
$V\in\mathcal W_{k+1}$ and starting positions
$0\leq a\leq N_{k+1}-m$. Since $m\leq N_k$, the corresponding
$m$-block
$(V_{a+1},\ldots,V_{a+m})$
either lies entirely inside one of the level-$k$ blocks $W_j$, or
crosses a single boundary between two consecutive level-$k$ blocks.
We estimate these two contributions separately.

\smallskip
\noindent
\emph{Internal windows.}
Fix $0\leq a\leq N_k-m$. For $0\leq r<q_k$, the internal $m$-window
starting at $rN_k+a$ satisfies
\[
X_{rN_k+a,m}(V)=X_{a,m}(W_{r+1}).
\]
By the balanced-concatenation property from Step~1, averaging over
$V\in\mathcal W_{k+1}$ and $0\leq r<q_k$ makes the block $W_{r+1}$
uniformly distributed over $\mathcal W_k$. Hence
\[
\begin{split}
&\frac{1}{M_{k+1}q_k}
\sum_{V\in\mathcal W_{k+1}}
\#\left\{
0\leq r<q_k:
e\cdot X_{rN_k+a,m}(V)\in I
\right\}
\\
&\qquad=
\frac{1}{M_k}
\#\left\{
W\in\mathcal W_k:
e\cdot X_{a,m}(W)\in I
\right\}
\\
&\qquad\leq
B_{k-1}p^{-\beta_{k-1}m}+\varepsilon_k,
\end{split}
\]
where in the last line we used \eqref{eq:aux-refined} at stage $k-1$.

Summing over the $N_k-m+1$ possible internal offsets $a$ and
normalizing by the total number $N_{k+1}-m+1$ of $m$-windows, the
 contribution to $Q_{k+1,m}(\mathcal E)$ from internal windows is therefore at most
\[
\frac{q_k(N_k-m+1)}{N_{k+1}-m+1}
\left(
B_{k-1}p^{-\beta_{k-1}m}+\varepsilon_k
\right)
\leq
B_{k-1}p^{-\beta_{k-1}m}+\varepsilon_k.
\]

We now bound the two terms. By the definition of $B_{k-1}$ and
\eqref{eq:aux-transverse},
\[
B_{k-1}
=
C_{\mathrm{pr}}A_{k-1}^2
\leq
C_{\mathrm{pr}}C_*^2N_{k-1}^2,
\qquad
\beta_{k-1}>t_k.
\]
Since $m>N_{k-1}$,
\[
\begin{split}
B_{k-1}p^{-\beta_{k-1}m}
&\leq
C_{\mathrm{pr}}C_*^2N_{k-1}^2
p^{-(t_k-s)N_{k-1}}p^{-sm}
\\
&\leq
p^{-sm},
\end{split}
\]
where the last inequality follows from \eqref{eq:aux-absorption} with
$k-1$ in place of $k$.
Similarly, by \eqref{eq:aux-epsilon},
\[
\begin{split}
\varepsilon_k
&\leq
2C_{\mathrm{pr}}N_kp^{-t_kN_k}
\\
&=
2C_{\mathrm{pr}}N_k
p^{-(t_k-s)N_k}p^{-sN_k}
\\
&\leq
p^{-sN_k}
\leq
p^{-sm},
\end{split}
\]
where we used $t_k>t_{k+1}$, \eqref{eq:aux-absorption}, and
$m\leq N_k$. We conclude that
\begin{equation}\label{eq:internal-bound}
\text{\rm contribution to }Q_{k+1,m}(\mathcal E)
\text{ from internal windows}
\leq
2p^{-sm}.
\end{equation}

\smallskip
\noindent
\emph{Boundary windows.}
For $1\leq r<q_k$ and $1\leq\ell<m$, consider the $m$-window
consisting of the last $\ell$ digits of $W_r$ followed by the first
$m-\ell$ digits of $W_{r+1}$. Its starting position in $V$ is
$rN_k-\ell$. Hence \eqref{eq:aux-boundary} gives
\[
\begin{split}
&\frac{1}{M_{k+1}}
\#\left\{
V\in\mathcal W_{k+1}:
e\cdot X_{rN_k-\ell,m}(V)\in I
\right\}
\\
&\qquad\leq
C_{\mathrm{pr}}\left[
\bigl(B_{k-1}p^{-\beta_{k-1}\ell}+\varepsilon_k\bigr)
\bigl(B_{k-1}p^{-\beta_{k-1}(m-\ell)}+\varepsilon_k\bigr)
+
\frac{N_{k+1}}{M_{k+1}}
\right].
\end{split}
\]

Summing over $1\leq r<q_k$ and $1\leq\ell<m$, and normalizing by the
total number $N_{k+1}-m+1$ of starting positions, gives the boundary
contribution to $Q_{k+1,m}(\mathcal E)$. Since $m\leq N_k$,
\[
\frac{q_k-1}{N_{k+1}-m+1}\leq\frac1{N_k},
\qquad
\frac{(q_k-1)(m-1)}{N_{k+1}-m+1}\leq1.
\]
Therefore
\begin{equation}\label{eq:boundary-total}
\begin{split}
\text{\rm boundary contribution to }Q_{k+1,m}(\mathcal E)
\leq\;&
\frac{C_{\mathrm{pr}}}{N_k}
\sum_{\ell=1}^{m-1}
\bigl(B_{k-1}p^{-\beta_{k-1}\ell}+\varepsilon_k\bigr)
\\
&\hspace{8mm}\cdot
\bigl(B_{k-1}p^{-\beta_{k-1}(m-\ell)}
+\varepsilon_k\bigr)
+
C_{\mathrm{pr}}\frac{N_{k+1}}{M_{k+1}}.
\end{split}
\end{equation}

Since $\beta_{k-1}>t_k>s$,
\[
\frac1{N_k}\sum_{\ell=1}^{m-1}
p^{-\beta_{k-1}\ell}
p^{-\beta_{k-1}(m-\ell)}
=
\frac{m-1}{N_k}p^{-\beta_{k-1}m}
\leq
p^{-\beta_{k-1}m},
\]
and
\[
\sum_{\ell=1}^{m-1}p^{-\beta_{k-1}\ell}\leq \sum_{j=1}^{\infty}p^{-sj}
=\frac{1}{p^s-1}.
\]
Expanding the product in \eqref{eq:boundary-total} therefore gives
\[
\begin{split}
\text{\rm boundary contribution}
\leq C_{\mathrm{pr}}\Bigl(
&B_{k-1}^2p^{-\beta_{k-1}m}
+
\frac{2}{p^s-1}\frac{B_{k-1}}{N_k}\varepsilon_k
\\
&+
\varepsilon_k^2
+
\frac{N_{k+1}}{M_{k+1}}
\Bigr).
\end{split}
\]

We estimate these four terms separately. Since
$B_{k-1}
=
C_{\mathrm{pr}}A_{k-1}^2
\leq
C_{\mathrm{pr}}C_*^2N_{k-1}^2$
and $\beta_{k-1}>t_k$, while $m>N_{k-1}$,
\[
\begin{split}
B_{k-1}^2p^{-\beta_{k-1}m}
&\leq
C_{\mathrm{pr}}^2C_*^4N_{k-1}^4
p^{-(t_k-s)N_{k-1}}p^{-sm}
\\
&\leq
p^{-sm}
\end{split}
\]
by \eqref{eq:aux-absorption} with $k-1$ in place of $k$.

Moreover, the condition $N_k\geq A_{k-1}^2$ imposed in Step~1 gives
$\frac{B_{k-1}}{N_k}
=
C_{\mathrm{pr}}\frac{A_{k-1}^2}{N_k}
\leq C_{\mathrm{pr}}.$
As in the internal-window estimate, \eqref{eq:aux-epsilon} and
\eqref{eq:aux-absorption} give
$\varepsilon_k\leq p^{-sN_k}\leq p^{-sm}.$
In particular, $\varepsilon_k^2\leq p^{-sm}$. Finally, by the
definition of $\varepsilon_{k+1}$,
$C_{\mathrm{pr}}\frac{N_{k+1}}{M_{k+1}}
=
\varepsilon_{k+1}
\leq
p^{-sN_{k+1}}
\leq
p^{-sm}.$

Thus, setting
$C_{\mathrm{bd}}
:=
1+2C_{\mathrm{pr}}+\frac{2}{p^s-1}C_{\mathrm{pr}}^2,$
we obtain
\begin{equation}\label{eq:boundary-final}
\text{\rm boundary contribution to }Q_{k+1,m}(\mathcal E)
\leq
C_{\mathrm{bd}}p^{-sm}.
\end{equation}
$$ $$

Combining \eqref{eq:internal-bound} and
\eqref{eq:boundary-final},
$Q_{k+1,m}(\mathcal E)
\leq
(2+C_{\mathrm{bd}})p^{-sm}.$ We can now 
define the constant from the statement of the Theorem
\[
C_{\mathrm{fus}}
:=
2+C_{\mathrm{bd}}
=
3+2C_{\mathrm{pr}}+\frac{2}{p^s-1}C_{\mathrm{pr}}^2.
\]
Since $e\in S^1$, $I\subset\mathbb R$, and
$N_{k-1}<m\leq N_k$ were arbitrary, this proves
\eqref{eq:fusion-projections} with constant $C_{\mathrm{fus}}$.

Finally, $C_{\mathrm{fus}}\geq2$, so
\eqref{eq:fusion-frequency-bound} from Step~3 also gives
\eqref{eq:fusion-frequencies} with the same constant
$C_{\mathrm{fus}}$. Thus, taking the constant in
Theorem~\ref{thm:fusion} to be $C_{\mathrm{fus}}$, all the stated
properties hold, and the proof is complete.
\end{proof}

\subsection{Tube occupancy and entropy of $\nu$}\label{subsec:projection-entropy}
Recall that, for $\omega=(\omega_n)_{n\geq1}\in\mathcal A_p^{\mathbb N}$,
\[
\Pi_m(\omega):=\sum_{n=1}^m p^{-n}\omega_n
\]
denotes the $m$-digit truncation of $\Pi$.
\begin{cor}\label{cor:fusion-entropy}
Let $\nu$ be the invariant measure given by
Theorem~\ref{thm:fusion}, and set
$C_{\mathrm{occ}}
:=
\max\{2C_{\mathrm{fus}},p^{sN_1}\}.$
Then
\begin{equation}\label{eq:nu-entropy}
h_\nu(\sigma)=s\log p.
\end{equation}
Moreover, for every $m\geq1$, every $e\in S^1$, and every interval
$I\subset\mathbb R$ with $|I|\leq C_0p^{-m}$,
\begin{equation}\label{eq:truncated-occupancy}
\nu\left(
\left\{
\omega:
e\cdot\Pi_m(\omega)\in I
\right\}
\right)
\leq
C_{\mathrm{occ}}p^{-sm}.
\end{equation}
Therefore, for every $e\in S^1$ and $m\geq1$,
\begin{equation}\label{eq:projected-entropy}
\mathrm H\left(
(P_e\circ\Pi_m)_*\nu,\mathcal D_m
\right)
\geq
sm\log p-\log C_{\mathrm{occ}}.
\end{equation}
\end{cor}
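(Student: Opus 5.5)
We prove \eqref{eq:truncated-occupancy} first, then the entropy lower bound \eqref{eq:projected-entropy}, and finally \eqref{eq:nu-entropy}.

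\emph{Occupancy bound.} Fix $m\geq1$, $e\in S^1$ and an interval $I$ with $|I|\leq C_0p^{-m}$, and set $\mathcal E:=\{u\in\mathcal A_p^m: e\cdot X_{0,m}(u)\in I\}$. Then $\{\omega: e\cdot\Pi_m(\omega)\in I\}=[\mathcal E]$. Since $\nu$ is supported on $X$, this is the set appearing in \eqref{eq:fusion-frequencies}. We split according to the size of $m$.
\begin{itemize}
\item If $m\leq N_1$, the trivial bound $\nu([\mathcal E])\leq 1\leq p^{sN_1}p^{-sm}$ suffices.
\item If $m>N_1$, choose $k\geq2$ with $N_{k-1}<m\leq N_k$. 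Then $m\leq N_{k+1}$, so \eqref{eq:fusion-frequencies} at level $k+1$ and \eqref{eq:fusion-projections} give
\[
\nu([\mathcal E])\leq Q_{k+1,m}(\mathcal E)+C_{\mathrm{fus}}\frac{m}{N_{k+1}}\leq C_{\mathrm{fus}}p^{-sm}+C_{\mathrm{fus}}\frac{N_k}{N_{k+1}}.
\]
By \eqref{eq:fusion-growth}, $N_k/N_{k+1}\leq p^{-2N_k}\leq p^{-sm}$, because $m\leq N_k$ and $s<2$. Hence $\nu([\mathcal E])\leq 2C_{\mathrm{fus}}p^{-sm}$.
\end{itemize}
In both cases we obtain \eqref{eq:truncated-occupancy} with $C_{\mathrm{occ}}$.

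\emph{Projected entropy.} Each atom of $\mathcal D_m$ is an interval of length $p^{-m}\leq C_0p^{-m}$. So by \eqref{eq:truncated-occupancy}, every atom has $(P_e\circ\Pi_m)_*\nu$-mass at most $C_{\mathrm{occ}}p^{-sm}$. For any probability vector with all masses at most $\delta$, the entropy is
\[
-\sum \theta_i\log\theta_i\geq -\log\delta .
\]
Taking $\delta=C_{\mathrm{occ}}p^{-sm}$ gives \eqref{eq:projected-entropy}.

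\emph{Entropy of $\nu$, upper bound.} Let $\mathcal P$ be the time-zero partition by first symbol. Then $\mathrm H(\nu,\bigvee_{i<m}\sigma^{-i}\mathcal P)$ is the entropy of the $m$-block distribution. We take $m=N_k$ and use \eqref{eq:fusion-frequencies} at level $k+1$: the block law of $\nu$ is within total variation $C_{\mathrm{fus}}N_k/N_{k+1}\leq C_{\mathrm{fus}}p^{-2N_k}$ of $Q_{k+1,N_k}$.

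The measure $Q_{k+1,N_k}$ is supported on $N_k$-windows of concatenations of words of $\mathcal W_k$. Each such window is a suffix of one $\mathcal W_k$-word followed by a prefix of another, determined by a split point and two words. Hence the support has at most $N_kM_k^2\leq N_kp^{2t_kN_k}$ elements. This bound is too crude, since it gives exponent $2t_k$.

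The better route is to use block lengths $m=jN_k$ with $j$ large. Now $m$ is no longer $\leq N_k$, so \eqref{eq:fusion-frequencies} does not apply directly. Instead we argue directly, as in Step~3 of the proof of Theorem~\ref{thm:fusion}. Every point of $X\subset X_k$ is a shift of a concatenation of $\mathcal W_k$-words. Therefore every $jN_k$-block of such a point is determined by an offset ($<N_k$ choices) and at most $j+1$ words of $\mathcal W_k$. This gives at most $N_kM_k^{j+1}$ admissible blocks, so
\[
h_\nu(\sigma)\leq \lim_j \frac{\log(N_kM_k^{j+1})}{jN_k}=\frac{\log M_k}{N_k}\leq t_k\log p .
\]
Letting $k\to\infty$ yields $h_\nu(\sigma)\leq s\log p$.

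\emph{Entropy of $\nu$, lower bound.} The partition $\Pi_m^{-1}$ of the $p^{-m}$-grid points is refined by the $m$-block partition. Since $\mathrm H$ is monotone under refinement, \eqref{eq:projected-entropy} gives
\[
\mathrm H\Bigl(\nu,\bigvee_{i<m}\sigma^{-i}\mathcal P\Bigr)\geq \mathrm H\bigl((P_e\circ\Pi_m)_*\nu,\mathcal D_m\bigr)\geq sm\log p-\log C_{\mathrm{occ}}.
\]
Dividing by $m$ and letting $m\to\infty$ gives $h_\nu(\sigma)\geq s\log p$. Together with the upper bound, this proves \eqref{eq:nu-entropy}.

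The main obstacle is the upper bound on entropy. The window-counting must use many consecutive $\mathcal W_k$-words rather than a single $N_k$-window, to avoid losing a factor of $2$ in the exponent. The lower bound and the occupancy bound are direct consequences of Theorem~\ref{thm:fusion}.
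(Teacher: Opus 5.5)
Your proposal is correct and follows the paper's proof essentially step for step. The four steps match: the occupancy bound comes from items (iii), (iv) (at level $k+1$) and (ii) of Theorem~\ref{thm:fusion}, with the trivial bound for $m\le N_1$; then the max-mass lower bound for entropy; then the coarsening argument giving $h_\nu(\sigma)\geq s\log p$; and finally the count of at most $N_kM_k^{j+1}$ admissible $jN_k$-blocks in $X\subset X_k$ giving $h_\nu(\sigma)\leq s\log p$ (your abandoned detour through $Q_{k+1,N_k}$ is unnecessary but harmless).
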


\begin{proof}
We first prove \eqref{eq:truncated-occupancy}. Let $m>N_1$, and choose
$k\geq2$ so that
$N_{k-1}<m\leq N_k.$
Fix $e\in S^1$ and an interval $I\subset\mathbb R$ with
$|I|\leq C_0p^{-m}$, and set
\[
\mathcal E
:=
\left\{
u\in\mathcal A_p^m:
e\cdot X_{0,m}(u)\in I
\right\}.
\]
Since $X_{0,m}(u)=\Pi_m(u)$,
$\nu([\mathcal E])
=
\nu\left(
\left\{
\omega:
e\cdot\Pi_m(\omega)\in I
\right\}
\right).$
By \ref{item:fusion-projections},
$Q_{k+1,m}(\mathcal E)
\leq
C_{\mathrm{fus}}p^{-sm},$
while \ref{item:fusion-frequencies} gives
\[
\left|
\nu([\mathcal E])-Q_{k+1,m}(\mathcal E)
\right|
\leq
C_{\mathrm{fus}}\frac{m}{N_{k+1}}.
\]
Using $m\leq N_k$ and \ref{item:fusion-growth},
$\frac{m}{N_{k+1}}
\leq
\frac{N_k}{N_{k+1}}
\leq
p^{-2N_k}
\leq
p^{-2m}
\leq
p^{-sm}.$
Therefore
\[
\nu([\mathcal E])
\leq
2C_{\mathrm{fus}}p^{-sm}
\leq
C_{\mathrm{occ}}p^{-sm}.
\]

If $1\leq m\leq N_1$, then simply
$\nu([\mathcal E])
\leq1
\leq
p^{sN_1}p^{-sm}
\leq
C_{\mathrm{occ}}p^{-sm}.$
This proves \eqref{eq:truncated-occupancy} for every $m\geq1$.

Since every $J\in\mathcal D_m$ has length $p^{-m}$,
\eqref{eq:truncated-occupancy} gives
\[
\max_{J\in\mathcal D_m}
(P_e\circ\Pi_m)_*\nu(J)
\leq
C_{\mathrm{occ}}p^{-sm}.
\]
Using the elementary bound
$\mathrm H(\theta,\mathcal Q)
\geq
-\log\max_{Q\in\mathcal Q}\theta(Q),$
we obtain
\[
\mathrm H\left(
(P_e\circ\Pi_m)_*\nu,\mathcal D_m
\right)
\geq
sm\log p-\log C_{\mathrm{occ}},
\]
which proves \eqref{eq:projected-entropy}.

It remains to prove \eqref{eq:nu-entropy}. Let $\mathcal P_m$ be the
partition of $\mathcal A_p^{\mathbb N}$ into cylinders of length $m$.
Since $P_e\circ\Pi_m$ is constant on every atom of $\mathcal P_m$,
the partition
$(P_e\circ\Pi_m)^{-1}\mathcal D_m$
is a coarsening of $\mathcal P_m$. Hence
\[
\mathrm H(\nu,\mathcal P_m)
\geq
\mathrm H\left(
\nu,(P_e\circ\Pi_m)^{-1}\mathcal D_m
\right)
=
\mathrm H\left(
(P_e\circ\Pi_m)_*\nu,\mathcal D_m
\right).
\]
By \eqref{eq:projected-entropy},
$\mathrm H(\nu,\mathcal P_m)
\geq
sm\log p-\log C_{\mathrm{occ}}.$
Dividing by $m$ and letting $m\to\infty$ gives
$h_\nu(\sigma)\geq s\log p.$

For the reverse inequality, fix $k$. Since $X\subset X_k$, every
length-$n$ word occurring in $X$ is contained, up to its initial phase,
in a concatenation of at most
$\left\lceil\frac{n}{N_k}\right\rceil+2$
words from $\mathcal W_k$. Thus, if $p_X(n)$ denotes the number of
length-$n$ words occurring in $X$, then
$p_X(n)
\leq
N_kM_k^{\lceil n/N_k\rceil+2}.$
It follows that
\[
h_\nu(\sigma)
\leq
h_{\mathrm{top}}(X)
\leq
\frac{\log M_k}{N_k}.
\]
By \ref{item:fusion-cardinality},
$\frac{\log M_k}{N_k}
=
t_k\log p+o(1),$
and letting $k\to\infty$ gives
$h_\nu(\sigma)\leq s\log p.$
Combining the two inequalities proves \eqref{eq:nu-entropy}.
\end{proof}

\subsection{Conclusion of the proof}
Let
$\mu:=\Pi_*\nu.$
Since
\[
\Pi\circ\sigma=T_p\circ\Pi
 \text{ modulo }\mathbb Z^2,
\]
the measure $\mu$ is $T_p$-invariant. Since $\nu$ is ergodic, so is
$\mu$.

We first pass \eqref{eq:truncated-occupancy} from the truncated coding
map to the full projections. Recall that
$|\Pi(\omega)-\Pi_m(\omega)|
\leq
\sqrt2\,p^{-m}.$
Hence, uniformly in $e\in S^1$,
$|P_e\Pi(\omega)-P_e\Pi_m(\omega)|
\leq
\sqrt2\,p^{-m}.$
Let $J\in\mathcal D_m$, and let $J^+$ be the
$\sqrt2\,p^{-m}$-neighbourhood of $J$. Then
\[
|J^+|
\leq
(1+2\sqrt2)p^{-m}
<
C_0p^{-m},
\text{ and }
P_e\Pi(\omega)\in J
\Rightarrow
P_e\Pi_m(\omega)\in J^+.
\]
Therefore \eqref{eq:truncated-occupancy} gives
$(P_e\mu)(J)
\leq
C_{\mathrm{occ}}p^{-sm}.$
Thus
\begin{equation}\label{eq:full-projection-occupancy}
\sup_{e\in S^1}
\max_{J\in\mathcal D_m}
(P_e\mu)(J)
\leq
C_{\mathrm{occ}}p^{-sm},\,
\,
m\geq1.
\end{equation}
Set
$C_{\mathrm{tube}}:=2p^sC_{\mathrm{occ}}.$
By Lemma~\ref{lem:padic-to-tubes},
\begin{equation}\label{eq:final-tube-bound}
\mu(T)\leq C_{\mathrm{tube}}w(T)^s
\end{equation}
for every Euclidean tube $T\subset\mathbb R^2$.

It remains to identify the dimension. By the
Shannon--McMillan--Breiman theorem, for $\nu$-almost every $\omega$,
$-\frac1m\log\nu([\omega_1\cdots\omega_m])
\longrightarrow
h_\nu(\sigma)
=
s\log p.$
On the other hand,
$\Pi([\omega_1\cdots\omega_m])
\subset
B\left(\Pi(\omega),\sqrt2\,p^{-m}\right),$
and therefore
\[
\mu\left(
B\left(\Pi(\omega),\sqrt2\,p^{-m}\right)
\right)
\geq
\nu([\omega_1\cdots\omega_m]).
\]
It follows that
$\underline{\dim}_{\mathrm{loc}}
\mu(\Pi(\omega))
\leq s$
for $\nu$-almost every $\omega$, and hence
$\dim\mu\leq s.$

Conversely, every Euclidean ball of radius $r$ is contained in a tube
of width $2r$. Hence \eqref{eq:final-tube-bound} gives
$\mu(B(x,r))
\leq
2^sC_{\mathrm{tube}}r^s$
for every $x$ and $r>0$. Therefore
$\dim\mu\geq s,$
and so
$\dim\mu=s.$

For every $e\in S^1$ and every interval $J\subset\mathbb R$,
\eqref{eq:final-tube-bound} gives
$(P_e\mu)(J)
=
\mu(P_e^{-1}J)
\leq
C_{\mathrm{tube}}|J|^s.$
Hence
$\dim P_e\mu\geq s.$
Since $P_e$ is Lipschitz and $\dim\mu=s$,
$\dim P_e\mu\leq s.$
Thus
\[
\dim P_e\mu=s
\,
\text{for every }e\in S^1.
\]
The same conclusion holds for every non-zero linear map
$L:\mathbb R^2\to\mathbb R$, since such an $L$ is a non-zero scalar
multiple of some $P_e$.

Finally, Theorem~\ref{thm:exact-dimensionality} implies that $\mu$ and
every $P_e\mu$ are exact dimensional. Multiplication by a non-zero
scalar preserves exact dimensionality, so the same is true for every
non-zero linear image $L\mu$.

This completes the proof of Theorem~\ref{thm:main}.

\section{Construction of the initial library and pruning}
\label{sec:library-pruning-proofs}

\subsection{Constructing the initial library: proof of Lemma~\ref{lem:initial-library}}

Recall Definition \ref{Def windown}, and  that
$0<\alpha<\rho<1.$
We seek a constant $A_0\geq1$, depending only on
$p,\alpha,\rho$ and $C_0$, such that, for every sufficiently large
$N$, there exists a library
$\mathcal W\subset\mathcal A_p^N,
\,
|\mathcal W|=\lfloor p^{\rho N}\rfloor,$
for which, simultaneously for every internal window $(a,m)$, every
$e\in S^1$, and every interval $I\subset\mathbb R$ with
$|I|\leq C_0p^{-m}$,
\[
\frac{1}{|\mathcal W|}
\#\left\{
w\in\mathcal W:
e\cdot X_{a,m}(w)\in I
\right\}
\leq
A_0p^{-\alpha m}.
\]
We construct such a library by sampling uniformly without replacement
from $\mathcal A_p^N$.
\medskip

\begin{proof}[Proof of Lemma~\ref{lem:initial-library}]
Put
$M:=\lfloor p^{\rho N}\rfloor.$
We choose $\mathcal W$ uniformly at random among the $M$-element subsets
of $\mathcal A_p^N$ and show that, with positive probability, it has the
required property.

We first consider one fixed constraint. Fix an internal window $(a,m)$,
a direction $e\in S^1$, and an interval $I$ of length at most
$(C_0+1)p^{-m}.$
If $w$ is chosen uniformly from $\mathcal A_p^N$, then for every
$u\in\mathcal A_p^m$,
\[
\mathbb P\bigl(
(w_{a+1},\ldots,w_{a+m})=u
\bigr)
=
p^{-2m},
\text{ where } \mathbb P \text{ is the uniform measure}.
\]
Thus the $m$-block $(w_{a+1},\ldots,w_{a+m})$ is also uniformly distributed
on $\mathcal A_p^m$. Hence $X_{a,m}(w)$ is uniform on the
$p^{-m}$-grid
\[
\left\{
\sum_{r=1}^m p^{-r}u_r:
u_1,\ldots,u_m\in\mathcal A_p
\right\}
\subset[0,1]^2,
\]
which consists of $p^{2m}$ points.

The condition
$e\cdot X_{a,m}(w)\in I$
restricts the $p^{-m}$-grid to a strip. Since one of the two
coordinates of $e$ has absolute value at least $1/\sqrt2$, fixing the
other coordinate leaves at most
$K_0
:=
\left\lceil \sqrt2(C_0+1)\right\rceil+1$
possible values of that coordinate. Hence the strip contains at most
$K_0p^m$ grid points, uniformly in $e$, $m$, and the position of $I$.
Consequently,
\begin{equation}\label{eq:initial-single-probability}
\mathbb P\left(
e\cdot X_{a,m}(w)\in I
\right)
\leq
K_0p^{-m}.
\end{equation}
Set
\[
S
:=
\left\{
w\in\mathcal A_p^N:
e\cdot X_{a,m}(w)\in I
\right\},\, \text{ and }
Z
:=
|\mathcal W\cap S|
=
\#\left\{
w\in\mathcal W:
e\cdot X_{a,m}(w)\in I
\right\}.
\]
Since $\mathcal W$ is chosen uniformly among the $M$-element subsets
of $\mathcal A_p^N$, every $w\in\mathcal A_p^N$ satisfies
$\mathbb P(w\in\mathcal W)
=
\frac{M}{|\mathcal A_p^N|}.$
Therefore, by \eqref{eq:initial-single-probability},
$\mathbb EZ
=
M\frac{|S|}{|\mathcal A_p^N|}
\leq
K_0Mp^{-m}.$
Choose
\[
A_0\geq4K_0,\, \text{ and set }
T_m:=A_0Mp^{-\alpha m}.
\]
Since $\alpha<1$,
$T_m\geq4\mathbb EZ.$
Also $0\leq Z\leq M$.

Now, if $T_m\geq M$, then
$Z\leq T_m$ automatically, and hence
$\frac{Z}{M}\leq A_0p^{-\alpha m}.$
If $T_m<M$, then, since $Z$ is hypergeometric and
$T_m\geq4\mathbb EZ$, the standard Chernoff bound gives
\[
\mathbb P(Z>T_m)
\leq
e^{-T_m/4}.
\]
For all sufficiently large $N$,
$M=\lfloor p^{\rho N}\rfloor
\geq
\frac12p^{\rho N}.$
Since $m\leq N$,
\[
T_m
=
A_0Mp^{-\alpha m}
\geq
\frac{A_0}{2}p^{\rho N-\alpha m}
\geq
\frac{A_0}{2}p^{(\rho-\alpha)N}.
\]
Therefore
\begin{equation}\label{eq:initial-failure}
\mathbb P\left(
Z>A_0Mp^{-\alpha m}
\right)
\leq
\exp\left(
-\frac{A_0}{8}p^{(\rho-\alpha)N}
\right).
\end{equation}

It remains to make the estimate simultaneous over all constraints. Set
$\eta:=\frac{1}{4(1+\sqrt2)},$
so that
$(2\sqrt2+2)\eta<1.$
For each $m$, choose an $\eta p^{-m}$-net
\[
\mathcal E_m\subset S^1,\, \text{ with }
|\mathcal E_m|
\leq
K_Ep^m,
\qquad
K_E:=1+\frac{2\pi}{\eta}.
\]
We also discretize the position of the interval. If
$I=[t,t+L]$, with $L\leq C_0p^{-m}$, and
$I\cap e\cdot[0,1]^2=\varnothing$, then the corresponding constraint is
empty. Otherwise,
$t\in[-C_0-2,2].$
Thus we may choose an $\eta p^{-m}$-net
\[
\mathcal T_m\subset[-C_0-2,2], \text{ with }
|\mathcal T_m|
\leq
K_Tp^m,
\qquad
K_T:=1+\frac{C_0+4}{\eta}.
\]

We claim that it is enough to impose the concentration estimate for
directions $e'\in\mathcal E_m$ and intervals
\[
I'
:=
\left[
t'-(1+\sqrt2)\eta p^{-m},
\,
t'+C_0p^{-m}+(1+\sqrt2)\eta p^{-m}
\right],
\,
t'\in\mathcal T_m.
\]
Indeed, let $e\in S^1$ and
$I=[t,t+L],
\,
L\leq C_0p^{-m},$
and assume that $I\cap e\cdot[0,1]^2\neq\varnothing$. Choose
$e'\in\mathcal E_m$ and $t'\in\mathcal T_m$ such that
\[
|e-e'|\leq\eta p^{-m},
\,
|t-t'|\leq\eta p^{-m}.
\]
For every $x\in[0,1]^2$,
$|(e-e')\cdot x|
\leq
\sqrt2\,\eta p^{-m}.$
Hence
$e\cdot x\in I
\Rightarrow
e'\cdot x\in I'.$
Moreover,
\[
|I'|
=
\bigl(C_0+(2\sqrt2+2)\eta\bigr)p^{-m}
<
(C_0+1)p^{-m}.
\]
Thus every constraint with arbitrary $e\in S^1$ and
$|I|\leq C_0p^{-m}$ is dominated by one of these discretized
constraints, to which \eqref{eq:initial-failure} applies.

For each $m$, a discretized constraint is specified by
\[
(a,e',t')
\in
\{0,\ldots,N-m\}\times\mathcal E_m\times\mathcal T_m.
\]
Hence the number of discretized constraints over all scales is at most
\[
K_EK_T
\sum_{m=1}^N
(N-m+1)p^{2m}.
\]
Writing $j=N-m$, we have
\[
\sum_{m=1}^N
(N-m+1)p^{2m}
=
p^{2N}
\sum_{j=0}^{N-1}(j+1)p^{-2j}
\leq
\frac{p^{2N}}{(1-p^{-2})^2}.
\]
Therefore, setting
$K_{\mathrm{net}}
:=
\frac{K_EK_T}{(1-p^{-2})^2},$
the total number of discretized constraints is at most
$K_{\mathrm{net}}p^{2N}.$

For each fixed discretized constraint,
\eqref{eq:initial-failure} gives
\[
\mathbb P(\text{that constraint fails})
\leq
\exp\left(
-\frac{A_0}{8}p^{(\rho-\alpha)N}
\right).
\]
Therefore, by the union bound,
\[
\mathbb P(\text{at least one discretized constraint fails})
\leq
K_{\mathrm{net}}p^{2N}
\exp\left(
-\frac{A_0}{8}p^{(\rho-\alpha)N}
\right).
\]
Since $\rho>\alpha$, the right-hand side tends to $0$ as
$N\to\infty$. In particular, for every sufficiently large $N$, there
is a choice of $\mathcal W$ for which none of the discretized
constraints fails.

For such a choice, the domination above gives, simultaneously for
every internal window $(a,m)$, every $e\in S^1$, and every interval
$I\subset\mathbb R$ with $|I|\leq C_0p^{-m}$,
\[
\frac1M
\#\left\{
w\in\mathcal W:
e\cdot X_{a,m}(w)\in I
\right\}
\leq
A_0p^{-\alpha m}.
\]
Thus $\mathcal W$ is $(\alpha,A_0)$-transverse, completing the proof.
\end{proof}
\subsection{Proof of the pruning theorem}
\label{subsec:pruning-proof}
We now prove Theorem~\ref{thm:pruning}. Fix a constant
$\Gamma\geq2,$
depending only on $p$ and $C_0$; 
the precise lower bound required for $\Gamma$ will be specified in \eqref{eq:Gamma-choice}.

Let
$\mathcal W\subset\mathcal A_p^N,
\,
M:=|\mathcal W|,$
be $(\alpha,A)$-transverse, and suppose
\begin{equation}\label{eq:pruning-proof-budget}
0<t<
\alpha-\frac{\log_p(\Gamma A)}{N}.
\end{equation}
For a sufficiently large multiple $q$ of $M$, we put
$N':=qN,
\,
K:=\lfloor p^{tN'}\rfloor.$

We  construct a library
$\mathcal W'\subset\mathcal A_p^{N'},
\,
|\mathcal W'|=K,$
satisfying Theorem~\ref{thm:pruning}\textup{(a)}: every
$W\in\mathcal W'$ is a concatenation of $q$ words from $\mathcal W$,
and every $w\in\mathcal W$ occurs exactly $q/M$ times in $W$.

We  then prove the refined all-window (recall Definition \ref{Def windown}) estimate from
Theorem~\ref{thm:pruning}\textup{(b)}, namely that for some
$\beta>t$,
\[
\frac{1}{K}
\#\left\{
W\in\mathcal W':
e\cdot X_{a,m}(W)\in I
\right\}
\leq
C\left(
A^2p^{-\beta m}
+
\frac{N'}{K}
\right),
\]
uniformly in the range specified in
\eqref{eq:refined-pruning-main}.

Finally, assuming the refined estimate
\eqref{eq:input-refined} for the old library, we  prove
Theorem~\ref{thm:pruning}\textup{(c)}, namely the one-boundary estimate
\[
\frac{1}{K}
\#\left\{
W\in\mathcal W':
e\cdot X_{rN-\ell,m}(W)\in I
\right\}
\leq
C\left[
\bigl(Bp^{-\gamma\ell}+\varepsilon\bigr)
\bigl(Bp^{-\gamma(m-\ell)}+\varepsilon\bigr)
+
\frac{N'}{K}
\right],
\]
uniformly in the range specified in \eqref{eq:boundary-pruning}.

\begin{proof}[Proof of Theorem~\ref{thm:pruning}]
We divide the proof into five steps.

\medskip
\noindent
\textbf{Step 1: the balanced type class.}

Write
$r:=\frac{q}{M},$
and let $\Omega_q$ be the set of all ordered $q$-tuples
$(w_1,\ldots,w_q)\in\mathcal W^q$
in which every $w\in\mathcal W$ occurs exactly $r$ times. We identify
such a tuple with the concatenated word
$w_1\cdots w_q\in\mathcal A_p^{qN}.$
This identification is injective, and hence
$|\Omega_q|=\frac{q!}{(r!)^M}.$
By the standard method-of-types bound,
\begin{equation}\label{eq:type-probability}
|\Omega_q|
\geq
\frac{M^q}{(q+1)^M}.
\end{equation}
Indeed, the $M^q$ elements of $\mathcal W^q$ split into at most
$(q+1)^M$ type classes, and, since $q$ is divisible by $M$, the
balanced class $\Omega_q$ has maximal cardinality.

We next check that $\Omega_q$ contains enough words to prune down to
cardinality
$K=\lfloor p^{tN'}\rfloor.$
Set
\[
\rho:=\frac{\log_p M}{N}.
\]
Since $\mathcal W$ is $(\alpha,A)$-transverse, for every $e\in S^1$
and every interval $I\subset\mathbb R$ with
$|I|\leq C_0p^{-N}$,
\[
\frac1M
\#\left\{
u\in\mathcal W:
e\cdot X_{0,N}(u)\in I
\right\}
\leq
Ap^{-\alpha N}.
\]
Fix $w\in\mathcal W$ and $e\in S^1$, and choose an interval $I$ of
length at most $C_0p^{-N}$ containing $e\cdot X_{0,N}(w)$. Then the
set on the left contains $w$, so
$\frac1M\leq Ap^{-\alpha N}.$
Therefore
\begin{equation}\label{eq:rho-lower}
\rho
\geq
\alpha-\frac{\log_p A}{N}.
\end{equation}
In particular, setting
\[
\alpha_*
:=
\alpha-\frac{\log_p(\Gamma A)}{N},
\]
we have
$\rho
\geq
\alpha_*+\frac{\log_p\Gamma}{N}
>
\alpha_*
>
t,$
where the last inequality is \eqref{eq:pruning-proof-budget}.

Using \eqref{eq:type-probability},
$\frac{1}{qN}\log_p|\Omega_q|
\geq
\rho-\frac{M\log_p(q+1)}{qN}.$
Since $M$ and $N$ are fixed at this stage and
\[
\frac{M\log_p(q+1)}{qN}\rightarrow0
\text{ as } q\to\infty,
\]
while $\rho>t$, it follows that
\[
|\Omega_q|
\geq
p^{tqN}
\geq
\lfloor p^{tN'}\rfloor
=
K
\]
for every sufficiently large multiple $q$ of $M$.
\medskip
\medskip

\noindent
\textbf{Step 2: a deterministic estimate for one window.}

Set
$J:=
\left\lceil
\frac{C_0+\sqrt2}{C_0}
\right\rceil.$
Thus, for every $\ell\geq1$, every interval of length at most
$(C_0+\sqrt2)p^{-\ell}$ can be covered by at most $J$ intervals of
length $C_0p^{-\ell}$.

\begin{lemma}\label{lem:admissible-tuples}
Let
$V=v_1\cdots v_q,
\,
v_j\in\mathcal W,$
and fix a window $(a,m)$ of $V$. Write
$a=(j_0-1)N+b,
\,
0\leq b<N,$
and let $R$ be the number of level-$N$ blocks of $V$ intersected by
this window (these blocks are
$v_{j_0},\ldots,v_{j_0+R-1}$). 
Then
\begin{equation}\label{eq:number-old-blocks}
R\leq\frac{m}{N}+2.
\end{equation}
Moreover, for every $e\in S^1$ and every interval
$I\subset\mathbb R$ with $|I|\leq C_0p^{-m}$,
\begin{equation}\label{eq:admissible-tuples}
\#\left\{
(w_1,\ldots,w_R)\in\mathcal W^R:
e\cdot X_{b,m}(w_1\cdots w_R)\in I
\right\}
\leq
(JAM)^R p^{-\alpha m}.
\end{equation}
\end{lemma}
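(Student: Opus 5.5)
The bound \eqref{eq:number-old-blocks} is elementary. The window occupies positions $a+1,\ldots,a+m$, that is, offsets $b+1,\ldots,b+m$ inside the concatenation starting at block $v_{j_0}$, with $0\le b<N$. It therefore meets the blocks with indices $j_0,\ldots,j_0+\lceil (b+m)/N\rceil-1$. Since $b<N$, this gives $R=\lceil (b+m)/N\rceil\le (b+m)/N+1< m/N+2$.

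For \eqref{eq:admissible-tuples} I would split the window into its pieces. The first piece has length $\ell_1=\min\{m,N-b\}$ and consists of the digits of $w_1$ at positions $b+1,\ldots,b+\ell_1$. The middle pieces are full blocks of length $N$, and the last piece has length $\ell_R$, so that $\ell_1+\cdots+\ell_R=m$. Write $L_i:=\ell_1+\cdots+\ell_{i-1}$. Then
\[
X_{b,m}(w_1\cdots w_R)=\sum_{i=1}^R p^{-L_i}X_{b_i,\ell_i}(w_i),
\]
where $b_1=b$ and $b_i=0$ for $i\ge2$. Each term $X_{b_i,\ell_i}(w_i)$ lies in $[0,1]^2$.

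I would then count the admissible tuples sequentially, choosing $w_1,w_2,\ldots$ in turn. Suppose $w_1,\ldots,w_{i-1}$ have been fixed. The condition $e\cdot X_{b,m}\in I$ forces
\[
e\cdot\Bigl(\textstyle\sum_{j\le i}p^{-L_j}X_{b_j,\ell_j}(w_j)\Bigr)
\]
to lie in the $\sqrt2\,p^{-L_{i+1}}$-enlargement of $I$, because the remaining tail has norm at most $\sqrt2\,p^{-L_{i+1}}$. The length of this enlarged interval is at most $C_0p^{-m}+\sqrt2p^{-L_{i+1}}\le (C_0+\sqrt2)p^{-L_{i+1}}$. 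Subtracting the fixed contribution of $w_1,\ldots,w_{i-1}$ and rescaling by $p^{L_i}$ puts $e\cdot X_{b_i,\ell_i}(w_i)$ in an interval of length at most $(C_0+\sqrt2)p^{-\ell_i}$. By the choice of $J$, this interval is covered by $J$ intervals of length $C_0p^{-\ell_i}$. Transversality \eqref{eq:transverse-main} applied to the window $(b_i,\ell_i)$ of $w_i$, which is a legitimate internal window since $b_i+\ell_i\le N$, then bounds the number of choices of $w_i$ by $JAMp^{-\alpha\ell_i}$.

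The last block needs no enlargement, since the total length is $m$. Multiplying the bounds over $i=1,\ldots,R$ gives at most $(JAM)^Rp^{-\alpha\sum\ell_i}=(JAM)^Rp^{-\alpha m}$. Using $J\ge1$ harmlessly for the last factor, this is \eqref{eq:admissible-tuples}.

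The only delicate point is the bookkeeping of the enlargement: one must check that at each stage the interval length is genuinely $O(p^{-\ell_i})$ after rescaling. That holds because $p^{-m}\le p^{-L_{i+1}}$ and the tail is bounded by $\sqrt2\,p^{-L_{i+1}}$. The rest is routine.
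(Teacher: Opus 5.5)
Your proof is correct and is essentially the paper's argument. You split the window via $X_{b,m}=\sum_i p^{-L_i}X_{b_i,\ell_i}(w_i)$ and count $w_1,w_2,\ldots$ in turn, each time confining $e\cdot X_{b_i,\ell_i}(w_i)$ to an interval of length $(C_0+\sqrt2)p^{-\ell_i}$, covering it by $J$ intervals, and applying transversality. Your ceiling count for $R$ is an equivalent variant of the paper's $m\geq(R-2)N$.

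One small imprecision: the one-sided enlargement of length $\sqrt2\,p^{-L_{i+1}}$ follows from the tail lying in $p^{-L_{i+1}}[0,1]^2$, so that its projection lies in an interval of that length. A bare norm bound, as you phrase it, would give a symmetric enlargement of total length $2\sqrt2\,p^{-L_{i+1}}$. This is still harmless here, since $C_0=10$ gives $J=2$ and $2C_0\geq C_0+2\sqrt2$.
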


Thus, for a fixed window $(a,m)$, $e\in S^1$, and
$|I|\leq C_0p^{-m}$, at most
$(JAM)^R p^{-\alpha m}$
ordered $R$-tuples $(w_1,\ldots,w_R)\in\mathcal W^R$ can occur in the
$R$ level-$N$ blocks intersected by the window while satisfying
$e\cdot X_{a,m}(V)\in I.$

\begin{proof}
Fix $(w_1,\ldots,w_R)\in\mathcal W^R$, and let $\ell_j$ be the number
of digits of the window $X_{b,m}(w_1\cdots w_R)$ lying in $w_j$.
Then
$\ell_1+\cdots+\ell_R=m.$
The first piece begins at offset $b$ in $w_1$, the last piece is a
prefix of $w_R$, and every intermediate piece has length $N$. Hence
$m\geq(R-2)N,$
which proves \eqref{eq:number-old-blocks}.

Set
$b_1:=b,
\,
b_j:=0, \, 2\leq j\leq R,
\,
Y_j:=X_{b_j,\ell_j}(w_j).$ 
Then, by  definition,
\begin{equation}\label{eq:window-decomposition}
X_{b,m}(w_1\cdots w_R)
=
Y_1
+p^{-\ell_1}Y_2
+\cdots+
p^{-(\ell_1+\cdots+\ell_{R-1})}Y_R.
\end{equation}
We first count the possible choices of $w_1$. The sum of the terms in
\eqref{eq:window-decomposition} following $Y_1$ has the form
$p^{-\ell_1}z,
\,
z\in[0,1]^2.$
Its projection under $x\mapsto e\cdot x$ therefore has diameter at most
$\sqrt2\,p^{-\ell_1}.$
So, if
$e\cdot X_{b,m}(w_1\cdots w_R)\in I,$
then
$e\cdot Y_1
=
e\cdot X_{b,\ell_1}(w_1)$
belongs to an interval of length at most
\[
|I|+\sqrt2\,p^{-\ell_1}
\leq
C_0p^{-m}+\sqrt2\,p^{-\ell_1}
\leq
(C_0+\sqrt2)p^{-\ell_1},
\]
where we used $\ell_1\leq m$. By the definition of $J$, this interval
can be covered by at most $J$ intervals of length
$C_0p^{-\ell_1}$. Since $\mathcal W$ is $(\alpha,A)$-transverse,
\eqref{eq:transverse-main} applied to the window $(b,\ell_1)$ shows
that there are at most
\[
JAMp^{-\alpha\ell_1}
\]
possible choices for $w_1$.

Fix one such choice of $w_1$. Subtracting $Y_1$ from
\eqref{eq:window-decomposition} and multiplying by $p^{\ell_1}$, the
remaining condition takes the same form:
\[
e\cdot
\left(
Y_2
+p^{-\ell_2}Y_3
+\cdots+
p^{-(\ell_2+\cdots+\ell_{R-1})}Y_R
\right)
\in I_1,
\]
for some interval $I_1$ with
$|I_1|
\leq
C_0p^{-(m-\ell_1)}.$
Applying the preceding argument to the first term
$Y_2=X_{0,\ell_2}(w_2)$
gives at most
$JAMp^{-\alpha\ell_2}$
possible choices for $w_2$.
Continuing successively through the $R$ pieces, we obtain at most
\[
\prod_{j=1}^R
\left(JAMp^{-\alpha\ell_j}\right)
=
(JAM)^R
p^{-\alpha(\ell_1+\cdots+\ell_R)}
=
(JAM)^R p^{-\alpha m}
\]
possible ordered $R$-tuples $(w_1,\ldots,w_R)$. This proves
\eqref{eq:admissible-tuples}.
\end{proof}

We now specify the constant $\Gamma$ announced at the beginning of the
proof. Recall that $J$ depends only on $C_0$. Fix
\begin{equation}\label{eq:Gamma-choice}
\Gamma\geq 2J.
\end{equation}
Then $\Gamma$ depends only on $p$ and $C_0$, and, since $A\geq1$,
also $\Gamma A\geq1$. The choice $\Gamma\geq2J$ will absorb the
additional factor $2^R$ arising in the next step when
\eqref{eq:admissible-tuples} is transferred to the uniform measure on
balanced concatenations.

\medskip

\noindent
\textbf{Step 3: non-concentration in the balanced population.}
Let $V$ be uniformly distributed on $\Omega_q$, viewed as a
concatenated word in $\mathcal A_p^{qN}$, and denote its law by
$\mathbb P_{\Omega_q}$. We show that, for every fixed window $(a,m)$,
every $e\in S^1$, and every interval $I\subset\mathbb R$ with
$|I|\leq C_0p^{-m}$,
\begin{equation}\label{eq:balanced-population}
\mathbb P_{\Omega_q}
\left(
e\cdot X_{a,m}(V)\in I
\right)
\leq
\Gamma^2 A^2p^{-\beta m}
\end{equation}
for some $\beta>t$ independent of $q$, once $q$ is a sufficiently
large multiple of $M$.

Set
$\beta
:=
\frac12(t+\alpha_*),$ where we recall that
$\alpha_*
:=
\alpha-\frac{\log_p(\Gamma A)}{N}.$
By \eqref{eq:pruning-proof-budget},
$t<\beta<\alpha_*.$

Let $R$ be the number of level-$N$ blocks intersected by $(a,m)$, as
in Lemma~\ref{lem:admissible-tuples}. Recall that
\begin{equation}\label{eq:R-recall}
R\leq\frac{m}{N}+2.
\end{equation}
We distinguish two cases.

\medskip

\emph{Case 1: $R\leq q/2$.}

Fix an ordered $R$-tuple
$(u_1,\ldots,u_R)\in\mathcal W^R.$
We estimate the probability that the $R$ level-$N$ blocks intersected
by $(a,m)$ are, in order, $u_1,\ldots,u_R$.

Recall that every element of $\Omega_q$ contains each
$u\in\mathcal W$ exactly
$r:=\frac qM$
times. Suppose that the first $j-1$ intersected blocks have been
prescribed as $u_1,\ldots,u_{j-1}$. If this conditioning event has
positive probability, then among the remaining $q-j+1$ coordinates
at most $r$ can be equal to $u_j$. Hence
\[
\mathbb P_{\Omega_q}
\left(
\begin{array}{c}
\text{the $j$th intersected block is }u_j
\end{array}
\;\middle|\;
\begin{array}{c}
\text{the preceding ones are}\\
u_1,\ldots,u_{j-1}
\end{array}
\right)
\leq
\frac{r}{q-j+1}.
\]
Since $j\leq R\leq q/2$,
$\frac{r}{q-j+1}
\leq
\frac{q/M}{q/2}
=
\frac2M.$
Multiplying the successive conditional probabilities gives
\begin{equation}\label{eq:balanced-tuple-probability}
\mathbb P_{\Omega_q}
\left(
\text{the $R$ intersected blocks are }
u_1,\ldots,u_R
\right)
\leq
\left(\frac2M\right)^R.
\end{equation}

By Lemma~\ref{lem:admissible-tuples}, for the fixed $(a,m)$, $e$, and
$I$, at most
$(JAM)^R p^{-\alpha m}$
ordered $R$-tuples can give rise to the event
$e\cdot X_{a,m}(V)\in I$. Summing
\eqref{eq:balanced-tuple-probability} over these tuples gives
\[
\mathbb P_{\Omega_q}
\left(
e\cdot X_{a,m}(V)\in I
\right)
\leq
(JAM)^Rp^{-\alpha m}
\left(\frac2M\right)^R
=
(2JA)^Rp^{-\alpha m}.
\]
By \eqref{eq:Gamma-choice},
$2JA\leq\Gamma A,$
and, since $A\geq1$, we have $\Gamma A\geq1$. Hence
\eqref{eq:R-recall} gives
$(2JA)^R
\leq
(\Gamma A)^R
\leq
(\Gamma A)^{m/N+2}.$
Therefore
\[
\begin{split}
\mathbb P_{\Omega_q}
\left(
e\cdot X_{a,m}(V)\in I
\right)
&\leq
(\Gamma A)^2
(\Gamma A)^{m/N}p^{-\alpha m}
\\
&=
(\Gamma A)^2p^{-\alpha_*m}
\\
&\leq
\Gamma^2A^2p^{-\beta m},
\end{split}
\]
where the last inequality uses $\beta<\alpha_*$.

\medskip

\emph{Case 2: $R>q/2$.}

Here the preceding conditional-probability estimate is no longer
useful, because the number of unprescribed coordinates may become
small. Instead we use the lower bound for $|\Omega_q|$ from Step~1.

Fix again an ordered $R$-tuple
$(u_1,\ldots,u_R)\in\mathcal W^R.$
Prescribing the $R$ blocks intersected by $(a,m)$ fixes $R$
coordinates of an element of $\Omega_q$. The remaining $q-R$
coordinates each have at most $M$ possible values, so at most
$M^{q-R}$ elements of $\Omega_q$ extend the prescribed tuple.
Consequently,
\[
\mathbb P_{\Omega_q}
\left(
\text{the $R$ intersected blocks are }
u_1,\ldots,u_R
\right)
\leq
\frac{M^{q-R}}{|\Omega_q|}.
\]
By \eqref{eq:type-probability},
$|\Omega_q|
\geq
\frac{M^q}{(q+1)^M},$
and therefore
\begin{equation}\label{eq:long-balanced-tuple-probability}
\mathbb P_{\Omega_q}
\left(
\text{the $R$ intersected blocks are }
u_1,\ldots,u_R
\right)
\leq
M^{-R}(q+1)^M.
\end{equation}
Again, Lemma~\ref{lem:admissible-tuples} shows that at most
$(JAM)^R p^{-\alpha m}$
ordered $R$-tuples can give rise to
$e\cdot X_{a,m}(V)\in I$. Summing
\eqref{eq:long-balanced-tuple-probability} over them gives
\[
\mathbb P_{\Omega_q}
\left(
e\cdot X_{a,m}(V)\in I
\right)
\leq
(JA)^Rp^{-\alpha m}(q+1)^M.
\]
Since $J\leq\Gamma$ and $\Gamma A\geq1$,
$(JA)^R
\leq
(\Gamma A)^R
\leq
(\Gamma A)^{m/N+2},$
and hence
\begin{equation}\label{eq:long-window-preliminary}
\mathbb P_{\Omega_q}
\left(
e\cdot X_{a,m}(V)\in I
\right)
\leq
(\Gamma A)^2p^{-\alpha_*m}(q+1)^M.
\end{equation}

It remains to absorb the factor $(q+1)^M$. Since $R>q/2$ and
$m\geq(R-2)N$, for $q\geq8$,
\[
m
>
\left(\frac q2-2\right)N
\geq
\frac{qN}{4}.
\]
Thus
$(q+1)^M
=
p^{M\log_p(q+1)}
\leq
p^{\eta_qm}$. Define
$\eta_q
:=
\frac{4M\log_p(q+1)}{qN}.$
Here $M$ and $N$ are fixed while $q\to\infty$, so
$\eta_q\rightarrow0.$
Choose $q$, still a multiple of $M$, sufficiently large that
$q\geq8$ and 
$\eta_q\leq\alpha_*-\beta.$ 
Then \eqref{eq:long-window-preliminary} gives
\[
\begin{split}
\mathbb P_{\Omega_q}
\left(
e\cdot X_{a,m}(V)\in I
\right)
&\leq
(\Gamma A)^2
p^{-(\alpha_*-\eta_q)m}
\\
&\leq
\Gamma^2A^2p^{-\beta m}.
\end{split}
\]

Combining the two cases proves \eqref{eq:balanced-population},
uniformly over all windows $(a,m)$, directions $e\in S^1$, and
intervals $I$ with $|I|\leq C_0p^{-m}$.

\medskip

\noindent
\textbf{Step 4: the one-boundary estimate in the balanced population.}
Let $\gamma>0$ and $\varepsilon\geq0$, and suppose that, for some
$B\geq1$, the library $\mathcal W$ satisfies
\eqref{eq:input-refined}. Thus, for every internal window $(a,n)$,
every $e\in S^1$, and every interval $H\subset\mathbb R$ with
$|H|\leq C_0p^{-n}$,
\begin{equation}\label{eq:input-refined-recalled}
\frac1M
\#\left\{
w\in\mathcal W:
e\cdot X_{a,n}(w)\in H
\right\}
\leq
Bp^{-\gamma n}+\varepsilon.
\end{equation}

We claim that, for every
$1\leq j<q,
\,
1\leq\ell<m\leq N,$
every $e\in S^1$, and every interval $I\subset\mathbb R$ with
$|I|\leq C_0p^{-m}$,
\begin{equation}\label{eq:boundary-population}
\mathbb P_{\Omega_q}
\left(
e\cdot X_{jN-\ell,m}(V)\in I
\right)
\leq
2J
\bigl(Bp^{-\gamma\ell}+\varepsilon\bigr)
\bigl(Bp^{-\gamma(m-\ell)}+\varepsilon\bigr).
\end{equation}

Fix $j,\ell,m,e$, and $I$ as above. The window
$(jN-\ell,m)$ consists of the last $\ell$ digits of $v_j$ followed by
the first $m-\ell$ digits of $v_{j+1}$. Writing
$u:=v_j,
\,
v:=v_{j+1},$
we have
\begin{equation}\label{eq:boundary-decomposition}
X_{jN-\ell,m}(V)
=
X_{N-\ell,\ell}(u)
+
p^{-\ell}X_{0,m-\ell}(v).
\end{equation}

Set
$b_1:=Bp^{-\gamma\ell}+\varepsilon,
\,
b_2:=Bp^{-\gamma(m-\ell)}+\varepsilon.$
We first count the possible choices of $u$. Since
$X_{0,m-\ell}(v)\in[0,1]^2,$
the projection of the second term in
\eqref{eq:boundary-decomposition} has diameter at most
$\sqrt2\,p^{-\ell}.$
Hence the condition
$e\cdot X_{jN-\ell,m}(V)\in I$
forces
$e\cdot X_{N-\ell,\ell}(u)$
to lie in an interval of length at most
\[
|I|+\sqrt2\,p^{-\ell}
\leq
C_0p^{-m}+\sqrt2\,p^{-\ell}
\leq
(C_0+\sqrt2)p^{-\ell}.
\]
By the definition of $J$ in Step~2, this interval can be covered by at
most $J$ intervals of length $C_0p^{-\ell}$. Applying
\eqref{eq:input-refined-recalled} to the internal window
$(N-\ell,\ell)$ shows that there are at most
$JMb_1$
possible choices for $u$.

Fix one such $u$. By \eqref{eq:boundary-decomposition}, the condition
on $v$ is
$e\cdot X_{0,m-\ell}(v)
\in
p^\ell
\left(
I-e\cdot X_{N-\ell,\ell}(u)
\right).$
This interval  has length at most
$p^\ell C_0p^{-m}
=
C_0p^{-(m-\ell)}.$
Therefore \eqref{eq:input-refined-recalled}, applied to the internal
window $(0,m-\ell)$, gives at most
$Mb_2$
possible choices for $v$. Consequently,
\begin{equation}\label{eq:boundary-pair-count}
\#\left\{
(u,v)\in\mathcal W^2:
e\cdot
\left(
X_{N-\ell,\ell}(u)
+
p^{-\ell}X_{0,m-\ell}(v)
\right)
\in I
\right\}
\leq
JM^2b_1b_2.
\end{equation}

It remains to estimate the probability of a prescribed adjacent pair
under $\mathbb P_{\Omega_q}$. Fix $(u,v)\in\mathcal W^2$. By symmetry,
$\mathbb P_{\Omega_q}(v_j=u)=\frac1M.$
Conditional on $v_j=u$, among the remaining $q-1$ coordinates at most
$q/M$ can be equal to $v$. Hence, for $q\geq2$,
\[
\mathbb P_{\Omega_q}
\left(
v_{j+1}=v
\,\middle|\,
v_j=u
\right)
\leq
\frac{q/M}{q-1}
\leq
\frac2M.
\]
Thus
\begin{equation}\label{eq:boundary-pair-probability}
\mathbb P_{\Omega_q}
\left(
v_j=u,\ v_{j+1}=v
\right)
\leq
\frac{2}{M^2}.
\end{equation}

Summing \eqref{eq:boundary-pair-probability} over the pairs counted in
\eqref{eq:boundary-pair-count}, we obtain
\[
\mathbb P_{\Omega_q}
\left(
e\cdot X_{jN-\ell,m}(V)\in I
\right)
\leq
2Jb_1b_2.
\]
Substituting the definitions of $b_1$ and $b_2$ proves
\eqref{eq:boundary-population}.

\medskip

\noindent
\textbf{Step 5: random pruning.}
Choose $\mathcal W'$ uniformly at random among the $K$-element subsets
of $\Omega_q$. By the definition of $\Omega_q$, every
$W\in\mathcal W'$ is a concatenation of $q$ words from $\mathcal W$,
and every $w\in\mathcal W$ occurs exactly $q/M$ times. Thus
Theorem~\ref{thm:pruning}\textup{(a)} holds automatically.

We show that, with positive probability, the population estimates from
Steps~3 and~4 hold simultaneously on $\mathcal W'$, up to the sampling
error introduced by the pruning.

As in the proof of Lemma~\ref{lem:initial-library}, fix
$\eta:=\frac{1}{4(1+\sqrt2)},$
and, for every $1\leq m\leq N'$, choose $\eta p^{-m}$-nets
\[
\mathcal E_m\subset S^1,
\,
\mathcal T_m\subset[-C_0-2,2], \text{ with }
|\mathcal E_m|\leq K_Ep^m,
\,
|\mathcal T_m|\leq K_Tp^m,
\]
where
$K_E:=1+\frac{2\pi}{\eta},
\,
K_T:=1+\frac{C_0+4}{\eta}.$
Using the same enlarged intervals as there, every constraint with
arbitrary $e\in S^1$ and $|I|\leq C_0p^{-m}$ is contained in one of
the corresponding discretized constraints, whose interval has length
less than
$(C_0+1)p^{-m}.$
Since $C_0=10$, every such interval is covered by at most two intervals
of length $C_0p^{-m}$. Thus, the population estimates from
Steps~3 and~4 give, for every discretized constraint $\mathcal C$,
\begin{align}
\mathbb P_{\Omega_q}(\mathcal C)
&\leq
2\Gamma^2A^2p^{-\beta m}
\label{eq:discrete-population-internal}
\end{align}
in the all-window case, and
\begin{align}
\mathbb P_{\Omega_q}(\mathcal C)
&\leq
4J
\bigl(Bp^{-\gamma\ell}+\varepsilon\bigr)
\bigl(Bp^{-\gamma(m-\ell)}+\varepsilon\bigr)
\label{eq:discrete-population-boundary}
\end{align}
in the one-boundary case.

We impose these discretized constraints for the following two
families:
\begin{enumerate}[label=\textup{(\roman*)}]
\item every window $(a,m)$ with
\[
1\leq m\leq N',
\qquad
0\leq a\leq N'-m;
\]

\item every one-boundary window
\[
(jN-\ell,m),
\qquad
1\leq j<q,
\qquad
1\leq\ell<m\leq N.
\]
\end{enumerate}

We first bound the number of constraints. Set
\[
K_{\mathrm{net}}
:=
\frac{K_EK_T}{(1-p^{-2})^2}.
\]
The number of constraints of type~\textup{(i)} is at most
\[
K_EK_T
\sum_{m=1}^{N'}
(N'-m+1)p^{2m}
\leq
K_{\mathrm{net}}p^{2N'}.
\]
Every one-boundary window in \textup{(ii)} is, in particular, a window
of a level-$N'$ word, so the number of constraints of type~\textup{(ii)}
is bounded by the same quantity. Hence the total number of
discretized constraints is at most
\begin{equation}\label{eq:number-pruning-constraints}
2K_{\mathrm{net}}p^{2N'}.
\end{equation}

Fix one discretized constraint $\mathcal C$, and write
\[
\theta_{\mathcal C}
:=
\mathbb P_{\Omega_q}(\mathcal C),\, \text{ and }
Z_{\mathcal C}
:=
\#\left\{
W\in\mathcal W':
W\text{ satisfies }\mathcal C
\right\}.
\]
Since $\mathcal W'$ is chosen uniformly among the $K$-element subsets
of $\Omega_q$, the random variable $Z_{\mathcal C}$ is hypergeometric
with
$\mathbb EZ_{\mathcal C}
=
K\theta_{\mathcal C}.$

The standard Bernstein inequality for hypergeometric random variables
gives, for every $D>0$,
\[
\begin{split}
\mathbb P\left(
Z_{\mathcal C}
>
2K\theta_{\mathcal C}+DN'
\right)
&\leq
\exp\left(
-\frac{(K\theta_{\mathcal C}+DN')^2}
{2\bigl(K\theta_{\mathcal C}
+(K\theta_{\mathcal C}+DN')/3\bigr)}
\right)
\\
&\leq
\exp\left(-\frac{3DN'}8\right).
\end{split}
\]
Choose
$D
:=
1+\frac83
\left(
2\log p+\log(2K_{\mathrm{net}})
\right).$
Then, by \eqref{eq:number-pruning-constraints} and the union bound,
\[
\begin{split}
&\mathbb P\left(
\text{at least one discretized constraint fails}
\right)
\\
&\qquad\leq
2K_{\mathrm{net}}p^{2N'}
\exp\left(-\frac{3DN'}8\right)
<1.
\end{split}
\]
Hence there exists a choice of $\mathcal W'$ for which, simultaneously
for every discretized constraint $\mathcal C$,
\begin{equation}\label{eq:all-pruning-constraints}
Z_{\mathcal C}
\leq
2K\theta_{\mathcal C}+DN'.
\end{equation}
Fix such a choice of $\mathcal W'$.

Now let $(a,m)$ be any window of a level-$N'$ word, let $e\in S^1$,
and let $I\subset\mathbb R$ satisfy $|I|\leq C_0p^{-m}$. The
discretization above places this event inside a constraint
$\mathcal C$ satisfying \eqref{eq:discrete-population-internal}.
Therefore \eqref{eq:all-pruning-constraints} gives
\[
\begin{split}
\frac1K
\#\left\{
W\in\mathcal W':
e\cdot X_{a,m}(W)\in I
\right\}
&\leq
\frac{Z_{\mathcal C}}K
\\
&\leq
4\Gamma^2A^2p^{-\beta m}
+
D\frac{N'}K.
\end{split}
\]

Similarly, assuming \eqref{eq:input-refined}, let
$1\leq j<q,
\,
1\leq\ell<m\leq N,$
and let $e\in S^1$ and $I\subset\mathbb R$ satisfy
$|I|\leq C_0p^{-m}$. The corresponding event is contained in a
discretized one-boundary constraint satisfying
\eqref{eq:discrete-population-boundary}. Hence
\[
\begin{split}
&\frac1K
\#\left\{
W\in\mathcal W':
e\cdot X_{jN-\ell,m}(W)\in I
\right\}
\\
&\qquad\leq
8J
\bigl(Bp^{-\gamma\ell}+\varepsilon\bigr)
\bigl(Bp^{-\gamma(m-\ell)}+\varepsilon\bigr)
+
D\frac{N'}K.
\end{split}
\]

Finally, set
$C
:=
\max\{4\Gamma^2,\,8J,\,D\}.$
Then
\[
\frac1K
\#\left\{
W\in\mathcal W':
e\cdot X_{a,m}(W)\in I
\right\}
\leq
C
\left(
A^2p^{-\beta m}
+
\frac{N'}K
\right),
\]
which is \eqref{eq:refined-pruning-main}, while
\[
\begin{split}
&\frac1K
\#\left\{
W\in\mathcal W':
e\cdot X_{jN-\ell,m}(W)\in I
\right\}
\\
&\qquad\leq
C
\left[
\bigl(Bp^{-\gamma\ell}+\varepsilon\bigr)
\bigl(Bp^{-\gamma(m-\ell)}+\varepsilon\bigr)
+
\frac{N'}K
\right],
\end{split}
\]
which is \eqref{eq:boundary-pruning}. Thus
Theorem~\ref{thm:pruning}\textup{(b)} and~\textup{(c)} hold with the
same constant $C$, while part~\textup{(a)} was built
into the choice $\mathcal W'\subset\Omega_q$. This completes the proof
of Theorem~\ref{thm:pruning}.
\end{proof}

\section{Bilateral determinism and rational dimension drop}\label{sec:positive}
In this section we prove Theorem~\ref{cor:quasi-bernoulli-drop}: A quasi-Bernoulli stationary measure of positive dimension always has a dimension dropping projection. The proof follows by developing a criterion using the notion of bilateral determinism, due to Ornstein and Weiss \cite{OrnsteinWeiss1975}.  We
first recall the precise definitions.

Let $\mu \in \mathcal{P}(\mathbb{T}^2)$ be an ergodic $T_p$-invariant probability measure with
$0<\dim_H\mu=s\leq1.$ Let $\nu$ be the $\sigma$-invariant measure on
$\mathcal A_p^{\mathbb N}$ such that
$\mu=\Pi_*\nu,$
and let $\widetilde\nu$ denote its natural extension to
$\mathcal A_p^{\mathbb Z}$. Then, writing $\sigma=\sigma_p$ for the left shift map on $\mathcal A_p^{\mathbb Z}$, 
\[
h:=h_{\widetilde\nu}(\sigma)
=h_\nu(\sigma)
=h_\mu(T_p)
=s\log p.
\] 
For $m\geq1$, set
\begin{equation}\label{eq:Dmdef2}
D_m
:=
\mathrm H_{\widetilde\nu}
\bigl(
\omega_1^m
\mid
\omega_{\leq0},\omega_{>m}
\bigr),
\qquad
D_0:=0.
\end{equation}
Thus $D_m$ is the uncertainty remaining in a deleted block of length
$m$ after its entire bilateral exterior has been revealed.

Following Ornstein--Weiss, we say that the process is
\emph{bilaterally deterministic} if every finite block is determined,
almost surely, by its bilateral exterior. In the present notation this
is equivalent to
$D_m=0
\text{ for every }m\geq1.$
We emphasize that bilateral determinism here is a property of the
standard $p$-adic symbolic presentation of $\mu$, rather than of the
underlying measure-preserving system alone. Indeed, Ornstein and Weiss
showed that every ergodic transformation with a finite generator admits
a bilaterally deterministic finite generator.

Our main criterion is that failure of bilateral determinism for this
particular coding forces rational dimension drop.

\begin{thm}\label{thm:predictive-drop}
Let $\mu\in\mathcal P(\mathbb T^2)$ be ergodic and $T_p$-invariant, with
$\dim\mu=s>0$. Let $\nu \in\mathcal P(\mathcal A_p^{\mathbb N})$ be $\sigma$-invariant  such that $\mu=\Pi_*\nu$,  and let
$\widetilde\nu\in\mathcal P(\mathcal A_p^{\mathbb Z})$ be the natural
extension of $\nu$. If $\widetilde\nu$ is not bilaterally deterministic,
then there exists 
$a\in\mathbb Z^2\setminus\{0\}$ such that
$\dim (P_a \mu)<s.$
\end{thm}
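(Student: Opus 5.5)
The plan is to turn a single non-trivial bilateral ambiguity into a linear amount of fiber entropy for one well-chosen rational projection. We then read off the dimension through the entropy characterisation of exact dimensional measures recalled in Section~\ref{sec:prelim}, which applies to $P_a\mu$ by Theorem~\ref{thm:exact-dimensionality}.

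\textbf{Step 1: choosing $a$.} Since $\widetilde\nu$ is not bilaterally deterministic, $D_m>0$ for some $m\geq1$. By the definition \eqref{eq:Dmdef2} there is a set of exteriors of positive $\widetilde\nu$-measure on which the conditional law of $\omega_1^m$ is not a point mass. Since $\mathcal A_p^m$ is finite, we can fix two distinct words $u\neq v\in\mathcal A_p^m$ and $c>0$ such that
\[
E:=\bigl\{\text{the conditional probabilities of } \omega_1^m=u \text{ and of } \omega_1^m=v \text{, given } (\omega_{\leq0},\omega_{>m}),\ \text{both exceed } c\bigr\}
\]
has $\widetilde\nu(E)>0$. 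The vector $\Pi_m(u)-\Pi_m(v)$ lies in $p^{-m}\mathbb Z^2\setminus\{0\}$. We take $a\in\mathbb Z^2\setminus\{0\}$ orthogonal to it, so that $a\cdot\Pi_m(u)=a\cdot\Pi_m(v)$. The point is that $P_a\Pi$ is additive over digit positions. Hence, if two sequences agree outside a window $[j+1,j+m]$ and carry $u$, resp.\ $v$, in that window, they have the same image under $P_a\circ\Pi$, and also under the truncations $P_a\circ\Pi_n$ for $n\geq j+m$.

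\textbf{Step 2: fiber entropy.} Let $\mathcal F_n:=(P_a\circ\Pi_n)^{-1}\mathcal D_n$, which is a coarsening of the cylinder partition $\mathcal P_n$. Since $P_a\circ\Pi_n$ takes values in $p^{-n}\mathbb Z$ with $|a|$-bounded range, $\mathcal F_n$ is comparable to the partition by the exact value of $P_a\circ\Pi_n$, up to $O_a(1)$ entropy. By the chain rule,
\[
\mathrm H(\nu,\mathcal P_n)=\mathrm H(\nu,\mathcal F_n)+\mathrm H(\nu,\mathcal P_n\mid\mathcal F_n).
\]
Fix disjoint windows $[j_i+1,j_i+m]\subset[1,n]$ with spacing $2m$, so there are about $n/(2m)$ of them. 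For each window, let $\xi_i$ be the indicator that $\omega_{j_i+1}^{j_i+m}\in\{u,v\}$, refined by which of the two it is. Conditioning on everything outside window $i$ (including $\omega_{\leq0}$ and $\omega_{>n}$, which is allowed in the natural extension since it only lowers entropy), the value of $P_a\circ\Pi_n$ does not distinguish $u$ from $v$. Therefore
\[
\mathrm H\bigl(\omega_{j_i+1}^{j_i+m}\,\bigm|\,\text{exterior},\ P_a\circ\Pi_n\bigr)\;\geq\;\widetilde\nu(\sigma^{-j_i}E)\cdot h_2(c)\;=:\;c'>0,
\]
using stationarity. Here $h_2(c)$ is a lower bound on the binary entropy of the conditional law restricted to $\{u,v\}$ on $E$, which is positive because both conditional probabilities exceed $c$. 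Subadditivity in reverse, namely that $\mathrm H(X_1,\dots,X_k\mid Z)\geq\sum_i \mathrm H(X_i\mid X_{\neq i},Z)$, then gives
\[
\mathrm H(\nu,\mathcal P_n\mid\mathcal F_n)\geq \frac{c'n}{2m}-O(1).
\]

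\textbf{Step 3: conclusion.} Combining Step 2 with $\mathrm H(\nu,\mathcal P_n)=nh+o(n)$ gives
\[
\mathrm H\bigl((P_a\circ\Pi_n)_*\nu,\mathcal D_n\bigr)\leq n\bigl(h-\tfrac{c'}{2m}\bigr)+o(n).
\]
Passing from $\Pi_n$ to $\Pi$ costs $O(1)$ entropy, as in the truncation argument of Section~\ref{sec:proof-main}. Dividing by $n\log p$ and using exact dimensionality of $P_a\mu$ then yields $\dim P_a\mu\leq s-\frac{c'}{2m\log p}<s$. Note that when $s=1$ the target is $\dim P_a\mu<1=\min\{1,s\}$, and the same bound gives it.

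The main obstacle is Step 2: justifying the inequality $\mathrm H(X_1,\dots,X_k\mid Z)\geq\sum_i\mathrm H(X_i\mid X_{\neq i},Z)$ with $Z$ the projection value and the extra conditioning on the full bilateral exterior, and checking carefully that conditioning on $Z$ adds no information once the exterior is known. I would first try to verify this directly from the chain rule on the finite partition generated by all windows together with $\omega_{\leq0}$ and $\omega_{>n}$, approximated by finite cylinders and using martingale convergence for the conditional probabilities defining $E$.
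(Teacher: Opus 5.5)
Your proposal is correct and is essentially the paper's proof of Proposition~\ref{prop:local-drop}. It makes the same choice of $a\perp\Pi_m(u)-\Pi_m(v)$, the same per-window conditional entropy bound given the exterior and the projected value, and the same chain-rule summation; the step you flag as the main obstacle is exactly the short argument in the paper's Step~3, which uses contiguous length-$m$ blocks instead of your $2m$-spaced ones. One cosmetic fix: the per-window bound must carry the fiber mass, i.e.\ it is $\int_{E}(\alpha+\beta)\,h_2\bigl(\tfrac{\alpha}{\alpha+\beta}\bigr)\,d\widetilde\nu\geq 2c\log 2\cdot\widetilde\nu(E)$ (with $\alpha,\beta$ the conditional probabilities of $u,v$ and $E$ your event), not $\widetilde\nu(E)\,h_2(c)$, which changes nothing since only positivity is used.
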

The main content of Theorem~\ref{thm:predictive-drop} is  in
the range $0<s<1$. For $s>1$ the conclusion is immediate,  while the case
$s=1$ follows from the theorem of Py\"or\"al\"a, Shmerkin, Suomala, and
Wu \cite{Aleksi2025pablo} discussed in the Introduction.

The converse to Theorem~\ref{thm:predictive-drop} is false: bilateral
determinism does not preclude dimension-dropping projections.
Indeed, let $\mu$ be the measure supplied by Theorem~\ref{thm:main}
for $p=2$, with
$\dim\mu=\frac12,$
and let $\widetilde\nu$ be its natural extension on the two-sided coding 
$\{0,1\}^2$. Since every projection of $\mu$ preserves dimension, the
contrapositive of Theorem~\ref{thm:predictive-drop} implies that
$\widetilde\nu$ is bilaterally deterministic.
Now relabel the four symbols by the injective one-block map
\[
\phi(i,j):=(2i+j,0)\in\{0,1,2,3\}^2,
\]
and let $\eta$ be the corresponding $T_4$-invariant measure on
$\mathbb T^2$. If $(X_n)_{n\in\mathbb Z}$ denotes the original
two-sided process and $Y_n:=\phi(X_n)$ the relabelled one, then
injectivity of $\phi$ implies that $X_n$ can be recovered from $Y_n$ at
each coordinate. Hence, for every $m\geq1$,
$\mathrm H(Y_1^m\mid Y_{\leq0},Y_{>m})
=
\mathrm H(X_1^m\mid X_{\leq0},X_{>m})
=
0.$
Thus the natural base-$4$ coding of $\eta$ is again bilaterally
deterministic.

On the other hand, the symbolic entropy is unchanged, and therefore
\[
\dim\eta
=
\frac{h_\nu(\sigma)}{\log4}
=
\frac{\frac12\log2}{\log4}
=
\frac14.
\]
Yet $\eta$ is supported on the horizontal axis. Hence, for
$V=\operatorname{span}\{(0,1)\},$
we have
$\pi_V\eta=\delta_0,$
so the vertical projection strictly drops dimension.

This section is organized as follows. We first show in
Section~\ref{subsec:localdrop} that if
$D_m>0$
for some $m\geq1$, then there exists a primitive
$a\in\mathbb Z^2\setminus\{0\}$ such that
$\dim(P_a\mu)<s.$
Moreover, one may choose
$|a|_\infty\leq p^m-1.$
Next, in Section~\ref{subsec:convexDm}, we show that the increments
$D_m-D_{m-1}$
are nondecreasing. It follows that either $D_m=0$ for every $m\geq1$,
or else $D_m$ grows at least linearly once it becomes positive.

Finally, in Section~\ref{subsec:finiteE}, we relate $D_m$ to the
predictive information of the process; informally, this is the mutual
information between the past and the future. In particular, finite
predictive information implies that $D_m>0$ for some $m$, and hence
forces a dimension-dropping rational projection. We then apply this
criterion to deduce Theorem~\ref{cor:quasi-bernoulli-drop}.

\subsection{Local bilateral non-determinism}\label{subsec:localdrop}

We begin with a local criterion, showing that  positive bilateral
uncertainty at a finite scale already gives a  dimension dropping projection.

\begin{proposition}\label{prop:local-drop}
Suppose that
$D_m>0$
for some $m\geq1$. Then there exists a primitive
$a\in\mathbb Z^2\setminus\{0\}$
with $|a|_\infty\leq p^m-1,$
such that
$\dim(P_a\mu)<s.$
\end{proposition}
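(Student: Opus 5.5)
The plan is to turn positive bilateral uncertainty into a strict entropy gap for one rational projection, working in base $p^m$ so that a deleted block of length $m$ becomes a single digit.

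\textbf{Choosing $a$.} Regard $\widetilde\nu$ as a $\sigma^m$-invariant process whose letters are the blocks $\xi_k:=\omega_{(k-1)m+1}^{km}\in\mathcal A_p^m$. Each block has an integer label $d(\xi):=p^m\Pi_m(\xi)\in\{0,\ldots,p^m-1\}^2$.
\begin{itemize}
\item Since $D_m>0$, there is a positive-measure set of exteriors $(\omega_{\le0},\omega_{>m})$ on which the conditional law of $\omega_1^m$ is not a point mass.
\item There are only finitely many pairs $u\neq v$ in $\mathcal A_p^m$. So I can fix one pair such that, on a set of exteriors of positive measure, both $u$ and $v$ have conditional probability bounded below.
\item Put $\delta:=d(u)-d(v)\in\mathbb Z^2\setminus\{0\}$. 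Then $|\delta|_\infty\le p^m-1$.
\item Let $a$ be the primitive vector $(\delta_2,-\delta_1)/\gcd(\delta_1,\delta_2)$. It satisfies $|a|_\infty\le p^m-1$ and $a\cdot d(u)=a\cdot d(v)$.
\end{itemize}

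\textbf{Upper bound for $\dim P_a\mu$.} Define the integer-valued factor process $Y_k:=a\cdot d(\xi_k)$, which takes boundedly many values. Then
$$P_a\mu=\Bigl(\textstyle\sum_k Y_k p^{-mk}\Bigr)_*\widetilde\nu.$$
The $p^{mn}$-adic partition entropy of $P_a\mu$ is at most $\mathrm H(Y_1^n)+O(1)$, because bounded integer digits with carries only smear mass over $O(1)$ adjacent atoms. Using exact dimensionality (Theorem~\ref{thm:exact-dimensionality}) and the entropy characterization of dimension recalled in Section~\ref{sec:prelim}, this gives
$$\dim P_a\mu\le \frac{h(Y)}{m\log p},$$
where $h(Y)$ is the entropy rate of $Y$ under $\sigma^m$. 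On the other hand,
$$s=\frac{h(\xi)}{m\log p}, \qquad h(\xi)=m\,h_\nu(\sigma).$$
So it suffices to prove $h(\xi)-h(Y)>0$.

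\textbf{The entropy gap.} Since $Y$ is a factor of $\xi$,
$$h(\xi)-h(Y)=\lim_n \frac1n \mathrm H(\xi_1^n\mid Y_1^n).$$
I will bound this from below by conditioning on more information.
\begin{enumerate}
\item Apply the chain rule over $k=1,\ldots,n$.
\item In the $k$-th term, additionally condition on all $\xi_j$ with $j\neq k$, both past and future, including $j\le0$ and $j>n$. Conditioning can only decrease entropy, and $Y_j$ for $j\neq k$ is a function of $\xi_j$.
\item This gives
$$\mathrm H(\xi_1^n\mid Y_1^n)\ge \sum_{k=1}^n \mathrm H\bigl(\xi_k\mid \xi_{j},\,j\neq k,\;Y_k\bigr).$$
\item By $\sigma^m$-stationarity, every summand equals
$$c:=\mathrm H_{\widetilde\nu}\bigl(\omega_1^m\mid\omega_{\le0},\omega_{>m},\,a\cdot d(\omega_1^m)\bigr).$$
\end{enumerate}
Hence $h(\xi)-h(Y)\ge c$, and we conclude $\dim P_a\mu\le s-c/(m\log p)<s$ once $c>0$.

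\textbf{Main obstacle: showing $c>0$.} This is the step where the choice of $a$ matters. On the positive-measure set of exteriors chosen above, both $u$ and $v$ have conditional probability bounded below, and they share the same value of $a\cdot d$. So on the event $\{\omega_1^m\in\{u,v\}\}$, revealing $a\cdot d(\omega_1^m)$ does not separate $u$ from $v$, and the conditional entropy is bounded below there. Integrating over that set gives $c>0$.

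The care needed is in selecting a single pair $(u,v)$ that works on a positive-measure set of exteriors, as done in the first step. It is also important to use $a\cdot d$ of the whole block, rather than digitwise projections $a\cdot\omega_n$, so that $u$ and $v$ genuinely collide under the factor map.
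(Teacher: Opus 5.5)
Your proposal is correct and follows essentially the same route as the paper: the same choice of the colliding pair $(u,v)$ on a positive-measure set of exteriors, the same primitive $a$ perpendicular to $d(u)-d(v)$, a per-block entropy bound $c>0$ given the bilateral exterior and the projected block digit, the chain rule with extra conditioning to get a linear entropy deficit, and exact dimensionality along scales $N\in m\mathbb N$. The only cosmetic difference is that you condition on the whole projected digit string $Y_1^n$ and compare entropy rates of the factor process, whereas the paper conditions on the single integer $Q_{a,N}$; both versions work.
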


\begin{proof}
Fix $m\geq1$ with $D_m>0$.

\medskip
\noindent
\textbf{Step 1: From $D_m>0$ to a one-window collision.}

Consider the $\sigma$-algebra 
$\mathscr G_m
:=
\sigma(\omega_n:n\leq0\text{ or }n>m)$ on $\mathcal{A}_p ^\mathbb{Z}$.
By definition,
\[
D_m
=
\mathrm H_{\widetilde\nu}
\bigl(
\omega_1^m\mid\mathscr G_m
\bigr)
>0.
\]
For each $U\in\mathcal A_p^m$, write
\[
q_U
:=
\widetilde\nu
\bigl(
\omega_1^m=U\mid\mathscr G_m
\bigr).
\]
Since $D_m>0$, the conditional distribution of $\omega_1^m$ given
$\mathscr G_m$ is not a point mass, on a set of positive
$\widetilde\nu$-measure. Hence
\[
\widetilde\nu\left(
\bigcup_{U\neq V}
\{q_U>0,\ q_V>0\}
\right)>0.
\]
There are only finitely many pairs $U\neq V$, so we may fix distinct
$U,V\in\mathcal A_p^m$ such that
\[
\widetilde\nu \left( \{q_U>0,\ q_V>0\}\right)>0.
\]
Moreover,
$\{q_U>0,\ q_V>0\}
=
\bigcup_{\ell=1}^{\infty}
\{q_U\geq\ell^{-1},\ q_V\geq\ell^{-1}\},$
so there exists $\ell_0\geq1$ such that the event
\[
G
:=
\{q_U\geq\ell_0^{-1},\ q_V\geq\ell_0^{-1}\}
\in\mathscr G_m
\]
has positive $\widetilde\nu$-measure. Set
$\delta:=\ell_0^{-1},$ and
$\kappa:=\widetilde\nu(G)>0.$
Then, on $G$,
\begin{equation}\label{eq:twoWordsConditional}
\widetilde\nu(\omega_1^m=U\mid\mathscr G_m)\geq\delta,
\qquad
\widetilde\nu(\omega_1^m=V\mid\mathscr G_m)\geq\delta.
\end{equation}

We now choose a rational direction in which these two words collide.
Set
\[
v
:=
p^m\bigl(\Pi_m(U)-\Pi_m(V)\bigr)
=
\sum_{r=1}^m p^{m-r}(U_r-V_r)
\in\mathbb Z^2.
\]
The map $W\mapsto\Pi_m(W)$ is injective on $\mathcal A_p^m$, so
$U\neq V$ implies $v\neq0$. Write
$v=(v_1,v_2),$ and $d:=\gcd(|v_1|,|v_2|),$
and define
\[
a:=\frac1d(v_2,-v_1)\in\mathbb Z^2.
\]
Then $a$ is primitive and perpendicular to $v$. Thus,
\begin{equation}\label{eq:blockcollision}
a\cdot\Pi_m(U)
=
a\cdot\Pi_m(V).
\end{equation}

Finally, for every $W\in\mathcal A_p^m$,
$p^m\Pi_m(W)
=
\sum_{r=1}^m p^{m-r}W_r
\in
\{0,\ldots,p^m-1\}^2.$
In particular
$|v|_\infty\leq p^m-1.$
Since $d\geq1$,
\begin{equation}\label{eq:aheight}
|a|_\infty
=
\frac{|v|_\infty}{d}
\leq
p^m-1.
\end{equation}

\medskip

\noindent
\textbf{Step 2: From one window to a positive density of windows.}

Let
$N=rm,
\, r\geq1,$
and partition $\{1,\ldots,N\}$ into the consecutive blocks
$$I_i:=\{(i-1)m+1,\ldots,im\},
\,
1\leq i\leq r.$$
For each $1\leq i \leq r$, define
\[
X_i(\omega)
:=
(\omega_{(i-1)m+1},\ldots,\omega_{im})
\in\mathcal A_p^m, \text{ and  }
Y_i(\omega)
:=
p^m a\cdot\Pi_m(X_i(\omega))
=
\sum_{k=1}^m p^{m-k}
\,a\cdot\omega_{(i-1)m+k}.
\]
Let
$\mathscr H_i
:=
\sigma(\omega_n:n\notin I_i)$
be the $\sigma$-algebra generated by all coordinates outside $I_i$. Recalling the event $G$ from Step 1,
set
$G_i:=\sigma^{-(i-1)m}G.$
By stationarity,
$G_i\in\mathscr H_i,$ and
$\widetilde\nu(G_i)=\widetilde\nu(G)=\kappa.$
The shifted form of \eqref{eq:twoWordsConditional} gives, on $G_i$,
\begin{equation}\label{eq:twoWordsShifted}
\widetilde\nu(X_i=U\mid\mathscr H_i)\geq\delta,
\text{ and }
\widetilde\nu(X_i=V\mid\mathscr H_i)\geq\delta.
\end{equation}
Define the $\mathscr G_i$-measurable functions
$\alpha_i
:=
\widetilde\nu(X_i=U\mid\mathscr H_i),$ and
$\beta_i
:=
\widetilde\nu(X_i=V\mid\mathscr H_i).$
Thus, for $\widetilde\nu$-almost every $\omega\in G_i$,
$\alpha_i(\omega)\geq\delta,$ and
$\beta_i(\omega)\geq\delta.$
Moreover, by \eqref{eq:blockcollision},
$p^m a\cdot\Pi_m(U)
=
p^m a\cdot\Pi_m(V).$
So the two events $X_i=U$ and $X_i=V$ give the same value of $Y_i$.

Fix $\omega\in G_i$ and condition on $\mathscr H_i$. The events
$X_i=U$ and $X_i=V$ have conditional probabilities
$\alpha_i(\omega)$ and $\beta_i(\omega)$, respectively, and are
identified by $Y_i$. Their contribution therefore gives
\[
\mathrm H_{\widetilde\nu}
(X_i\mid\mathscr H_i,Y_i)
\geq
\int_{G_i}
\left[
-\alpha_i\log\frac{\alpha_i}{\alpha_i+\beta_i}
-\beta_i\log\frac{\beta_i}{\alpha_i+\beta_i}
\right]
\,d\widetilde\nu.
\]
Since $\alpha_i,\beta_i\geq\delta$ on $G_i$, the integrand is at least
$2\delta\log2$. Hence
\[
\mathrm H_{\widetilde\nu}
(X_i\mid\mathscr H_i,Y_i)
\geq
2\delta\log2\,\widetilde\nu(G_i)
=
2\kappa\delta\log2.
\]
Setting
$c:=2\kappa\delta\log2>0,$
we obtain
\begin{equation}\label{eq:slotentropy}
\mathrm H_{\widetilde\nu}
(X_i\mid\mathscr H_i,Y_i)
\geq c,
\qquad
1\leq i\leq r.
\end{equation}
The constant $c$ depends only on the fixed scale-$m$ conditional
collision obtained in Step~1, and in particular is independent of
$i$, $r$, and $N$.

\medskip

\noindent
\textbf{Step 3: Linear entropy loss at scale $N$.}
Recall that $N=rm$. Define the random variable
\[
Q_{a,N}:\mathcal A_p^{\mathbb Z}\to\mathbb Z,
\text{ by } 
Q_{a,N}(\omega)
:=
p^N a\cdot\Pi_N(\omega)
=
\sum_{n=1}^N p^{N-n}a\cdot\omega_n.
\]
Thus $p^{-N}Q_{a,N}(\omega)$ is precisely the $N$-digit truncation of
$a\cdot\Pi(\omega)$.

Fix $1\leq i\leq r$. By the definition of $Y_i$, the contribution of
the coordinates in $I_i$ to $Q_{a,N}$ is
$p^{N-im}Y_i.$
Hence
\begin{equation}\label{eq:Qa-decomposition}
Q_{a,N}
=
C_i+p^{N-im}Y_i,
\end{equation}
where
$C_i
:=
\sum_{\substack{1\leq n\leq N\\ n\notin I_i}}
p^{N-n}a\cdot\omega_n.$
Since $C_i$ depends only on coordinates outside $I_i$, it is
$\mathscr H_i$-measurable. Therefore, conditional on
$\mathscr H_i$, the random variables $Q_{a,N}$ and $Y_i$ determine
each other. Therefore,
\begin{equation}\label{eq:QiYi}
\mathrm H_{\widetilde\nu}
\bigl(
X_i\mid\mathscr H_i,Q_{a,N}
\bigr)
=
\mathrm H_{\widetilde\nu}
\bigl(
X_i\mid\mathscr H_i,Y_i
\bigr)
\geq c,
\end{equation}
where the last inequality is \eqref{eq:slotentropy}.

Now consider
$\mathscr G_N
:=
\sigma(\omega_n:n\leq0\text{ or }n>N)$
 Conditioning on $\mathscr G_N$ and applying the
chain rule gives
\begin{align*}
\mathrm H_{\widetilde\nu}
(\omega_1^N\mid Q_{a,N})
&\geq
\mathrm H_{\widetilde\nu}
(\omega_1^N\mid Q_{a,N},\mathscr G_N)
\\
&=
\sum_{i=1}^r
\mathrm H_{\widetilde\nu}
\bigl(
X_i
\mid
X_1,\ldots,X_{i-1},Q_{a,N},\mathscr G_N
\bigr).
\end{align*}
For each $i$, the $\sigma$-algebra $\mathscr H_i$ contains
$\mathscr G_N$ and all blocks $X_j$ with $j\neq i$. Thus conditioning
further on $\mathscr H_i$ can only decrease entropy, and
\eqref{eq:QiYi} yields
\[
\begin{split}
&\mathrm H_{\widetilde\nu}
\bigl(
X_i
\mid
X_1,\ldots,X_{i-1},Q_{a,N},\mathscr G_N
\bigr)
\\
&\qquad\geq
\mathrm H_{\widetilde\nu}
\bigl(
X_i\mid\mathscr H_i,Q_{a,N}
\bigr)
\geq c.
\end{split}
\]
Summing over $i=1,\ldots,r$ and using $r=N/m$, we obtain
\begin{equation}\label{eq:linearConditionalEntropy}
\mathrm H_{\widetilde\nu}
(\omega_1^N\mid Q_{a,N})
\geq
rc
=
\frac{c}{m}N,
\qquad
N\in m\mathbb N.
\end{equation}

Finally, $Q_{a,N}$ is a function of $\omega_1^N$, so the chain rule
gives
$\mathrm H_{\widetilde\nu}(Q_{a,N})
=
\mathrm H_{\widetilde\nu}(\omega_1^N)
-
\mathrm H_{\widetilde\nu}(\omega_1^N\mid Q_{a,N}).$
Since the entropy rate of the process is $h$,
$\mathrm H_{\widetilde\nu}(\omega_1^N)
=
Nh+o(N).$ 
Combining this with \eqref{eq:linearConditionalEntropy}, we conclude
that, along $N\in m\mathbb N$,
\begin{equation}\label{eq:QaEntropyDrop}
\mathrm H_{\widetilde\nu}(Q_{a,N})
\leq
N\left(h-\frac{c}{m}\right)+o(N).
\end{equation}

\medskip
\noindent
\textbf{Step 4: Entropy and dimension of the rational projection.}
We now use \eqref{eq:QaEntropyDrop} to bound the dimension of
$P_a\mu$. For $\omega\in\mathcal A_p^{\mathbb Z}$,
\[
P_a\Pi(\omega)
=
a\cdot\Pi(\omega)
=
p^{-N}Q_{a,N}(\omega)
+
\sum_{n>N}p^{-n}a\cdot\omega_n.
\]
Since $\omega_n\in\{0,\ldots,p-1\}^2$,
$|a\cdot\omega_n|
\leq
(p-1)|a|_1,$
and hence
\begin{equation}\label{eq:projectionTail}
\left|
P_a\Pi(\omega)-p^{-N}Q_{a,N}(\omega)
\right|
\leq
|a|_1p^{-N}.
\end{equation}

So, after fixing the value of $Q_{a,N}$, the random variable
$P_a\Pi(\omega)$ is confined to an interval of length at most
$2|a|_1p^{-N}.$
Such an interval meets at most
$M_a:=2|a|_1+2$
atoms of the $p^N$-adic partition $\mathcal D_N$. It follows that
\begin{equation}\label{eq:projectionEntropyCompare}
\mathrm H(P_a\mu,\mathcal D_N)
\leq
\mathrm H_{\widetilde\nu}(Q_{a,N})
+\log M_a.
\end{equation}

Combining \eqref{eq:QaEntropyDrop} and
\eqref{eq:projectionEntropyCompare}, and taking $N=rm$, gives
\[
\limsup_{r\to\infty}
\frac{
\mathrm H(P_a\mu,\mathcal D_{rm})
}{
rm\log p
}
\leq
\frac{h-c/m}{\log p}.
\]

Finally, since $a\neq0$, the map $P_a$ is a non-zero scalar multiple
of an orthogonal projection. Hence $P_a\mu$ is exact dimensional by
Theorem~\ref{thm:exact-dimensionality}. In particular,
\[
\dim(P_a\mu)
=
\lim_{N\to\infty}
\frac{
\mathrm H(P_a\mu,\mathcal D_N)
}{
N\log p
}.
\]
Since $h=s\log p$, the preceding subsequential estimate yields
\[
\dim(P_a\mu)
\leq
\frac{h-c/m}{\log p}
=
s-\frac{c}{m\log p}
<s.
\]
Together with \eqref{eq:aheight}, this proves
Proposition~\ref{prop:local-drop}.
\end{proof}

\subsection{Elementary properties of the bilateral uncertainty}
\label{subsec:convexDm}

We next record some elementary properties of the quantities $D_m$. In
particular, we show that their successive increments are nondecreasing.

\begin{lemma}\label{lem:Dm-properties}
The sequence 
$(D_m)_{m\geq0}$ is nondecreasing and convex.
Moreover, if
$D_m>0$ for some $m\geq1$ then 
\begin{equation}\label{eq:Dmlinear}
D_m
\geq
(m-m_*+1)D_{m_*},
\qquad m\geq m_*:=\min\{m\geq1:D_m>0\}.
\end{equation}
\end{lemma}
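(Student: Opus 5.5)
The plan is to study the increments $\Delta_m:=D_m-D_{m-1}$ by expressing $D_m$ through the chain rule, and to show $\Delta_m\le\Delta_{m+1}$ using stationarity and the monotonicity of conditional entropy under conditioning.

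\textbf{Increments.} Put $\omega_1^m=(\omega_1^{m-1},\omega_m)$ and apply the chain rule to $D_m=\mathrm H(\omega_1^m\mid\omega_{\le0},\omega_{>m})$:
\[
D_m=\mathrm H\bigl(\omega_m\mid\omega_{\le0},\omega_{>m}\bigr)+\mathrm H\bigl(\omega_1^{m-1}\mid\omega_{\le0},\omega_{\ge m}\bigr)=\mathrm H\bigl(\omega_m\mid\omega_{\le0},\omega_{>m}\bigr)+D_{m-1}.
\]
Hence
\[
\Delta_m=\mathrm H\bigl(\omega_m\mid\omega_{\le0},\omega_{>m}\bigr)\ge0,
\]
which already gives monotonicity. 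For $m=1$ this formula reads $\Delta_1=D_1$, consistent with $D_0=0$.

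By stationarity of $\widetilde\nu$, shifting indices gives
\[
\Delta_m=\mathrm H\bigl(\omega_0\mid\omega_{\le -m},\omega_{>0}\bigr).
\]
This is the entropy of the symbol $\omega_0$ given the full future together with the past beyond a gap of length $m-1$, namely the coordinates $-m+1,\dots,-1$ are missing. Passing from $m$ to $m+1$ removes the coordinate $\omega_{-m}$ from the conditioning. Conditional entropy does not decrease when the conditioning $\sigma$-algebra is shrunk, so $\Delta_{m+1}\ge\Delta_m$. This is exactly convexity of $(D_m)_{m\ge0}$.

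\textbf{Linear growth.} Let $m_*$ be the first index with $D_{m_*}>0$. Then $D_{m_*-1}=0$, so $\Delta_{m_*}=D_{m_*}$. For $m\ge m_*$,
\[
D_m=D_{m_*-1}+\sum_{j=m_*}^m\Delta_j\ge(m-m_*+1)\Delta_{m_*}=(m-m_*+1)D_{m_*},
\]
which is \eqref{eq:Dmlinear}.

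\textbf{Main obstacle.} The only delicate point is rigor with infinite conditioning $\sigma$-algebras. Because the alphabet is finite, all entropies are finite, and the chain rule and the monotonicity of $\mathrm H(\cdot\mid\mathscr F)$ in $\mathscr F$ hold for arbitrary sub-$\sigma$-algebras. The identity used in the chain-rule step also needs care: the $\sigma$-algebra generated by $\{\omega_{\le0},\omega_{>m},\omega_m\}$ equals the one generated by $\{\omega_{\le0},\omega_{\ge m}\}$, so the second term is indeed $D_{m-1}$. If any doubt arises, I would first prove the identities with finite windows $\omega_{-L}^{0}$ and $\omega_{m+1}^{m+L}$, and then let $L\to\infty$ using the martingale convergence theorem for conditional entropy.
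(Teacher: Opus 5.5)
Your proof is correct and is essentially the paper's argument: the chain rule expresses each increment $D_m-D_{m-1}$ as the conditional entropy of a single symbol given a bilateral exterior, and shrinking that conditioning $\sigma$-algebra gives convexity. The only difference is cosmetic. The paper peels off $\omega_1$, using stationarity to identify the remainder as $D_{m-1}$, and then compares $\mathrm H(\omega_1\mid\omega_{\le0},\omega_{>m})$ across $m$ directly. You peel off $\omega_m$ and use stationarity in the comparison step instead, and your explicit telescoping sum supplies the argument for \eqref{eq:Dmlinear} that the paper leaves as a standard consequence.
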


\begin{proof}
For $m\geq1$, the chain rule gives
\begin{align*}
D_m
&=
\Ent_{\widetilde\nu}
\bigl(\omega_1\mid\omega_{\leq0},\omega_{>m}\bigr)
+
\Ent_{\widetilde\nu}
\bigl(\omega_2^m\mid\omega_{\leq1},\omega_{>m}\bigr)\\
&=
\Ent_{\widetilde\nu}
\bigl(\omega_1\mid\omega_{\leq0},\omega_{>m}\bigr)
+D_{m-1},
\end{align*}
where the second equality follows from stationarity. Hence
\begin{equation}\label{eq:dmincrement}
D_m-D_{m-1}
=
\Ent_{\widetilde\nu}
\bigl(\omega_1\mid\omega_{\leq0},\omega_{>m}\bigr)
\geq0.
\end{equation}
Thus $(D_m)$ is nondecreasing.

Moreover, since
$\sigma(\omega_{\leq0},\omega_{>m+1})
\subseteq
\sigma(\omega_{\leq0},\omega_{>m}),$
conditioning decreases entropy, and therefore
$D_{m+1}-D_m
\geq
D_m-D_{m-1}.$
Thus the successive increments of $(D_m)$ are nondecreasing, so $(D_m)$
is convex.

The second assertion is a standard consequence of the first.
\end{proof}

\subsection{Finite predictive information}
\label{subsec:finiteE}

For $j\geq0$, define
\begin{equation}\label{eq:Ejdef}
E^{(j)}
:=
\I_{\widetilde\nu}
\bigl(
\omega_{\leq0};\omega_{>j}
\bigr),
\end{equation}
and write
$E:=E^{(0)}
=
\I_{\widetilde\nu}
\bigl(
\omega_{\leq0};\omega_{>0}
\bigr).$
Here mutual information between infinite coordinate collections is
understood in the extended sense, as the supremum of the mutual
informations of their finite subblocks. Thus $E$ and $E^{(j)}$ may
a priori be infinite.

We call $E$ the \emph{predictive information} of the process, and
$E^{(j)}$ the predictive information across a gap of length $j$.

For every $j\geq0$,
\begin{equation}\label{eq:Egapcompare}
E^{(j)}
\leq
E
\leq
E^{(j)}
+
\mathrm H_{\widetilde\nu}(\omega_1^j)
\leq
E^{(j)}+2j\log p.
\end{equation}
Indeed, $\omega_{>j}$ is a subcollection of $\omega_{>0}$, so
monotonicity of mutual information gives
$E^{(j)}\leq E.$
On the other hand, since
$\omega_{>0}=(\omega_1^j,\omega_{>j}),$
the chain rule gives
\[
E
=
E^{(j)}
+
\I_{\widetilde\nu}
\bigl(
\omega_{\leq0};
\omega_1^j
\mid
\omega_{>j}
\bigr)
\leq
E^{(j)}
+
\mathrm H_{\widetilde\nu}(\omega_1^j).
\]
Finally,
$\mathrm H_{\widetilde\nu}(\omega_1^j)
\leq
\log|\mathcal A_p^j|
=
2j\log p.$
In particular,
$E^{(j)}<\infty
\Longleftrightarrow
E<\infty$
for every $j\geq0$.

The following proposition relates predictive information to the
bilateral uncertainties $D_m$.

\begin{proposition}\label{prop:finite-predictive}
Suppose that $h>0$, or, equivalently, that $\dim\mu>0$.

\begin{enumerate}[label=\textup{(\roman*)}]
\item
If $E<\infty$, then $D_m>0$ for all sufficiently large $m$.
Therefore, $\mu$ admits a dimension-dropping rational projection.

\item
If $E^{(j)}<\infty$ for some $j\geq0$, then the primitive vector
$a\in\mathbb Z^2\setminus\{0\}$ giving the dimension drop may be
chosen so that
\begin{equation}\label{eq:heightEj}
|a|_\infty
\leq
p^{\,j+\lfloor E^{(j)}/h\rfloor+1}-1.
\end{equation}
\end{enumerate}
\end{proposition}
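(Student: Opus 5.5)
The plan is to bound $D_m$ from below by a linear function of $m$ minus the predictive information, and then invoke Proposition~\ref{prop:local-drop}. The starting point is the standard identity for stationary processes
\[
\mathrm H_{\widetilde\nu}\bigl(\omega_1^m\mid\omega_{\leq0}\bigr)=mh .
\]
This holds by the chain rule together with $\mathrm H_{\widetilde\nu}(\omega_1\mid\omega_{\leq0})=h$, and likewise for the time-reversed process, whose entropy rate is also $h$. Removing the information supplied by the far future gives
\[
D_m=\mathrm H_{\widetilde\nu}\bigl(\omega_1^m\mid\omega_{\leq0}\bigr)-\I_{\widetilde\nu}\bigl(\omega_1^m;\omega_{>m}\mid\omega_{\leq0}\bigr)
= mh-\I_{\widetilde\nu}\bigl(\omega_1^m;\omega_{>m}\mid\omega_{\leq0}\bigr).
\]
So everything reduces to bounding this conditional mutual information in terms of $E$ or $E^{(j)}$.

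For the bound, apply the chain rule to $\I_{\widetilde\nu}(\omega_{\leq m};\omega_{>m})$, splitting $\omega_{\leq m}=(\omega_{\leq0},\omega_1^m)$:
\[
\I_{\widetilde\nu}(\omega_{\leq m};\omega_{>m})=E^{(m)}+\I_{\widetilde\nu}\bigl(\omega_1^m;\omega_{>m}\mid\omega_{\leq0}\bigr).
\]
By stationarity, the left-hand side equals $E$. Hence the conditional term equals $E-E^{(m)}\leq E$, and therefore
\[
D_m\geq mh-E.
\]
For (i), $E<\infty$ and $h>0$ give $D_m>0$ for every $m>E/h$, and Proposition~\ref{prop:local-drop} then produces the rational dimension-dropping projection.

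For (ii), I sharpen the bound on $E$ using \eqref{eq:Egapcompare}, but with $h$ in place of $2\log p$. The chain rule used there gives
\[
E=E^{(j)}+\I_{\widetilde\nu}(\omega_{\leq0};\omega_1^j\mid\omega_{>j})\leq E^{(j)}+\mathrm H_{\widetilde\nu}(\omega_1^j\mid\omega_{>j}),
\]
and the last term equals $jh$ by the reversed-time entropy identity. Consequently
\[
D_m\geq (m-j)h-E^{(j)}>0 \quad\text{for}\quad m:=j+\lfloor E^{(j)}/h\rfloor+1,
\]
and Proposition~\ref{prop:local-drop} yields a primitive $a$ with $|a|_\infty\leq p^m-1$, which is exactly \eqref{eq:heightEj}.

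The main obstacle is rigour with infinite collections of coordinates. The mutual informations are defined as suprema over finite subblocks, and the subtraction $E-E^{(m)}$ is legitimate only once finiteness is known, which (i) assumes and (ii) provides via \eqref{eq:Egapcompare}. I would first prove all the identities for truncated pasts $\omega_{-n}^0$ and truncated futures $\omega_{m+1}^{m+n}$, where the chain rule is elementary. I would then pass to the limit $n\to\infty$ using the martingale convergence of conditional entropies. This gives $\mathrm H(\omega_1^m\mid\omega_{-n}^0,\omega_{m+1}^{m+n})\downarrow D_m$, together with monotone convergence of the finite-block mutual informations to their suprema. I would work with inequalities in the correct direction throughout, so that only finite quantities are ever subtracted.
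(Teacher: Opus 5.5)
Your proposal is correct. Part (i) is exactly the paper's argument: you apply the chain rule to $\I_{\widetilde\nu}(\omega_{\leq m};\omega_{>m})=E$ and combine it with $\mathrm H_{\widetilde\nu}(\omega_1^m\mid\omega_{\leq0})=mh$ to get $D_m\geq mh-E$.

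For (ii) your route differs somewhat from the paper's. The paper never goes through $E$. It discards the last $j$ symbols of the deleted block (phrased there as a contradiction with $D_{j+k}=0$):
\[
D_{j+k}\geq \mathrm H_{\widetilde\nu}\bigl(\omega_1^k\mid\omega_{\leq0},\omega_{>j+k}\bigr)
= kh-\I_{\widetilde\nu}\bigl(\omega_1^k;\omega_{>j+k}\mid\omega_{\leq0}\bigr)
\geq kh-\I_{\widetilde\nu}\bigl(\omega_{\leq k};\omega_{>j+k}\bigr)=kh-E^{(j)}.
\]
This uses only forward conditioning and stationarity.

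You instead keep the single bound $D_m\geq mh-E$ and sharpen \eqref{eq:Egapcompare} to $E\leq E^{(j)}+jh$. That step needs the extra fact that $\mathrm H_{\widetilde\nu}(\omega_1^j\mid\omega_{>j})=jh$, i.e.\ that the entropy rate is invariant under time reversal. This is true and standard: by stationarity, $\mathrm H(\omega_1^j\mid\omega_{j+1}^{j+n})=\mathrm H(\omega_1^{j+n})-\mathrm H(\omega_1^{n})$, and this tends to $jh$.

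Both arguments give the same bound $D_{j+k}\geq kh-E^{(j)}$. Yours derives (i) and (ii) from one inequality and gives the improved gap comparison $E\leq E^{(j)}+jh$ as a byproduct. The paper's is slightly more self-contained, since it avoids time reversal.

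Your plan to check every identity on truncated blocks and pass to the limit by monotonicity and martingale convergence, subtracting only finite quantities, is sound. It is more careful than the paper, which manipulates the infinite-collection mutual informations directly.
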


Recall that a $\sigma$-invariant probability measure
$\nu\in\mathcal P(\mathcal A_p^{\mathbb N})$ is
\emph{quasi-Bernoulli} if there exists $C\geq1$ such that
\begin{equation*}
C^{-1}\nu([U])\nu([V])
\leq
\nu([UV])
\leq
C\nu([U])\nu([V])
\end{equation*}
whenever $U,V$ are finite words with $\nu([UV])>0$. We now deduce
Theorem~\ref{cor:quasi-bernoulli-drop} from
Proposition~\ref{prop:finite-predictive}.

\begin{proof}[Proof of Theorem~\ref{cor:quasi-bernoulli-drop}]
Let $C\geq1$ be the  constant from
\eqref{eq:quasiBernoulli}. For $n,m\geq1$, set
\[
E_{n,m}
:=
\I_{\widetilde\nu}
\bigl(
\omega_{-n+1}^0;\omega_1^m
\bigr).
\]
For $U\in\mathcal A_p^n$ and $V\in\mathcal A_p^m$, stationarity gives
\[
\widetilde\nu
\bigl(
\omega_{-n+1}^0=U,\,
\omega_1^m=V
\bigr)
=
\nu([UV]),
\]
while the corresponding marginal probabilities are
$\nu([U])$ and $\nu([V])$. Therefore
\[
E_{n,m}
=
\sum_{\substack{
U\in\mathcal A_p^n,\;
V\in\mathcal A_p^m\\
\nu([UV])>0
}}
\nu([UV])
\log
\frac{\nu([UV])}
     {\nu([U])\nu([V])}.
\]
By the upper bound in \eqref{eq:quasiBernoulli},
$\frac{\nu([UV])}
     {\nu([U])\nu([V])}
\leq C$
for every term appearing in the sum. Hence
$E_{n,m}
\leq
\sum_{U,V}\nu([UV])\log C
=
\log C.$

As $n,m\to\infty$, the blocks $\omega_{-n+1}^0$ and $\omega_1^m$
exhaust the whole past $\omega_{\leq0}$ and future $\omega_{>0}$,
respectively. By the definition of mutual information for infinite
coordinate collections,
\[
E
=
\sup_{n,m\geq1}E_{n,m}
\leq
\log C
<
\infty.
\]
The conclusion now follows from
Proposition~\ref{prop:finite-predictive}\textup{(i)}.
\end{proof}

\begin{proof}[Proof of Proposition~\ref{prop:finite-predictive}]
We first prove \textup{(i)}. By stationarity,
$\I_{\widetilde\nu}
\bigl(
\omega_{\leq m};\omega_{>m}
\bigr)
=
E.$
Since
$\omega_{\leq m}
=
(\omega_{\leq0},\omega_1^m),$
the chain rule for mutual information gives
\begin{align*}
E
&=
\I_{\widetilde\nu}
\bigl(
\omega_{\leq0};\omega_{>m}
\bigr)
+
\I_{\widetilde\nu}
\bigl(
\omega_1^m;\omega_{>m}
\mid\omega_{\leq0}
\bigr)
\\
&\geq
\I_{\widetilde\nu}
\bigl(
\omega_1^m;\omega_{>m}
\mid\omega_{\leq0}
\bigr).
\end{align*}
On the other hand,
\begin{align*}
\I_{\widetilde\nu}
\bigl(
\omega_1^m;\omega_{>m}
\mid\omega_{\leq0}
\bigr)
&=
\mathrm H_{\widetilde\nu}
\bigl(
\omega_1^m\mid\omega_{\leq0}
\bigr)
-
\mathrm H_{\widetilde\nu}
\bigl(
\omega_1^m\mid\omega_{\leq0},\omega_{>m}
\bigr)
\\
&=
mh-D_m,
\end{align*}
where
$\mathrm H_{\widetilde\nu}
\bigl(
\omega_1^m\mid\omega_{\leq0}
\bigr)
=
mh$
follows from stationarity and the definition of the entropy rate.
Therefore
\begin{equation}\label{eq:DmFromE}
D_m\geq mh-E.
\end{equation}
If $E<\infty$, then $D_m>0$ whenever $m>E/h$. Proposition~\ref{prop:local-drop}
therefore gives a primitive
$a\in\mathbb Z^2\setminus\{0\}$
such that
$\dim(P_a\mu)<\dim\mu.$
This proves \textup{(i)}.

For \textup{(ii)}, suppose that $E^{(j)}<\infty$ for some $j\geq0$, and set
$k
:=
\left\lfloor\frac{E^{(j)}}{h}\right\rfloor+1.$
Then
$kh>E^{(j)}.$
We claim that
$D_{j+k}>0.$
Suppose, to the contrary, that $D_{j+k}=0$. Then
\[
\mathrm H_{\widetilde\nu}
\bigl(
\omega_1^{j+k}
\mid
\omega_{\leq0},\omega_{>j+k}
\bigr)
=
0,
\]
and hence, since $\omega_1^k$ is a function of $\omega_1^{j+k}$,
\[
\mathrm H_{\widetilde\nu}
\bigl(
\omega_1^k
\mid
\omega_{\leq0},\omega_{>j+k}
\bigr)
=
0.
\]
Since
$\mathrm H_{\widetilde\nu}
\bigl(
\omega_1^k\mid\omega_{\leq0}
\bigr)
=
kh,$
we obtain
\begin{align*}
kh
&=
\I_{\widetilde\nu}
\bigl(
\omega_1^k;\omega_{>j+k}
\mid\omega_{\leq0}
\bigr)
\\
&\leq
\I_{\widetilde\nu}
\bigl(
\omega_{\leq k};\omega_{>j+k}
\bigr)
\\
&=
E^{(j)},
\end{align*}
where the last equality follows from stationarity. This contradicts
$kh>E^{(j)}$, proving that
$D_{j+k}>0.$

Applying Proposition~\ref{prop:local-drop} at scale $j+k$, we obtain a
primitive
$a\in\mathbb Z^2\setminus\{0\}$
such that
$\dim(P_a\mu)<\dim\mu$
and
$|a|_\infty
\leq
p^{j+k}-1
=
p^{\,j+\lfloor E^{(j)}/h\rfloor+1}-1.$
This proves \eqref{eq:heightEj} and completes the proof.
\end{proof}

\bibliographystyle{plain}
\bibliography{bib}

\begin{thebibliography}{10}

\bibitem{AlgomHost}
Amir Algom.
\newblock A simultaneous version of {Host}'s equidistribution theorem.
\newblock {\em Trans. Amer. Math. Soc.}, 373(12):8439--8462, 2020.

\bibitem{Algom2021}
Amir Algom.
\newblock Actions of diagonal endomorphisms on conformally invariant measures
  on the $2$-torus.
\newblock {\em Monatsh. Math.}, 195(4):545--564, 2021.

\bibitem{AlgomRodriguezHertzWang2026}
Amir Algom, Federico Rodriguez~Hertz, and Zhiren Wang.
\newblock Smooth projections of self-similar measures.
\newblock {\em arXiv preprint arXiv:2607.15635}, 2026.

\bibitem{algom2024plane}
Amir Algom, Federico Rodriguez~Hertz, and Zhiren Wang.
\newblock Spectral gaps and {F}ourier decay for self-conformal measures on the
  plane.
\newblock {\em Transactions of the American Mathematical Society},
  379(6):3953--3991, 2026.

\bibitem{AlgomShmerkin2024}
Amir Algom and Pablo Shmerkin.
\newblock On the dimension of orthogonal projections of self-similar measures.
\newblock {\em J. Lond. Math. Soc.}, 112(1):e70245, 2025.

\bibitem{BaranyKaenmakiKolossvary2026}
Bal{\'a}zs B{\'a}r{\'a}ny, Antti K{\"a}enm{\"a}ki, and Istv{\'a}n
  Kolossv{\'a}ry.
\newblock Projections of self-affine sets onto lines.
\newblock {\em arXiv preprint arXiv:2607.14740}, 2026.

\bibitem{barany2023scaling}
Bal{\'a}zs B{\'a}r{\'a}ny, Antti K{\"a}enm{\"a}ki, Aleksi Py{\"o}r{\"a}l{\"a},
  and Meng Wu.
\newblock Scaling limits of self-conformal measures.
\newblock {\em arXiv preprint arXiv:2308.11399}, 2023.

\bibitem{BaranySimonSolomyak2023}
Bal{\'a}zs B{\'a}r{\'a}ny, K{\'a}roly Simon, and Boris Solomyak.
\newblock {\em Self-similar and self-affine sets and measures}, volume 276 of
  {\em Mathematical Surveys and Monographs}.
\newblock American Mathematical Society, Providence, RI, 2023.

\bibitem{bishop2013fractal}
Christopher~J. Bishop and Yuval Peres.
\newblock {\em Fractals in probability and analysis}, volume 162 of {\em
  Cambridge Studies in Advanced Mathematics}.
\newblock Cambridge University Press, Cambridge, 2017.

\bibitem{Bruce2019jin}
Catherine Bruce and Xiong Jin.
\newblock Projections of {G}ibbs measures on self-conformal sets.
\newblock {\em Nonlinearity}, 32(2):603--621, 2019.

\bibitem{bruce2022furstenberg}
Catherine Bruce and Xiong Jin.
\newblock Furstenberg sumset conjecture and {M}andelbrot percolations.
\newblock {\em arXiv preprint arXiv:2211.16410}, 2022.

\bibitem{Carbery2009}
Anthony Carbery.
\newblock Large sets with limited tube occupancy.
\newblock {\em J. Lond. Math. Soc. (2)}, 79(2):529--543, 2009.

\bibitem{CarberySoriaVargas2007}
Anthony Carbery, Fernando Soria, and Ana Vargas.
\newblock Localisation and weighted inequalities for spherical {Fourier} means.
\newblock {\em J. Anal. Math.}, 103:133--156, 2007.

\bibitem{Chen2016}
Changhao Chen.
\newblock Distribution of random {Cantor} sets on tubes.
\newblock {\em Ark. Mat.}, 54(1):39--54, 2016.

\bibitem{FalconerJin2014}
Kenneth~J. Falconer and Xiong Jin.
\newblock Exact dimensionality and projections of random self-similar measures
  and sets.
\newblock {\em J. Lond. Math. Soc. (2)}, 90(2):388--412, 2014.

\bibitem{Fan2002measures}
Ai-Hua Fan, Ka-Sing Lau, and Hui Rao.
\newblock Relationships between different dimensions of a measure.
\newblock {\em Monatsh. Math.}, 135(3):191--201, 2002.

\bibitem{Farkas2016}
{\'A}bel Farkas.
\newblock Projections of self-similar sets with no separation condition.
\newblock {\em Israel J. Math.}, 214(1):67--107, 2016.

\bibitem{feng2009dimension}
De-Jun Feng and Huyi Hu.
\newblock Dimension theory of iterated function systems.
\newblock {\em Comm. Pure Appl. Math.}, 62(11):1435--1500, 2009.

\bibitem{FengXie2025}
De-Jun Feng and Yu-Hao Xie.
\newblock Dimensions of orthogonal projections of typical self-affine sets and
  measures.
\newblock {\em arXiv preprint arXiv:2502.04000}, 2025.

\bibitem{furstenberg1967disjointness}
Harry Furstenberg.
\newblock Disjointness in ergodic theory, minimal sets, and a problem in
  diophantine approximation.
\newblock {\em Theory of Computing Systems}, 1(1):1--49, 1967.

\bibitem{Grillenberger1973}
Christian Grillenberger.
\newblock Constructions of strictly ergodic systems. {I}. given entropy.
\newblock {\em Z. Wahrscheinlichkeitstheorie verw. Gebiete}, 25(4):323--334,
  1973.

\bibitem{hochman2014self}
Michael Hochman.
\newblock On self-similar sets with overlaps and inverse theorems for entropy.
\newblock {\em Ann. of Math. (2)}, 180(2):773--822, 2014.

\bibitem{hochman2009local}
Michael Hochman and Pablo Shmerkin.
\newblock Local entropy averages and projections of fractal measures.
\newblock {\em Ann. of Math. (2)}, 175(3):1001--1059, 2012.

\bibitem{HochmanShmerkinNormal}
Michael Hochman and Pablo Shmerkin.
\newblock Equidistribution from fractal measures.
\newblock {\em Invent. Math.}, 202(1):427--479, 2015.

\bibitem{Host1995normal}
Bernard Host.
\newblock Nombres normaux, entropie, translations.
\newblock {\em Israel J. Math.}, 91(1-3):419--428, 1995.

\bibitem{Johnson1992}
Aimee S.~A. Johnson.
\newblock Measures on the circle invariant under multiplication by a
  nonlacunary subsemigroup of the integers.
\newblock {\em Israel Journal of Mathematics}, 77(1-2):211--240, 1992.

\bibitem{jordan2019dimension}
Thomas Jordan and Ariel Rapaport.
\newblock Dimension of ergodic measures projected onto self-similar sets with
  overlap.
\newblock {\em Proceedings of the London Mathematical Society},
  122(2):191--206, 2021.

\bibitem{Lindenstrauss2001}
Elon Lindenstrauss.
\newblock $p$-adic foliation and equidistribution.
\newblock {\em Israel J. Math.}, 122:29--42, 2001.

\bibitem{mattila1999geometry}
Pertti Mattila.
\newblock {\em Geometry of sets and measures in {E}uclidean spaces}, volume~44
  of {\em Cambridge Studies in Advanced Mathematics}.
\newblock Cambridge University Press, Cambridge, 1995.
\newblock Fractals and rectifiability.

\bibitem{MorrisSert2025}
Ian~D. Morris and {\c{C}}a{\u{g}}r{\i} Sert.
\newblock Projections of self-affine fractals.
\newblock {\em Inventiones Mathematicae}, 2026.
\newblock To appear.

\bibitem{NazarovPeresShmerkin2009}
Fedor Nazarov, Yuval Peres, and Pablo Shmerkin.
\newblock Convolutions of {Cantor} measures without resonance.
\newblock {\em Israel J. Math.}, 187:93--116, 2012.

\bibitem{OrnsteinWeiss1975}
Donald~S. Ornstein and Benjamin Weiss.
\newblock Every transformation is bilaterally deterministic.
\newblock {\em Israel Journal of Mathematics}, 21(2--3):154--158, 1975.

\bibitem{Orponen2015}
Tuomas Orponen.
\newblock On the tube occupancy of sets in $\mathbb{R}^d$.
\newblock {\em Int. Math. Res. Not. IMRN}, (19):9815--9831, 2015.

\bibitem{OrponenRutar2026}
Tuomas Orponen and Alex Rutar.
\newblock Visibility problem in the plane.
\newblock {\em arXiv preprint arXiv:2606.06965}, 2026.

\bibitem{PeresShmerkin2009}
Yuval Peres and Pablo Shmerkin.
\newblock Resonance between cantor sets.
\newblock {\em Ergodic Theory Dynam. Systems}, 29(1):201--221, 2009.

\bibitem{Aleksi2024reso}
Aleksi Py\"or\"al\"a.
\newblock The dimension of projections of planar diagonal self-affine measures.
\newblock {\em Ann. Fenn. Math.}, 50(1):59--78, 2025.

\bibitem{Aleksi2025pablo}
A.~Pyörälä, P.~Shmerkin, V.~Suomala, and M.~Wu.
\newblock Covering the {Sierpiński} carpet with tubes.
\newblock {\em Israel Journal of Mathematics}, 265:769--799, 2025.

\bibitem{Rapaport2017}
Ariel Rapaport.
\newblock A self-similar measure with dense rotations, singular projections and
  discrete slices.
\newblock {\em Adv. Math.}, 321:529--546, 2017.

\bibitem{Rudolph1990}
Daniel~J. Rudolph.
\newblock $\times 2$ and $\times 3$ invariant measures and entropy.
\newblock {\em Ergodic Theory Dynam. Systems}, 10(2):395--406, 1990.

\bibitem{ShmerkinSuomala2015}
Pablo Shmerkin and Ville Suomala.
\newblock Sets which are not tube null and intersection properties of random
  measures.
\newblock {\em J. Lond. Math. Soc. (2)}, 91(2):405--422, 2015.

\bibitem{Wu2025Countable}
Meng Wu.
\newblock Projection theorems with countably many exceptions and applications
  to the exact overlaps conjecture.
\newblock {\em arXiv preprint arXiv:2503.21923}, 2025.

\end{thebibliography}

\end{document}